\newtheorem{thm}{Theorem}[section]
\newtheorem{corollary}[thm]{Corollary}
\newtheorem{lemma}[thm]{Lemma}
\newtheorem{lem}[thm]{Lemma}
\newtheorem{proposition}[thm]{Proposition}
\newtheorem{prop}[thm]{Proposition}
\newtheorem{thm-dfn}[thm]{Theorem-Definition}
\newtheorem{prop-dfn}[thm]{Proposition-Definition}
\theoremstyle{definition}
\newtheorem{definition}[thm]{Definition}
\newtheorem{remark}[thm]{Remark}
\newtheorem{example}[thm]{Example}
\numberwithin{equation}{section}
\newcommand{\fg}{{\mathfrak g}}
\newcommand{\ft}{{\mathfrak t}}
\newcommand{\fb}{{\mathfrak b}}
\newcommand{\fC}{{\mathfrak C}}
\newcommand{\fc}{{\mathfrak c}}
\newcommand{\rW}{{\mathrm W}}
\newcommand{\bC}{{\mathbb C}}
\newcommand{\bG}{{\mathbb G}}
\newcommand{\mD}{\mathcal{D}}
\newcommand{\mI}{\mathcal{I}}
\newcommand{\mE}{\mathcal{E}}
\newcommand{\mF}{\mathcal{F}}
\newcommand{\mQ}{\mathcal{Q}}
\newcommand{\mJ}{J}
\newcommand{\mA}{\mathcal{A}}
\newcommand{\mM}{\mathcal{M}}
\newcommand{\mO}{\mathcal{O}}
\newcommand{\mL}{\mathcal{L}}
\newcommand{\mG}{\mathcal{G}}
\newcommand{\mN}{\mathcal{N}}
\newcommand{\mB}{\mathcal{B}}
\newcommand{\sX}{\mathscr{X}}
\newcommand{\sG}{\mathscr{G}}
\newcommand{\sP}{\mathscr{P}}
\newcommand{\sH}{\mathscr{H}}
\newcommand{\sQ}{\mathscr{Q}}
\newcommand{\on}{\operatorname}
\newcommand{\ra}{\rightarrow}
\newcommand{\is}{\simeq}
\newcommand{\Loc}{\on{LocSys}}
\newcommand{\Bun}{\on{Bun}}
\newcommand{\tC}{\widetilde C}
\newcommand{\quash}[1]{}  %%Anything in \quash is ignored
\newcommand{\nc}{\newcommand}
\newcommand{\frakb}{{\mathfrak b}}
\newcommand{\frakg}{{\mathfrak g}}
\newcommand{\frakn}{{\mathfrak n}}
\newcommand{\frakt}{{\mathfrak t}}
\newcommand{\bbA}{{\mathbb A}}
\newcommand{\bbC}{{\mathbb C}}
\newcommand{\bbG}{{\mathbb G}}
\newcommand{\bbV}{{\mathbb V}}
\newcommand{\calC}{{\mathcal C}}
\newcommand{\calF}{{\mathcal F}}
\newcommand{\calG}{{\mathcal G}}
\newcommand{\calH}{{\mathcal H}}
\newcommand{\calI}{{\mathcal I}}
\newcommand{\calO}{{\mathcal O}}
\newcommand{\calV}{{\mathcal V}}
\newcommand{\calZ}{{\mathcal Z}}
\nc{\al}{{\alpha}} \nc{\be}{{\beta}} \nc{\ga}{{\gamma}}
\nc{\ve}{{\varepsilon}} \nc{\Ga}{{\Gamma}} %\nc{\la}{{\lambda}}
\nc{\La}{{\Lambda}}
\nc{\ad }{{\on{ad }}}
\nc{\aff}{{\on{aff}}} \nc{\Aff}{{\mathbf{Aff}}}
\newcommand{\Aut}{{\on{Aut}}}
\nc{\der}{{\on{der}}}
\newcommand{\Der}{{\on{Der}}}
\nc{\diag}{{\on{diag}}}
\newcommand{\End}{{\on{End}}}
\nc{\Fl}{{\calF\ell}}
\nc{\Hg}{{\on{Higgs}}}
\newcommand{\id}{{\on{id}}}
\nc{\Id}{{\on{Id}}}
\nc{\Ind}{{\on{Ind}}}
\newcommand{\Lie}{{\on{Lie}}}
\nc{\Op}{{\on{Op}}}
\newcommand{\Pic}{{\on{Pic}}}
\newcommand{\pr}{{\on{pr}}}
\nc{\res}{{\on{res}}}
\newcommand{\Spec}{{\on{Spec}}}
\nc{\tr}{{\on{tr}}}
\newcommand{\GL}{{\on{GL}}}
\nc{\GSp}{{\on{GSp}}} \nc{\GU}{{\on{GU}}} \nc{\SL}{{\on{SL}}}
\nc{\SU}{{\on{SU}}} \nc{\SO}{{\on{SO}}}
\nc{\nh}{{\Loc_{J^p}(\tau')}}
\nc{\bnh}{{\Loc_{\breve J^p}(\tau')}}
\nc{\bU}{{\overline{U}}} \nc{\IC}{{\on{IC}}}
\begin{document}
\title{Non-abelian Hodge theory for algebraic curves in characteristic $p$}
        \author{Tsao-Hsien Chen, Xinwen Zhu}
        \address{Department of Mathematics, Northwestern University,
        2033 Sheridan Road, Evanston, IL 60208, USA}
        \email{chenth@math.northwestern.edu}
        \address{Department of Mathematics, California Institute of Technology,
        1200 E. California Blvd., Pasadena, CA 91125, USA}
         \email{xzhu@caltech.edu}
\begin{abstract}
Let $G$ be a reductive group over an algebraically closed field of positive characteristic. Let $C$ be a smooth projective curve over $k$. We give a description of the moduli space of flat $G$-bundles in terms of the moduli space of $G$-Higgs bundles over the Frobenius twist $C'$ of $C$. This description can be regarded as the non-abelian Hodge theory for curves in positive characteristic. 
\end{abstract}
\maketitle

\setcounter{tocdepth}{1} \tableofcontents

\section{Introduction}
Let $C$ be a Riemann surface, and $G_c$ be a compact Lie group with $G$ its complexification. Consider the triple $(E,\nabla,\phi)$, where $E$ is a $C^\infty$-principal $G_c$-bundle, $\nabla$ is a $C^\infty$-connection on $C$, and $\phi\in \ad(E)\otimes\mA^{1,0}$ is a $(1,0)$ form on $C$ with values in the adjoint bundle $\ad(E)$. Then the famous Hitchin's equations are
\begin{equation}\label{Hitchin eqn}
\left\{\begin{split} F(\nabla)+[\phi,\phi^*]=0,\\ \nabla''(\phi)=0,
\end{split}
\right.
\end{equation}
where $F(\nabla)$ is the curvature of $\nabla$, $\phi^*$ is the complex conjugation of $\phi$ (with respect to $G_c$), and $\nabla''$ is the $(0,1)$-component of the connection $\nabla$. 

Hitchin obtained his equations by a dimensional reduction from $4D$ to $2D$ of the self-dual Yang-Mills equations, and found that the space of solutions of \eqref{Hitchin eqn} has spectacular geometric properties. Namely, let $( E,\nabla,\phi)$ be a solution. Then $\nabla''$ endows $E\otimes\bC$ with a structure as a holomorphic $G$-bundle, and $\phi$ as a holomorphic $\ad(E)$-valued 1-form, i.e $\phi\in \Gamma(C,\ad(E)\otimes\omega_C)$. Then the pair $(E,\phi)$ form a Higgs bundle. On the other hand, the connection $\tilde{\nabla}=\nabla+(\phi+\phi^*)$ is flat by \eqref{Hitchin eqn}. It is a well-known fact that a flat $C^\infty$-bundle on $C$ is the same as a holomorphic $G$-bundle with a flat connection, i.e., the pair $(E,\tilde\nabla)$ forms a de Rham $G$-local system. Hitchin showed that a Higgs bundle $(E,\phi)$ on $C$ arises as a solution of \eqref{Hitchin eqn} if and only it is (poly)stable with vanishing Chern class. Furthermore, the solution corresponding to $(E,\phi)$ is unique up to gauge transforms. In the other direction, Donaldson proved that every (semi)simple de Rham $G$-local system arises as a (unique up to gauge transforms) of the solution of \eqref{Hitchin eqn}. In this way, we get a bijection between stable $G$-Higgs bundles with vanishing Chern classes and irreducible $G$-local systems on $C$, which is furthermore a homeomorphism of the corresponding moduli spaces. This is what Simpson(\cite{S}) called the non-abelian Hodge theory for $C$\footnote{And what other people called the Simpson correspondence.}. Note that the $(0,1)$-form $\phi^*$ plays a fundamental role as its presence changes the holomorphic structure on $E$.

It is been a while to search a version of this correspondence in characteristic $p$ case (\cite{O,OV}). However, compared with the story over $\bC$, the picture is not very complete. In \cite{OV}, the Simpson correspondence is only established for connections with nilpotent $p$-curvatures.\quash{\footnote{But presumably this is the most interesting case since flat connections of geometric origin have nilpotent $p$-curvatures.}.}

\medskip

In this note, we will establish a full version of the non-abelian Hodge theory for curves. For this purpose, let us first try to write down an analogue of the Hitchin's equations (\ref{Hitchin eqn}) in characteristic $p$. Assume that $G$ and $C$ are defined over an algebraically closed field $k$ of characteristic $p>0$. We assume that $p$ does not divide the order of the Weyl group of $G$.
Let $F_C:C\to C'$ be the relative Frobenius map, where $C'$ is the pullback of $C$ along the absolute Frobenius of $k$. Let $(E,\tilde\nabla)$ be a de Rham $G$-local system on $C$. In characteristic $p$, one can associate to $(E,\tilde\nabla)$ its $p$-curvature, which is an $F$-Higgs field $\Psi(\tilde\nabla)\in \ad(E)\otimes F_C^*\omega_{C'}$. Let $\nabla^{can}$ denote the canonical connection on $F_C^*\omega_{C'}$. Let $\theta\in\ad(E)\otimes\omega_C$.
The analogue of Hitchin's equations \eqref{Hitchin eqn}  in characteristic $p$ we have in mind are
\begin{equation}\label{phe}
\left\{\begin{split} \Psi(\tilde\nabla-\theta)=0,\\ ((\tilde\nabla-\theta)\otimes\nabla^{can})(\Psi(\tilde\nabla))=0,
\end{split}
\right.
\end{equation}

Let us explain the meaning of these equations. First note that there are some formal analogy between \eqref{Hitchin eqn} and \eqref{phe}:  The first equation is about the ($p$-)curvature of a connection and the second equation is about the horizontality of some Lie algebra-valued one-from. The significance of these equations lies in the following observation. If we can find a solution $(E,\tilde\nabla,\theta)$ of \eqref{phe}, then we can endow $E$ with a new flat connection $\nabla=\tilde\nabla-\theta$, whose $p$-curvature vanishes by the first equation of \eqref{phe}. Therefore, by the Cartier decent, $E$ with this new connection is a pullback of a $G$-torsor $E'$ on $C'$. The second equation in \eqref{phe} implies that the $F$-Higgs field $\Psi(\tilde\nabla)$ is horizontal with respect to $\nabla$ and therefore is a pull back of $\phi'\in\ad(E')\otimes\omega_{C'}$, again by the Cartier decent. In other words, a solution of \eqref{phe} allows us to construct a Higgs bundle $(E',\phi')$ on $C'$ from a de Rham local system $(E,\tilde\nabla)$.  Note that the role $\theta$ is similar to the role of $\phi^*$ in \eqref{Hitchin eqn}. 

As is well-known, the $p$-curvature $\Psi(\tilde \nabla)$ is horizontal with respect to the connection $\tilde\nabla$ (cf. \cite[\S5]{K}). Therefore, the second equation in \eqref{phe} reduces to a matrix equation
\begin{equation}\label{2phe}[\Psi(\tilde\nabla),\theta]=0.\end{equation} 
On the other hand, the first equation \eqref{phe} is an ODE of order $p-1$. Indeed, it is well-known that if we choose a coordinate  $z$ on $C$, and let $A\in M_n(k[[z]])$ be a $k[[z]]$-valued $n\times n$ matrix, then the $p$-curvature of the connection $\partial_z+A$ can be represented as the matrix $(\partial_z+A)^{p-1}A$. Therefore, \eqref{phe} in principle should be much simpler than \eqref{Hitchin eqn} as the latter are non-linear PDEs.

Unfortunately, despite of its simple nature, we do not know how to find solutions of \eqref{phe} directly. The work of \cite[\S 4.1]{OV} essentially shows that if $G=\GL_n$, \eqref{phe} has solutions \'{e}tale locally on $C$. The proof is not elementary and is based on the Azumaya property of the sheaf of crystalline operators. It seems that there is some difficulty to generalize this approach to groups other than $\GL_n$. The main problem, as we shall see, is that there is no canonical solution of \eqref{phe}. So it is not clear (to us) how to apply the Tannakian formalism here.  In addition, the methods of \emph{loc. cit.} seems not to give the information of the existence of solutions of $\theta$ over $C$. In fact, as we shall argue, over the whole curve $C$,  one should consider another better-behaved system of equations, which has \eqref{phe} as its local avatar.

 \medskip

Note that \eqref{2phe} says that $\theta\in \Lie\!\ \Aut(E,\Psi(\tilde\nabla))$. The group scheme $\Aut(E,\Psi(\tilde\nabla))$ on $C$ is not well-behaved as the dimension of its fibers might jump. By the work of \cite{DG,N1}, the $F$-Higgs bundle $(E,\Psi(\tilde\nabla))$ (i.e. a Higgs bundle valued in $F^*\omega_{C'}$) defines a smooth commutative group scheme $J_{b^p}$ over $C$, where $b^p$ is the characteristic polynomial of $\Psi(\tilde \nabla)$. This is the so-called regular centralizer of the Higgs field $(E,\Psi(\tilde\nabla))$, which canonically maps to $\Aut(E,\Psi(\tilde\nabla))$. Our first observation is that instead of imposing $\theta\in\Lie \Aut(E,\Psi(\tilde\nabla))$, it is better to ask for $\theta\in\Lie\!\ J_{b^p}$. In other words, we replace the second equation of \eqref{phe} by asking for a section $\theta\in \Lie\!\ J_{b^p}$.
Our second observation is that 
the group scheme $J_{b^p}$ is the Frobenius pullback of a smooth group scheme $J'_{b'}$ on $C'$(cf. Proposition \ref{p-hitchin})\footnote{But $F$-Higgs bundle $(E,\Psi(\tilde\nabla))$ is not the Frobenius pullback!}, and it makes sense to talk about $J_{b^p}$-torsors with connections (cf. \S \ref{A}). In addition, given a $J_{b^p}$-torsor with a flat connection $(P,\nabla_{P})$, there is a ``product" flat connection $\nabla_{P\otimes  E}$ on the $G$-torsor $P\otimes  E:=P\times^{J_{b^p}} E$. Therefore, we may regard $\theta$ as a connection $d+\theta$ on $J_{b^p}$, and the first equation of \eqref{phe} says that the $p$-curvature of the ``product" connection on $E=J_{b^p}\otimes E$ vanishes. Finally, we replace $(J_{b^p}, d+\theta)$ by
 a general $J_{b^p}$-torsor with a connection $(P,\nabla_{P})$ and consider the equation 
\begin{equation}\label{globhe}
\Psi(\nabla_{P\otimes E})=0.
\end{equation}
As before, a solution $(E,\tilde\nabla,P,\nabla_P)$ of \eqref{globhe} defines a Higgs bundle $(E',\phi')$ on $C'$. Namely, since $\Psi(\tilde\nabla)$ is preserved by $J_{b^p}$, $(P\otimes E,\Psi(\tilde\nabla))$ is an $F$-Higgs bundle and \eqref{globhe}  implies that it is a pullback of some $(E',\phi')$. 

Note that such a quadruple $(E,\tilde\nabla,P,\nabla_P)$ is analogous to
a harmonic bundle as introduced by Simpson \cite{S}.
However, there is a fundamental difference. Namely, given $(E,\tilde\nabla)$, there might be more than one $(P,\nabla_P)$ making $(E,\tilde\nabla,P,\nabla_P)$ a solution of \eqref{globhe}. In fact, let $J'_{b'}$ be the smooth group scheme over $C'$ mentioned above such that $J_{b^p}=F^*J'_{b'}$, and let $\sP'_{b'}$ denote the Picard stack of $J'_{b'}$-torsors. Then the set of such $(P,\nabla_P)$, denoted by $\sH_{b'}^{-1}$, form a pseudo $\sP'_{b'}$-torsor\footnote{Recall that a pseudo torsor is either an empty set or a torsor.} by tensoring $(P,\nabla_P)$ with the Frobenius pullback of a $J'_{b'}$-torsor. By considering all possible characteristic polynomials, we have a Picard stack $\sP'$ over the Hitchin base $B'$ for the curve $C'$ and a pseudo $\sP'$-torsor $\sH^{-1}$ over $B'$. It is convenient to consider its inverse, denoted by $\sH$. In the main body of the paper, we will give another definition of 
$\sH$ as an algebraic stack over $B'$.
Our first result is
\begin{thm}\label{intro:torsor}
Under this action,
the stack $\sH$ form a $\sP'$-torsor.
\end{thm}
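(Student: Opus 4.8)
The plan is to verify the three defining properties of a $\sP'$-torsor over $B'$: that the $\sP'$-action is well defined, that it is \emph{simply transitive} on fibers, and that $\sH\to B'$ admits sections fppf-locally on $B'$. Throughout I work over a point $b'\in B'$ and write $J_{b^p}=F_C^*J'_{b'}$ for the Frobenius pullback of the regular centralizer, so that a $J'_{b'}$-torsor $Q'$ on $C'$ produces, via $F_C^*$ and its canonical connection of vanishing $p$-curvature, a $J_{b^p}$-torsor with connection on $C$. The starting observation, already recorded in the discussion of \eqref{globhe}, is the additivity of $p$-curvature along the Picard structure: for the product connection one has $\Psi(\nabla_{P\otimes E})=\Psi(\tilde\nabla)+\Psi(\nabla_P)$, where $\Psi(\nabla_P)$ is the $p$-curvature of $(P,\nabla_P)$ read through $\Lie J_{b^p}\hookrightarrow\ad(E)$. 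This is the analogue of the freshman's dream $(a+b)^p=a^p+b^p$ for the commuting operators $\tilde\nabla$ and $\theta$, and it is exactly what reduces \eqref{globhe} to \eqref{phe}.

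Well-definedness is then immediate: since $F_C^*Q'$ carries the canonical connection, whose $p$-curvature vanishes, tensoring a harmonic bundle by it leaves $\Psi(\nabla_{P\otimes E})$ unchanged, so harmonicity is preserved. For simple transitivity, take two harmonic bundles $(P_1,\nabla_1)$ and $(P_2,\nabla_2)$ over the same $b'$ and form the difference $D=(P_1,\nabla_1)\otimes(P_2,\nabla_2)^{-1}$, a $J_{b^p}$-torsor with connection. Both satisfy $\Psi(\tilde\nabla)+\Psi(\nabla_{P_i})=0$, so $\Psi(\nabla_{P_1})=\Psi(\nabla_{P_2})$; since the $p$-curvature map is additive ($p$-linear) along the Picard structure of the abelian $J_{b^p}$, the difference $D$ has vanishing $p$-curvature. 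By the Cartier-descent formalism for $J_{b^p}=F_C^*J'_{b'}$-torsors with connection set up in \S\ref{A}, $D$ is the Frobenius pullback of a \emph{unique} $J'_{b'}$-torsor $Q'$ on $C'$, equipped with its canonical connection. This $Q'$ is the unique element of $\sP'_{b'}$ carrying $(P_2,\nabla_2)$ to $(P_1,\nabla_1)$, and the same descent identifies the automorphisms of a harmonic bundle (horizontal sections of $J_{b^p}$) with those of $Q'$; hence the action map $\sP'\times_{B'}\sH\to\sH\times_{B'}\sH$ is an equivalence of stacks.

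It remains to prove that $\sH\to B'$ is surjective with sections fppf-locally, which amounts to the solvability of \eqref{phe}. \'Etale-locally on $C$, any two solutions differ by a flat $J_{b^p}$-torsor, i.e. by $F_C^*$ of a $J'_{b'}$-torsor on $C'$, so the sheaf of solutions is an \'etale-local pseudo-torsor under $\sP'_{b'}$. Granting that these local solutions can be produced, the gluing obstruction for a fixed $b'$ is controlled by the $\sP'$-action, and combined with the simple transitivity above this exhibits $\sH$ as a $\sP'$-torsor once nonemptiness is known after an fppf cover of $B'$.

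That last reduction is where the real difficulty sits, and it is the step I expect to be the main obstacle. For $G=\GL_n$ the \'etale-local solvability of \eqref{phe} is exactly the content extracted from the Azumaya property of crystalline differential operators in \cite[\S4.1]{OV}; for general $G$ there is no canonical solution (the introduction notes this fails unless $\Psi(\tilde\nabla)$ is nilpotent), so a direct Tannakian argument is unavailable. I would therefore attempt to deduce the general-$G$ local existence from the $\GL_n$ case through a faithful representation $G\hookrightarrow\GL_n$ together with the structure of the regular centralizer $J_{b^p}$, or alternatively by a deformation and dimension argument, playing the already-established simple transitivity against a comparison of the dimensions of $\sH$ and $\sP'$ over $B'$. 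Controlling this solvability uniformly in $b'$, in particular over the locus where $\Psi(\tilde\nabla)$ degenerates, is the crux of the proof.
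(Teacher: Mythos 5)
Your formal skeleton is correct and matches the paper: the action is well defined because the canonical connection on $F_C^*Q'$ has vanishing $p$-curvature, and simple transitivity follows from the additivity of the $p$-curvature for the commutative group scheme $J_{b^p}$ together with Cartier descent (this is exactly Corollary \ref{commutative Cartier} in the paper, which exhibits $\sH=\Loc_{J^p}(\tau')$ as a pseudo $\sP'$-torsor). But the surjectivity of $\sH\to B'$ --- which you correctly single out as the crux --- is left unproven, and the two strategies you sketch are unlikely to close the gap. Reduction to $\GL_n$ via a faithful representation is precisely the route the introduction flags as problematic: a faithful representation does not carry the regular centralizer $J_{b^p}$ of $\fg$ to that of $\mathfrak{gl}_n$, and since there is no canonical solution of \eqref{phe} unless $\Psi(\tilde\nabla)$ is nilpotent, the Tannakian/functoriality formalism does not apply. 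A dimension count against $\sP'$ can at best show that $\sH$ dominates a component of $B'$, not that every fiber $\sH_{b'}$ is nonempty, which is the actual content needed.

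The paper's argument for surjectivity is entirely different and is worth internalizing. First, $h_{J'}:\Loc_{J^p}\to B'_{J'}$ is shown to be \emph{smooth} by identifying its tangent map with an edge map of the conjugate spectral sequence for the de Rham complex of $\Lie J^p$, which is surjective on a curve (Lemma \ref{smoothness}); so only surjectivity on geometric points of $B'$ is at stake. Then, using the four-term exact sequence of \'etale sheaves
$1\to J'_{b'}\to F_*J_{b^p}\to \Lie J'_{b'}\otimes F_*\calZ_{C}\to \Lie J'_{b'}\otimes\Omega_{C'}\to 1$
(Proposition \ref{appen:4-term}, generalizing the classical sequence for $\bG_m$), the fiber $\sH_{b'}$ is reinterpreted as the stack of splittings of a $J'_{b'}$-gerbe on $C'$ attached to $\tau'(b')$, so nonemptiness is the vanishing of a class in $H^2(C',J'_{b'})$, which moreover lifts to $H^2(C',(J'_{b'})^0)$. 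Finally the paper proves $H^2(C,J^0_b)=0$ for every $b$: over the generic point $\eta$ the group $J^0_\eta$ is an extension of a unipotent group $U$ by a group of multiplicative type, and $U$ is shown to be $K$-split by constructing a $\bG_m$-action (via the cocharacter $2\rho_L$ for the Levi attached to the semisimple part of the Kostant representative) with no nonzero fixed weights on $\Lie U$ and invoking the theory of wound unipotent groups from Conrad--Gabber--Prasad; vanishing then follows from $\dim K\le 1$ and the Leray spectral sequence. None of this cohomological machinery appears in your proposal, so as it stands the proof is incomplete at its decisive step.
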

The essentially point is to show that the map $\sH\to B'$ is surjective.

Now we state the main theorem of the note. Let $\Hg'_G$ denote the moduli space of Higgs bundles on $C'$. According to \cite{N1,N2}, there is a canonical action of $\sP'$ on $\Hg'_G$. 
Let $ \sH\times^{\sP'}\Hg'_{G}$ be the twist of $\Hg'_{G}$ by the 
$\sP'$-torsor $\sH$.

\begin{thm}\label{intro:nah}
Over $B'$, there is a canonical isomorphism
\[\mathfrak C: \sH\times_{}^{\sP'}\Hg'_{G}\to \Loc_{G}.\]
\end{thm}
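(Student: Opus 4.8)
The plan is to build $\mathfrak{C}$ by hand from the ``tensor plus product connection'' construction, check that it descends to the contracted product and lands in $\Loc_G$, and then produce an inverse using the torsor property of $\sH$ from Theorem~\ref{intro:torsor} together with Cartier descent. Fix a $B'$-point $b'$ and recall that $J_{b^p}=F_C^*J'_{b'}$, where $b^p$ is the Frobenius pullback of $b'$. Given a representative $((P,\nabla_P),(E',\phi'))$ of a point of $\sH\times^{\sP'}\Hg'_{G}$ over $b'$, I would form the Frobenius pullback $(F_C^*E',F_C^*\phi')$, an $F$-Higgs bundle on $C$ carrying its canonical flat connection $\nabla^{\mathrm{can}}$ of vanishing $p$-curvature. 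Using the tautological map $J_{b^p}\to\Aut(F_C^*E',F_C^*\phi')$ (the Frobenius pullback of $J'_{b'}\to\Aut(E',\phi')$), I set $\mathcal E=P\times^{J_{b^p}}F_C^*E'$ and equip it with the product connection $\nabla_{\mathcal E}$ assembled from $\nabla_P$ and $\nabla^{\mathrm{can}}$ as in \S\ref{A}. Independence of the representative is immediate: replacing $(P,\nabla_P)$ by $F_C^*Q'\otimes(P,\nabla_P)$ and $(E',\phi')$ by $(Q')^{-1}\cdot(E',\phi')$ alters $\mathcal E$ only by the canonically trivial flat bundle $F_C^*(Q'\otimes(Q')^{-1})$. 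Flatness of $\nabla_{\mathcal E}$ follows from flatness of both factors and the fact that $J_{b^p}$ centralizes the Higgs field, so $\mathfrak{C}$ lands in $\Loc_G$ and commutes with the projections to $B'$.

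Next I would set up equivariance and the inverse. Both sides carry $\sP'$-actions over $B'$: on the source through the $\Hg'_{G}$-factor (well defined since $\sP'$ is a Picard stack), and on $\Loc_G$ by tensoring with the Frobenius pullback of a $J'$-torsor endowed with its canonical flat connection; by construction $\mathfrak{C}$ is $\sP'$-equivariant. To invert it, given $(\mathcal E,\nabla)\in\Loc_G$ over $b'$ I would use Theorem~\ref{intro:torsor} to pick a harmonic bundle $(P_0,\nabla_{P_0})\in\sH_{b'}$ (the fibre is nonempty because $\sH$ is a $\sP'$-torsor), form the flat $G$-bundle $(P_0^{-1}\otimes\mathcal E,\ \nabla-\nabla_{P_0})$ using the inverse $J_{b^p}$-torsor $P_0^{-1}$, and observe that the harmonic equation \eqref{globhe} forces its $p$-curvature to vanish. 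Cartier descent then yields a $G$-bundle $E'$ on $C'$, while $\Psi(\nabla)$, being preserved by $J_{b^p}$, descends to a Higgs field $\phi'$; the assignment $(\mathcal E,\nabla)\mapsto[(P_0,\nabla_{P_0}),(E',\phi')]$ is independent of the choice of $P_0$ exactly because two choices differ by an element of $\sP'$, which is absorbed in the contracted product.

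The residual work is to verify that the two composites are the identity and, above all, to justify the vanishing-of-$p$-curvature step uniformly over $B'$. The harmonic condition is imposed relative to a fixed (Kostant) $F$-Higgs bundle, so one must know that solving \eqref{globhe} for that base bundle solves it for every $(\mathcal E,\Psi(\nabla))$ sharing the characteristic data $b^p$; this is the abelianization input, namely that over the locus where $\Psi(\nabla)$ is regular the difference of two $G$-bundles with equal $p$-curvature characteristic polynomial is a $J_{b^p}$-torsor, so that tensoring by $P_0^{-1}$ is precisely the operation cancelling the $p$-curvature. I expect the \emph{main obstacle} to be controlling the non-regular locus, where the Hitchin fibres cease to be torsors. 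There I would argue that $\mathfrak{C}$ is an isomorphism over the dense open regular-semisimple locus of $B'$, where both sides are $\sP'$-torsors and any $\sP'$-equivariant morphism of torsors is automatically invertible, and then extend the inverse across the complement by a flatness and normality argument, using properness of the Hitchin map and smoothness of $J'$. Equivalently, after an fppf base change trivializing the torsor $\sH$, the theorem reduces to the single statement that twisting by one fixed harmonic bundle and applying Frobenius pullback defines a $\sP'$-equivariant isomorphism $\Hg'_{G}\cong\Loc_G$ over $B'$, which is the assertion one ultimately has to establish.
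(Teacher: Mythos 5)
Your forward construction of $\fC$ coincides with the paper's (Theorem-Definition \ref{twist}): push a harmonic bundle out along $F^*a_{E',\phi'}:J_{b^p}\to\Aut(F^*E')$ and take the product connection; the descent to the contracted product and the computation $h_p=b'$ are as in Lemma \ref{J to G}. The gap is in the inverse. You correctly isolate the difficulty --- one must cancel the $p$-curvature of an \emph{arbitrary} $(\mathcal E,\nabla)$ by tensoring with a harmonic bundle, and your justification of this only applies when $\Psi(\nabla)$ is regular --- but the proposed repair (prove the statement over the regular-semisimple locus of $B'$ and extend by ``flatness, normality and properness of the Hitchin map'') does not work. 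The non-regular objects do not sit over a closed subset of $B'$ that one could excise: over a non-generic $b'$ the fibers $\Hg'_{b'}$ and $\Loc_{G,b'}$ contain both regular and non-regular points, the Hitchin map on the full stack is not proper, and an isomorphism of these highly non-separated stacks over a dense open of $B'$ has no reason to extend to all of $B'$. So the extension step is not merely unproven; it is the wrong mechanism.

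The paper's resolution is pointwise and requires no regularity. By Proposition \ref{J} the isomorphism $\chi^*J|_{\fg^{reg}}\simeq I|_{\fg^{reg}}$ prolongs to a morphism $a:\chi^*J\to I$ over all of $\fg$, so $a_{E,\Psi}:J_{b^p}\to\Aut(E)$ exists for \emph{every} $F$-Higgs field, and the tautological section satisfies $da_x(\tau(x))=x$ for every $x\in\fg$, not just regular $x$ (Lemma \ref{taut:prolong}, proved by passing from $\fg^{reg}$ to $\fg$ in the universal situation). Consequently, if $(P,\nabla_P)$ has $p$-curvature $-\tau(b^p)$, i.e.\ lies in the inverse torsor $\sH^{-1}=\Loc_{J^p}(-\tau')$ furnished by Corollary \ref{commutative Cartier}, then locally the $p$-curvature of $(a_{E,\Psi})_*P\otimes E$ equals $da_{E,\Psi}(\Psi_P)+\Psi=-\Psi+\Psi=0$, and $\Psi$ remains horizontal for the new connection because it commutes with the image of $\Lie J_{b^p}$ and is horizontal for $\nabla$. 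This gives a direct morphism $\fC^{-1}:\sH^{-1}\times^{\sP'}\Loc_G\to\Hg'_G$ defined over all of $B'$, with no generic-point argument and no appeal to Theorem \ref{intro:torsor} beyond the formal inversion of the torsor. To complete your proof you would need to import Lemma \ref{taut:prolong} (equivalently, the Donagi--Gaitsgory prolongation of the regular centralizer together with the identity $da_x(\tau(x))=x$ on all of $\fg$); without it your inverse is only constructed over the regular locus.
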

Roughly speaking, the map $\mathfrak C$ is inverse to the construction of a Higgs bundle from a solution of  \eqref{globhe} as mentioned above.

\begin{corollary}\label{et loc isom}
There is an \'{e}tale cover $U$ of $B'$ such that
$\Loc_{ G}\times_{B'}U$ is isomorphic to $\Hg'_{ G}\times_{B'}U$. 
\end{corollary}
\begin{proof}
The Picard stack $\sP'$ is smooth over $B'$, and therefore $\sH$ can be trivialized \'{e}tale locally on $B'$. 
\end{proof}
\begin{remark}
In the case
$G=\GL_n$, this corollary is one of the main theorems of \cite{G}, which extends a result of \cite{BB} that establishes the above isomorphism away from the discriminant locus of $B'$. However, the isomorphism $\fC$ was not obtained in \emph{loc. cit.}.
\quash{
(ii) In fact, $\sH$ can be trivialized along an ``Artin-Schneier" type cover $B'\to B'$. See \S \ref{trivialization} for details.}
\end{remark}

Theorem \ref{intro:nah} and Corollary \ref{et loc isom} can be regarded as a non-abelian Hodge theory in characteristic $p$ as it says that $\Hg'_G$ and $\Loc_G$ are isomorphic \'etale locally over $B'$. Let us discuss some new features. First, our theorem is stated for \emph{all} $G$-Higgs fields and  \emph{all} flat $G$-bundles, while over $\bC$ such a correspondence could only be possible for (poly)stable Higgs bundles with vanishing Chern classes and (semi)simple flat $G$-local systems. Second, in characteristic $p$, what exists is the isomorphism $\fC$ rather than a direct isomorphism between $\Hg'_{G}$ and $\Loc_{G}$. This is reminiscent of the transcendental nature of the Simpson correspondence over $\bC$. Third, as the isomorphism $\mathfrak C$ is algebraic, one can compare directly between the tangent spaces of $\Hg'_{G}$ and $\Loc_{G}$. In particular, one can obtain the comparison between Higgs cohomology and de Rham cohomology as a consequence of the global isomorphism $\fC$. 

The relation between our work and the construction of 
\cite{OV} (in the curve case) is as follows. The construction of \cite{OV} amounts to the restriction of $\fC$ to $0\in B'$. Namely, upon a choice of a lifting of $C'$ to $W_2(k)$, $\sH|_0$ admits a canonical trivialization (see \S \ref{triv}), and thus $\fC$ induces an isomorphism between the moduli of nilpotent $G$-Higgs bundles over $C'$ and the moduli of flat $G$-bundles with nilpotent $p$-curvatures. 
We remark that it will be clear from our construction that if the group $G$ is bigger than $\on{SL}_2$ there exist trivializations of $\sH|_0$ not induced from the lifting of $C'$ to $W_2(k)$ (See Remark \ref{other lifting}).

We also mention that the results in this note will play a fundamental role in our subsequent paper \cite{CZ} to establish the (generic) geometric Langlands duality in prime characteristic. The main step is to identify the $\sP'$-torsor $\sH$ with another $\sP'$-torsor over $B'$ constructed from the Langlands dual group. We refer \cite{CZ} for details.

Finally,\quash{we remark it is not very difficult to generalize our results to the parabolic version. But} it will be very interesting to find a generalization of these results to higher dimensional varieties. We just remark that Equation \eqref{phe} makes sense for higher dimensional varieties. But solutions should only exist for those varieties that can be lifted to $W_2(k)$ .

\medskip

Let us briefly summarize the following sections. We will review the theory of Hitchin fibrations in \S \ref{Hf}, mostly following \cite{N1,N2}. Readers familiar with this theory can skip most of it and go to \S \ref{tau and c} directly. Our main theorem is proved in \S \ref{pHit}. We construct the $p$-Hitchin map in \S \ref{p-Hitchin map}, and then prove Theorem \ref{intro:torsor} in \S \ref{proof} by some cohomological argument. We briefly discuss the trivialization of the $\sP'_0$-torsor $\sH|_0$ is in \S \ref{triv}. We establish our main theorem in \S \ref{NC}. The proof is rather formal, after we develop some theories in
Appendix \ref{A}, which might be of independent interest. More precisely, we first develop the theory of de Rham $\mG$-local systems for a non-constant group scheme $\mG$ over $X$. The main observation is that the only additional input is a flat connection on $\mG$ itself compatible with the group structure. Next, we discuss the $p$-curvature of such a de Rham $\mG$-local system. We then develop the notion of the scheme of horizontal sections of a $\mD_X$-scheme, which is the right adjoint of the pullback of $X'$-schemes along the relative Frobenius. This is analogous to the same named notion in characteristic zero case developed in (\cite[\S 2.6]{BD}).  We finish the note with a construction for every smooth commutative group scheme $\calG'$ over $X'$, a $4$-term exact sequence of \'{e}tale sheaves on $X'$, which for $\mG'=\bG_m$ specializes to a well-known exact sequence as in \cite[\S 2.1]{I}.

\medskip

We introduce the notations used throughout the note. We fix a smooth projective curve $C$ of genus at least two\footnote{This assumption should not be essential. We impose it to avoid the DG structure on moduli spaces.} over an algebraically closed field $k$ of characteristic $p>0$. Let $\omega=\omega_C$ denote the canonical line bundle of $C$. 

Let $S$ be a
Noetherian scheme and $\sX\ra S$ be an algebraic stack over $S$. If
$p\calO_S=0$, we denote by  $Fr_S:S\ra S$ be the absolute Frobenius
map of $S$. We have the following commutative diagram
$$\xymatrix{\sX\ar[r]^{F_{\sX/S}}\ar[dr]&\sX^{(S)}\ar^{\pi_{\sX/S}}[r]\ar[d]&\sX\ar[d]\\
&S\ar[r]^{Fr_S}&S}$$ where the square is Cartesian. We call
$\sX^{(S)}$ the Frobenius twist of $\sX$ along $S$, and
$F_{\sX/S}:\sX\ra\sX^{(S)}$ the relative Frobenius morphism. If the
base scheme $S$ is clear, $\sX^{(S)}$ is also denoted by $\sX'$ for
simplicity and $F_{\sX/S}$ is denoted by $F_{\sX}$ or $F$.

Let $\mG$ be a smooth
affine group scheme over $X$, and $E$ be a $\mG$-torsor on $X$. We
denote by $\Aut (E)=E\times ^{\mG}\mG$ the adjoint torsor and
$\ad(E)$ the adjoint bundle.

Let $G$ be a reductive algebraic group over $k$ of rank $l$. We denote by
$\fg$ the Lie algebra of $G$. We assume that $p$ does not divide the order of the Weyl group $\rW$ of $G$.

\subsection*{Acknowledgements}
T-H. Chen would like to thank Roman Bezrukavnikov for many helpful discussions. We thank M. Groechenig for pointing out a mistake in an earlier version of the note.
T-H. Chen is supported by NSF under the agreement No.DMS-1128155. X. Zhu is partially supported by NSF grant DMS-1001280/1313894 and DMS-1303296/1535464 and AMS Centennial Fellowship.

\section{Recollections of Hitchin fibrations} \label{Hf}
In this section, we review some basic geometric facts of Hitchin
fibrations, mostly following \cite{N1,N2}. Readers familiar with this theory can directly jump to the next section, probably except \S \ref{tau and c}.

\subsection{The Hitchin fibration}
Let $k[\fg]$ and $k[\ft]$ be the algebra of polynomial function on
$\fg$ and $\ft$. By Chevalley's theorem, we have an isomorphism
$k[\fg]^{G}\is k[\ft]^{\rW}$. Moreover, $k[\ft]^\rW$ is isomorphic to
a polynomial ring of $l$ variables $u_1,\ldots,u_l$ and each $u_i$ is
homogeneous in degree $e_i$. Let $\fc=\on{Spec}(k[\ft]^W)$. Let  $$\chi:\fg\ra\fc$$ be the map induced by $k[\fc]\is
k[\fg]^G\hookrightarrow k[\fg]$.
This is $G\times\bG_m$-equivariant
map where $G$ acts trivially on $\fc$, and $\bG_m$ acts on $\fc$ through the gradings on $k[\ft]^{\rW}$. Let $\mL$ be an invertible
sheaf on $C$ and $\mL^\times$ be the corresponding
$\bG_m$-torsor. Let
$\fg_{\mL}=\fg\times^{\bG_{m}}\mL^\times$ and
$\fc_{\mL}=\fc\times^{\bG_m}\mL^\times$ be  the $\bG_m$-twist of $\fg$
and $\fc$ with respect to the natural $\bG_m$-action.

Let $\on{Higgs}_{G,\mL}=\on{Sect}(C,[\fg_{\mL}/G])$ be the stack of
section of $[\fg_{\mL}/G]$ over $C$, i.e., for each $k$-scheme $S$
the groupoid  $\on{Higgs}_{G,\mL}(S)$ consists of maps over $C$:
\[h_{E,\phi}:C\times S\ra[\fg_{\mL}/G].\]
Equivalently, $\on{Higgs}_{G,\mL}(S)$ consists of a pair $(E,\phi)$ (called a Higgs bundle), where
$E$ is a $G$-torsor over $C\times S$ and $\phi$ is an element in
$\Gamma(C\times S,\ad (E)\otimes \mL)$. If the group $G$ is clear
from the content, we simply write $\on{Higgs}_{\mL}$ for
$\on{Higgs}_{G,\mL}$.

Let $B_{\mL}=\on{Sect}(C,\fc_{\mL})$ be the scheme of
sections of $\fc_{\mL}$ over $C$, i.e., for each $k$-scheme $S$,
$B_{\mL}(S)$ is the set of sections over $C$
$$b:C\times S\ra\fc_{\mL}.$$
This is called the Hitchin base of $G$.

The natural $G$-invariant projection $\chi:\fg\ra\fc$  induces a map
$$[\chi_{\mL}]:[\fg_{\mL}/G]\ra\fc_{\mL},$$
which in turn induces a natural map
$$h_{\mL}:\on{Higgs}_{\mL}=\on{Sect}(C,[\fg_{\mL}/G])\ra\on{Sect}(C,\fc_{\mL})=B_{\mL}.$$
We call $h_{\mL}:\on{Higgs}_{\mL}\ra B_{\mL}$ the Hitchin map
associated to $\mL$.
For any
$b\in B_\mL(S)$ we denote by $\on{Higgs}_{\mL,b}$ the fiber product
$S\times_{B_\mL}\on{Higgs}_{\mL}$.

We are mostly interested in the case $\mL=\omega$. For
simplicity, from now on we denote $B=B_{\omega}$,
$\on{Higgs}=\on{Higgs}_{\omega}$, $h=h_{\omega}$, and $\Hg_b=\Hg_{\omega_C,b}$, etc. We sometimes also write
$\on{Higgs}_G$ for $\on{Higgs}$ to emphasize the group $G$. 

We fix a
square root $\kappa=\omega^{1/2}$ (called a theta characteristic of
$C$). Recall that in this case, there is a section
$\epsilon_{\kappa}:B\ra\on{Higgs}$ of $h:\Hg\to B$, induced by the Kostant section $kos:\fc\ra\fg$. Sometimes, we also call $\epsilon_{\kappa}$ the Kostant section of the Hitchin fibration, and denote it by $\kappa$ for simplicity.

\subsection{Symmetries of Hitchin fibration}\label{symmetry of Hitchin}
Consider the group scheme $I$ over $\fg$ consisting of pairs
$$I=\{(g,x)\in G\times\fg\mid \on{Ad}_g(x)=x\}.$$
The group scheme $I$ is not flat, but when restricted to the open subset of regular elements $\fg^{reg}$, it is smooth.
We define $J=kos^{*}I$, This is called the universal centralizer group scheme of
$\fg$, which is a smooth commutative group scheme over $\fc$.
The following proposition is proved in \cite{DG, N1} 
\begin{proposition}\label{J}
There is a canonical isomorphism of group schemes
$\chi^{*}J|_{\fg^{reg}}\is I|_{\fg^{reg}}$, which extends to a
morphism of group schemes $a:\chi^{*}J\ra I\subset G\times\fg$.
\end{proposition}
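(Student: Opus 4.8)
The plan is to produce the map $a$ in three stages: first construct the isomorphism fibrewise, then globalize it over $\fg^{reg}$ by faithfully flat descent along a transporter torsor, and finally extend it across the irregular locus by a normality argument. The single fact that makes everything work is that for $x\in\fg^{reg}$ the centralizer $I_x=Z_G(x)$ is \emph{abelian} (smooth of dimension $l$); this is exactly what renders the conjugation map below canonical. All of these inputs (smoothness of $I$ over $\fg^{reg}$, the Kostant section meeting each regular orbit in one point, abelianness of regular centralizers) are available under the hypothesis $p\nmid|\rW|$.

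Fibrewise, for $x\in\fg^{reg}$ we have $(\chi^*J)_x=J_{\chi(x)}=I_{kos(\chi(x))}=Z_G(kos(\chi(x)))$. Since $x$ and $kos(\chi(x))$ lie in the same fibre of $\chi$ and are both regular, they are $G$-conjugate, so there is $g\in G$ with $\on{Ad}_g(kos(\chi(x)))=x$, and $h\mapsto ghg^{-1}$ gives an isomorphism $Z_G(kos(\chi(x)))\to Z_G(x)$. The element $g$ is only determined up to right multiplication by $Z_G(kos(\chi(x)))$, but the map is independent of this choice by abelianness: if $g'=gz$ with $z\in Z_G(kos(\chi(x)))$ then $g'h g'^{-1}=g(zhz^{-1})g^{-1}=ghg^{-1}$. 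This is the desired bijection $a_x$ on fibres.

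To globalize over $\fg^{reg}$, I would introduce the transporter $T=\{(g,x)\in G\times\fg^{reg}:\on{Ad}_g(kos(\chi(x)))=x\}$, whose projection $T\to\fg^{reg}$ is smooth and surjective because $kos$ meets every regular orbit; each fibre $T_x$ is a left $I_x$-torsor and a right $J_{\chi(x)}$-torsor. Over $T$ the tautological $g$ defines a morphism of group schemes $\chi^*J|_T\to I|_T$, $h\mapsto ghg^{-1}$. To descend it along the faithfully flat $T\to\fg^{reg}$, one checks the two pullbacks to $T\times_{\fg^{reg}}T$ agree: the two universal sections differ by right multiplication by a section $j$ of $\chi^*J$, and $(gj)h(gj)^{-1}=ghg^{-1}$ since $j,h\in J_{\chi(x)}$ is abelian. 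Hence $a$ descends to a morphism $a:\chi^*J|_{\fg^{reg}}\to I|_{\fg^{reg}}$; the analogous map $h'\mapsto g^{-1}h'g$ is a two-sided inverse, so $a$ is an isomorphism over $\fg^{reg}$.

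For the extension to all of $\fg$, note that $\chi^*J$ is smooth over $\fg$ with connected fibres, hence integral and normal, while $G\times\fg$ is affine. The irregular locus $\fg\setminus\fg^{reg}$ has codimension at least $2$ in $\fg$ (in fact $\geq 3$), so its preimage in $\chi^*J$ also has codimension $\geq 2$, the structure map being smooth. Thus the regular functions defining $a$ on $\chi^*J|_{\fg^{reg}}$ extend across this codimension $\geq 2$ subset by normality, giving $a:\chi^*J\to G\times\fg$; the conditions that $a$ factor through the closed subscheme $I$ and respect the group structure are closed, hold on the dense open $\chi^*J|_{\fg^{reg}}$, and therefore hold on all of the reduced $\chi^*J$. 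I expect the main obstacle to be precisely this last passage across the irregular locus: one genuinely needs normality of $\chi^*J$ together with the codimension bound on $\fg\setminus\fg^{reg}$ for the Hartogs-type extension, whereas the fibrewise construction and the descent over $\fg^{reg}$ are formal once abelianness of regular centralizers is in hand.
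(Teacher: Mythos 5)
The paper does not prove this proposition itself but cites \cite{DG,N1}; your argument is essentially the proof given there (conjugation by the transporter over $\fg^{reg}$, made canonical and descendable by commutativity of the regular centralizers, followed by a Hartogs-type extension across the codimension $\geq 2$ irregular locus using normality of $\chi^*J$ and the closedness of the conditions of landing in $I$ and being a homomorphism). One small correction: the fibres of $J$ need \emph{not} be connected (e.g.\ the centralizer of a regular nilpotent in $\SL_2$ is $\mu_2\times\bG_a$, and the paper itself later works with the neutral component $(J'_{b'})^0$ precisely for this reason), so $\chi^*J$ is in general neither integral nor connected; this is harmless, since normality already follows from smoothness of $\chi^*J$ over the smooth scheme $\fg$, and the extension argument applies component by component.
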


All the above constructions can be twisted. Namely, there is a
$\bG_m$-action on $I$ given by $t\cdot(g,x)=(g,tx)$. It induced a 
$\bG_m$-action on $J=kos^*I$, and the natural morphisms $J
\ra\fc$ and $I\ra \fg$ are $\bG_m$-equivariant. Therefore 
we can twist everything by a $\bG_m$-torsor $\mL^\times$ and get 
$J_{\mL}\ra\fc_{\mL}$, $I_{\mL}\ra\fg_{\mL}$, where
$J_{\mL}=J\times^{\bG_m}\mL^\times$ and
$I_{\mL}=I\times^{\bG_m}\mL^\times$. 
Moreover, there is a $G$-action on $I$
given by $h\cdot(g,x)=(hgh^{-1},hxh^{-1})$. The group scheme $I_{\mL}\to \fg_{\mL}$ is $G$-equivariant, and hence descends to a group scheme $[I_{\mL}]$ over
$[\fg_{\mL}/G]$. 

There is a natural action of the Picard stack $\sP_\mL$ of 
$J_\mL$-torsors on $\on{Higgs}_{\mL}$. We recall its definition.
Let $b:S\ra B_{\mL}$ be an $S$-point of $B_{\mL}$, corresponding to a map
$b:C\times S\ra\fc_{\mL}$. Pulling back $J_\mL\ra\fc_{\mL}$
along this map defines a smooth group scheme $J_b=b^{*}J_\mL$ over $C\times
S$. Let $(E,\phi)\in\on{Higgs}_{\mL,b}$, corresponding to a map $h_{E,\phi}:C\times S\ra [\fg_{\mL}/G]$. Observe that the morphism $\chi^*J\to I$ in
Proposition \ref{J} induces $[\chi_{\mL}]^{*}J_\mL\ra [I_\mL]$ of group
schemes over $[\fg_{\mL}/G]$.
Pulling back to $C\times S$ by $h_{E,\phi}$, we get a map
\begin{equation}\label{actEphi}
a_{E,\phi}:J_b\ra h_{E,\phi}^{*}[I_\mL]=\Aut(E,\phi)\subset \Aut(E).
\end{equation}
Therefore, we can twist
$(E,\phi)\in\on{Higgs}_{\mL,b}$ by a $J_b$-torsor.  This defines the promised action
of $\sP_{\mL}$ on $\on{Higgs}_{\mL}$ over $B_{\mL}$. 

\subsection{The tautological section $\tau:\fc\to \Lie\!\ J$}\label{tau and c}
Recall that by
Proposition \ref{J}, there is a canonical isomorphism
$\chi^*J|_{\fg^{reg}}\simeq I|_{\fg^{reg}}$. The sheaf of Lie
algebras $\Lie\!\ (I|_{\fg^{reg}})\subset \fg^{reg}\times \fg$ admits a
tautological section given by $x\mapsto x\in\Lie\!\ I_x$ for $x\in
\fg^{reg}$. Clearly, this section descends to give a tautological
section $\tau: \fc\to \Lie\!\ J$. We have the following property of $\tau$.

\begin{lem}\label{taut:prolong}
Let $x\in \fg$, and $a_x: J_{\chi(x)}\to I_x\subset G$ be the homomorphism as in Proposition \ref{J} (1). Then $da_x(\tau\chi(x))=x$.
\end{lem}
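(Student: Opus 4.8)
The plan is to verify the identity first on the regular locus, where it is essentially tautological, and then extend to all of $\fg$ by a density argument.

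First I would unwind the definitions. The relative Lie algebra $\Lie I$ of the centralizer group scheme $I\to\fg$ is the closed subscheme $\{(x,y)\in\fg\times\fg : [x,y]=0\}$, so its fiber over $x$ is $\Lie I_x=\fz_\fg(x)$. Since $[x,x]=0$ for every $x$, the assignment $x\mapsto x$ defines a global section $\sigma:\fg\to\Lie I$, defined on all of $\fg$ and not merely on $\fg^{reg}$; this is the tautological section whose restriction to $\fg^{reg}$ descends to $\tau:\fc\to\Lie J$ via the isomorphism $\chi^*J|_{\fg^{reg}}\simeq I|_{\fg^{reg}}$ of Proposition \ref{J}. (I read $\tau(x)$ in the statement as $\tau(\chi(x))\in\Lie J_{\chi(x)}$.) For regular $x$ the claim is then immediate: $a_x:J_{\chi(x)}\to I_x$ is an isomorphism by Proposition \ref{J}, and by the very construction of $\tau$ its differential $da_x:\Lie J_{\chi(x)}\to\Lie I_x$ carries $\tau(\chi(x))$ to $\sigma(x)=x$. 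Hence $da_x(\tau(\chi(x)))=x$ holds identically on $\fg^{reg}$.

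Second I would upgrade this pointwise equality on $\fg^{reg}$ to an equality of sections over all of $\fg$. The homomorphism $a:\chi^*J\to I$ of Proposition \ref{J} has a differential $da:\Lie(\chi^*J)=\chi^*\Lie J\to\Lie I$, a morphism of coherent sheaves on $\fg$ (using that $\Lie J$ is locally free on $\fc$ and that formation of the relative Lie algebra commutes with base change); composing with $\chi^*\tau$ yields a section $da(\chi^*\tau):\fg\to\Lie I$. Both $da(\chi^*\tau)$ and $\sigma$ are thus $\fg$-morphisms into $\Lie I\subset\fg\times\fg$, and by the previous paragraph they agree on $\fg^{reg}$. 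Since $\fg$ is integral, $\fg^{reg}$ is open and dense, hence schematically dense, and $\Lie I$ is separated over $\fg$; therefore two $\fg$-morphisms to $\Lie I$ agreeing on $\fg^{reg}$ coincide. This gives $da(\chi^*\tau)=\sigma$ on all of $\fg$, i.e. $da_x(\tau(\chi(x)))=x$ for every $x\in\fg$.

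The only point requiring genuine care is the second step: one must know that $x\mapsto da_x(\tau(\chi(x)))$ is an honest morphism as $x$ varies, so that density applies, even though $a_x$ is no longer an isomorphism — indeed its differential no longer has constant rank — off the regular locus. This is exactly what functoriality of $\Lie$ provides: $da$ is the differential of the single group-scheme homomorphism $a$ over $\fg$ and is therefore a morphism of sheaves on all of $\fg$, with no regularity hypothesis. Once this is in place the extension is purely formal, and no explicit computation with centralizers $\fz_\fg(x)$ at non-regular $x$ is needed.
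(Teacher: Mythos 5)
Your proposal is correct and follows essentially the same route as the paper: the paper's proof also passes to the universal situation, observes that $da\circ(\chi^*\tau):\fg\to\fg\times\fg$ agrees with the diagonal (equivalently, with the tautological section of $\Lie I$) on $\fg^{reg}$ by the very construction of $\tau$, and concludes by density of $\fg^{reg}$ in the integral scheme $\fg$. Your write-up merely makes explicit the two points the paper leaves implicit, namely that $da\circ(\chi^*\tau)$ is a genuine morphism over all of $\fg$ by functoriality of $\Lie$, and that schematic density plus separatedness of the target yields the extension.
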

\begin{proof}Consider the universal situation $x=\id:\fg\to\fg$. Then we need to show that $\chi^*(da\circ\tau):\fg\to  \chi^*\Lie\!\ J\to \fg\times \fg$ is the diagonal map. But by definition, this is true when restricted to $\fg^{reg}\subset \fg$. Therefore it holds over $\fg$. 
\end{proof}
\begin{remark}In particular, if we take $x=0$, the lemma shows that $da_0:\Lie J_{\chi(0)}\to \frakg$ is not injective. I.e. the map $a:\chi^*J\to I$ is neither injective nor surjective over a general point $x\in \fg$.
\end{remark}

Observe that $\bG_m$ acts on $\fg^{reg}\times \fg$ via natural
homothethies on both factors, and therefore on $\chi^*\Lie\!\
J|_{\fg^{reg}}\simeq \Lie\!\ (I|_{\fg^{reg}})\subset \fg^{reg}\times
\fg$. This $\bG_m$-action on $\chi^*\Lie\!\ J|_{\fg^{reg}}$ descends to
a $\bG_m$-action on $\Lie\!\ J$ and for any line bundle $\mL$ on $C$,
the $\mL^\times$-twist $(\Lie\!\ J)\times^{\bG_m}\mL^\times$ under this
$\bG_m$ action is $\Lie\!\ (J_\mL)\otimes \mL$, where $J_\mL$ is
introduced in \S \ref{symmetry of Hitchin}. In addition, $\tau$ is $\bG_m$-equivariant with respect to this $\bG_m$ action on
$\Lie\!\ J$ and the natural $\bG_m$ action on $\fc$. Therefore, if we define a vector bundle
$B_{J,\mL}$ over $B_{\mL}$, whose fiber over $b\in B_{\mL}$ is
$\Gamma(C,\Lie\!\ J_b\otimes \mL)$, then
twisting $\tau$ by $\mL$, we obtain 
\begin{equation}\label{taut sect II}
\tau_{\mL}: B_{\mL}\to B_{J,\mL}.
\end{equation}
which is a canonical section of the  projection $\pr:B_{J,\mL}\to
B_{\mL}$.

%%%%%%%%%%%%%%

\section{The non-abelian Hodge theory}\label{pHit}
Let $\Loc_{G}$ (or $\Loc$ for brevity) be the moduli space of $G$-local systems on the curve
$C$. The natural map $\Loc_{G}\to \Bun_G$ can be regarded as a
deformation of the map $\pi:T^*\Bun_{G}\to \Bun_{G}$.  On the other
hand, $T^*\Bun_G$ is just the moduli of Higgs field $\on{Higgs}_G$
and there is the Hitchin fibration $h:\on{Higgs}_G\to B$. In this
section, we show that in the case $\on{char} k=p>0$, there is the $p$-Hitchin map
$h_p:\Loc_{G}\to B'$,
which is reminiscent of the classical
Hitchin map in many aspects. In particular, we prove the non-abelian Hodge theory for $C$, which among other things, implies that \'{e}tale locally on $B'$, $h_p$ is isomorphic to $h':\Hg'_G\to B'$.
For general discussions of local
systems on a smooth variety, we refer to Appendix \ref{A}.

\subsection{The $p$-Hitchin map for $G$-local systems}\label{p-Hitchin map}
Let $\Loc_{G}$ be the stack of $G$-local systems on $C$, i.e. for
every scheme $S$ over $k$, $\Loc_{G}(S)$ is the groupoid of all
$G$-torsors $E$ on $C\times S$ together with a connection $\nabla:
T_{C\times S/S}\ra\widetilde{T}_E$ (cf. \S \ref{Lie algebroid}). For every $(E,\nabla)\in\Loc_{G}$
the $p$-curvature  of $\nabla$ is an element
$\Psi(\nabla)\in\Gamma(C,\ad (E)\otimes F_C^*\omega')$ (cf. \S \ref{appen:p-curv}). 
Note that $F_X^*\omega'\simeq \omega^p$, the $p$-th power of $\omega$.
Thus we get an element $(E,\Psi(\nabla))$
in $\on{Higgs}_{G,\omega^p}$, the stack  of $F$-Higgs bundles, and the
assignment $(E,\nabla)\ra (E,\Psi(\nabla))$ defines a map
$$\Psi_{G}:\Loc_{ G}\ra\on{Higgs}_{G,\omega^p}.$$ 
Observe that pullback along $F_C:C\to C'$ induces a natural map
\[
F^p: B'\ra B_{\omega^p}.
\]
In what follows, for $b'\in B'$, we denote by $b^p$ the image of
$b'$ under the map $F^p:B'\ra B_{\omega^p}$.

\begin{prop}\label{p-hitchin}
There is a unique morphism $h_p:\Loc_{ G}\ra B'$, making the following diagram commute
\[\begin{CD}
\Loc_G@>h_p>> B'\\
@V\Psi_G VV @VV F^p V\\
\Hg_{G,\omega^p}@>h_{\omega^p}>> B_{\omega_p}.
\end{CD}\]
\end{prop}

\begin{proof}
The argument is a variation of the proof of \cite[Proposition
3.2]{LP} which is due to J.B. Bost. Let $(E,\nabla)\in\Loc_{G}$ and let $\Psi\in\Gamma(C,\ad
(E)\otimes\omega^p)$ be the $p$-curvature of $\nabla$. As $\fc_{\omega^p}$ is the pullback of $\fc_{\omega}$ along $F_C:C\to C'$ , it has a canonical connection $\nabla^{can}$ and
 the scheme of horizontal sections $\fc_{\omega^p}^{\nabla^{can}}$ of
$\fc_{\omega^p}$ is $\fc'_{\omega'}$ (see \S \ref{horizontal sect}). Therefore,
it is enough to show that the section $h_{\omega^p}(\Psi)\in\Gamma(C,\fc_{\omega^p})$
is horizontal with respect to $\nabla^{can}$. This follows from the next lemma and the fact that 
the $p$-curvature $\Psi$ is horizontal with respect to the tensor connection on $\ad(E)\otimes\omega^p$.
%%%%%%%%%%%%%%%%%%%
\quash{
This is a local question, therefore, we can assume
that $C=\on{Spec}(k[[z]])$ and that $\omega$, $E$ are trivialized. Then as sections of $\tilde T_E$ (the Lie algeboid of infinitesimal symmetries of $E$, cf. \S \ref{Lie algebroid}),
$\nabla^{can}=\partial_z$ and $\nabla=\partial_z-A$,
where $A\in \fg[[z]]$. In addition, the $p$-curvature 
$\Psi\in \ad E\otimes\omega^p\is\fg[[z]]$.
Since $\Psi$ is horizontal with respect to the induced connection on $\ad E\otimes \omega^p$, we have
\begin{equation}\label{p}
\partial_z\Psi=[A,\Psi],
\end{equation}
where $[\ , \ ]$ is the natural bracket on $\frakg[[z]]$. For a polynomial function $f$ on $\fg$, let $A\cdot f$ denote the action of $A\in \fg$ induced by the adjoint action of $ G$ on $\fg$. Then applying \eqref{p} and by direct calculation,
\[(A\cdot f)(\Psi)=\frac{d}{dt}(f(\Psi+t[A,\Psi]))|_{t=0}=\frac{d}{dt}(f(\Psi+t\partial_z\Psi))|_{t=0}\]
is equal to $\partial_zf(\Psi)$.
Now if $f$ is an invariant polynomial, we have
$A\cdot f=0$. Therefore $(\partial_zf)(\Psi)=0$.}
%%%%%%%%%%%%%%%%%%%%%%%%%%%%%%
\end{proof}

\begin{lemma}\label{Horizontal} 
For a line bundle with a connection $(\mL,\nabla_\mL)$ on $C$ and a horizontal section
$\phi\in\Gamma(C,\ad (E)\otimes\mL)$ (with respect to the tensor connection), 
the section $h_{\mL}(E,\phi)\in\Gamma(C,\fc_{\mL})$ is a horizontal section of 
$\fc_{\mL}$ (with respect to the connection induced from $\nabla_\mL$).
\end{lemma}
\begin{proof}
This follows from the fact that $\ad (E)\otimes\mL\ra\fc_\mL$ is a horizontal morphism of $\mD_C$-schemes.
\end{proof}
We call $h_p$ the $p$-Hitchin fibration. Its fiber over $b'$ is denoted by $\Loc_{G,b'}$. 

\medskip

Similarly to the usual Hitchin fibration, the $p$-Hitchin fibration
has a large symmetry. We need a lemma. Let $(E,\nabla)$ be a $G$-local system on $C$ and $\Psi=\Psi(\nabla)\in \ad(E)\otimes \omega^p$ be the $p$-curvature. It induces a morphism $a_{E,\Psi}:J_{b^p}\to \Aut(E)$ as in \eqref{actEphi}, where $b^p=h_{\omega^p}(E,\Psi)$ is equal to $F^p(b')$ for some $b'\in B'$. Observe that $J_{b^p}=F^*J'_{b'}$ is a $\mD_C$-group
scheme so that $( J_{b^p})^{\nabla}=J'_{b'}$.
We refer to \S \ref{A} for the detailed discussion of $\mD_C$-group schemes.
\begin{lem}\label{abel:hor}
The morphism $a_{E,\Psi}$ is horizontal.
\end{lem}
\begin{proof}We consider a more general situation. 
Let $E\times \mL^\times$ be a $G\times\bbG_m$-local system on $C$. Observe that if $f:X\to Y$ is any $G\times \bbG_m$-equivariant morphism of schemes, then the twist of $f$ by $E\times\mL^\times$, denoted by $f_{E\times\mL^\times}: X_{E\times\mL^\times}\to Y_{E\times\mL^\times}$ is a horizontal morphism of $\mD_C$-schemes. If we apply the above construction to the following diagram
\[\xymatrix{G\times\fg\ar[dr]& \chi^*J\ar_{a}[l]\ar[r]\ar[d]& J\ar[d]\\
&\fg\ar^\chi[r]& \fc.
}
\]
where $a:\chi^*J\to G\times \fg$ is as in Proposition \ref{J} (1), we obtain a horizontal morphism $a_{E\times\mL^\times}:\chi_{E\times\mL^\times}^*J_\mL\to \Aut(E)$ of group schemes over the total space of $\ad(E)\otimes\mL$. If
$\phi: C\to \ad(E)\otimes\mL$ is a horizontal section, then $b_{\mL}= h_\mL(E,\phi)\in B_\mL$ is a horizontal section of $\fc_\mL$ by Lemma \ref{Horizontal}, and the pullback of $a_{E\times\mL^\times}$ along $\phi$ is $a_{E,\phi}: J_{b_\mL}\to \Aut(E,\phi)$ as in \eqref{actEphi}, which is horizontal.

Finally, the lemma follows by applying the construction to $\mL=\omega^p$ with the canonical connection, and $\phi=\Psi=\Psi(\nabla)$. 
\end{proof}

Note that the $\mD_C$-group schemes $J_{b^p}$ can be organized into 
a family over $C\times B'$, denoted by $J^p\ra C\times B'$. Namely, it is the pull back of  
the regular centralizers $J'_{\omega'}\ra\fc'_{\omega'}$
along the map $C\times B'\stackrel{F\times \id}\ra C'\times B'\stackrel{}\ra\fc'_{\omega'}$. 
Then
$J^p$ is naturally a $\mD_{C\times B'/B'}$-group scheme (cf. Example \ref{Cartier}) and 
for any $b'\in B'$, the restriction $J^p|_{C\times\{b'\}}$ is isomorphic to $J_{b^p}$.
Therefore given
$(P,\nabla)\in \Loc_{J_{b^p}}$, one can apply the formulation \S \ref{some functors} to form a
$G$-local system
\[((P,\nabla),(E,\nabla))\mapsto P\otimes E:=(a_{E,\Psi})_*P\otimes E.\]
It is easy to see that this induces a morphism
$\otimes:\Loc_{J^p}\times_{B'}\Loc_{G}\to \Loc_G$. 
\begin{lem}Assume that $h_p(E,\nabla)=b'$.
Let $P'$ be a $J'_{b'}$-torsor, and $P=F^*P'$ be the $J_{b^p}$-bundle with the canonical connection. Then $h_p(P\otimes E,\nabla_{P\otimes E})=b'$.
\end{lem}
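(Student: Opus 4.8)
The plan is to factor the $p$-Hitchin map through the $p$-curvature and reduce to an additivity property of $p$-curvatures. By Theorem \ref{p-hitchin}, $h_p$ is the unique map with $F^p\circ h_p=h_{\omega^p}\circ\Psi_G$, and $F^p:B'\to B_{\omega^p}$ is a monomorphism, since it identifies $B'$ with the horizontal sections $\on{Sect}(C,\fc_{\omega^p})^{\nabla^{can}}$ (using $\fc'_{\omega'}=(\fc_{\omega^p})^{\nabla^{can}}$ as in the proof of Theorem \ref{p-hitchin}). Hence it suffices to show that the $F$-Higgs fields $\Psi(\nabla_{P\otimes E})$ and $\Psi(\nabla)$ have the same image under $h_{\omega^p}$, i.e. the same characteristic polynomial $b^p$. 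In fact I will prove the sharper statement $\Psi(\nabla_{P\otimes E})=\Psi(\nabla)$, where the right-hand side is viewed inside $\ad(P\otimes E)\otimes\omega^p$ via the identification coming from \eqref{actEphi}: because $a_{E,\Psi}$ factors through $\Aut(E,\Psi(\nabla))$, the $J_{b^p}$-action preserves $\Psi(\nabla)$, so $\Psi(\nabla)$ descends to a canonical section of $\ad(P\otimes E)\otimes\omega^p$.

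The crux is the additivity formula
\[
\Psi(\nabla_{P\otimes E})=\Psi(\nabla)+(da_{E,\Psi})\big(\Psi(\nabla_P)\big),
\]
where $da_{E,\Psi}:\Lie J_{b^p}\to\ad(E)$ is the differential of $a_{E,\Psi}$. Granting this, the lemma is immediate: $P=F^*P'$ carries the canonical connection of a Frobenius pullback, so $\Psi(\nabla_P)=0$ by Cartier descent, whence $\Psi(\nabla_{P\otimes E})=\Psi(\nabla)$ and $h_p(P\otimes E,\nabla_{P\otimes E})=h_p(E,\nabla)=b'$. Conceptually, the formula says exactly that passing to $p$-curvatures intertwines the product map $\Loc_{J^p}\times\Loc_G\to\Loc_G$ with the twisting action of $\sP_{\omega^p}$ on $\on{Higgs}_{\omega^p}$ of \S\ref{symmetry of Hitchin}, and the latter preserves the Hitchin base $B_{\omega^p}$.

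To prove the additivity formula I would compute locally, as in the proof of Theorem \ref{p-hitchin}. After choosing a coordinate $z$, trivializing $\omega$, $E$ and $P$, and trivializing the $\mD_C$-structure on $J_{b^p}=F^*J'_{b'}$ so that $\nabla^{can}$ on $\Lie J_{b^p}$ becomes $\partial_z$, I write $\nabla=\partial_z-A$ and $\nabla_{P\otimes E}=\partial_z-A-B$ with $B=(da_{E,\Psi})(\theta)$, where $\nabla_P=\partial_z-\theta$; the three $p$-curvatures are then the $p$-th powers $(\partial_z-A)^p$, $(\partial_z-\theta)^{[p]}$ and $(\partial_z-A-B)^p$. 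The image $\mathfrak{i}:=\on{Im}(da_{E,\Psi})\subset\ad(E)$ is a commutative restricted subalgebra (because $J_{b^p}$ is commutative and $da_{E,\Psi}$ is the differential of a group-scheme homomorphism), it centralizes $\Psi(\nabla)=(\partial_z-A)^p$ (because $a_{E,\Psi}$ factors through $\Aut(E,\Psi(\nabla))$), and it is $\ad(\partial_z-A)$-stable with $\ad(\partial_z-A)^k(B)=(da_{E,\Psi})(\partial_z^k\theta)$, which is precisely the horizontality of Lemma \ref{abel:hor}. Feeding these into Jacobson's formula for $(\partial_z-A-B)^p=\big((\partial_z-A)+(-B)\big)^p$, every iterated commutator involving two factors from $\mathfrak{i}$ dies by commutativity, so only the single term linear in $B$ survives; together with the term $(-B)^{[p]}$ it reassembles, using that $da_{E,\Psi}$ preserves the restricted $p$-operation, into $(da_{E,\Psi})\big((\partial_z-\theta)^{[p]}\big)=(da_{E,\Psi})(\Psi(\nabla_P))$. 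This is the desired identity, and in the chosen trivialization it reads as the literal equality $\Psi(\nabla_{P\otimes E})=\Psi(\nabla)$ once $\Psi(\nabla_P)=0$.

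The main obstacle is exactly this last step: in general the $p$-curvature is not additive under adding a Higgs-type one-form, and one must genuinely exploit both the commutativity of the regular centralizer $J_{b^p}$ and the horizontality of $a_{E,\Psi}$ to see that all mixed Jacobson terms either vanish or telescope. Everything else—the monomorphism property of $F^p$, the vanishing $\Psi(\nabla_P)=0$ by Cartier descent, and the compatibility with the $\sP_{\omega^p}$-action—is formal.
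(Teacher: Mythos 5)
Your proposal is correct and follows essentially the same route as the paper: the paper's proof is a one-line reduction to a local ``direct calculation'' (with details deferred to Lemma \ref{J to G}, whose mechanism via diagram \eqref{p-curv for} is exactly your additivity formula $\Psi(\nabla_{P\otimes E})=\Psi(\nabla)+da_{E,\Psi}(\Psi(\nabla_P))$, combined with $\Psi(\nabla_P)=0$ by Cartier descent). Your write-up merely makes that calculation explicit, correctly identifying the two inputs that kill the mixed Jacobson terms — commutativity of $J_{b^p}$ and horizontality of $a_{E,\Psi}$ — so it is a faithful, more detailed version of the paper's argument rather than a different one.
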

The question is local so one can assume that bundles are trivial. Then it is a direct calculation.
We refer to Lemma \ref{J to G} for more details, where a similar statement is proved. We identify $\sP'$ with the substack of $\Loc_{J^p}$ with vanishing $p$-curvature. Then we obtain
\begin{prop}
There is a natural action morphism $\sP'\times_{B'}\Loc_G\to \Loc_G$ of stacks over $B'$.
\end{prop}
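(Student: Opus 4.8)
The plan is to read off the desired action by combining the twisting morphism supplied by \S \ref{some functors} with the two lemmas just established, and then to promote the resulting morphism to an action relative to $B'$. The key point is that the real work has already been done in Lemma \ref{abel:hor}: once the homomorphism $a_{E,\Psi}\colon J_{b^p}\to\Aut(E)$ is known to be horizontal, twisting a $G$-local system by a $J_{b^p}$-local system automatically produces a $G$-local system rather than merely a $G$-bundle, so that the remaining tasks are bookkeeping over $B'$ together with the monoidal coherence of the twist.

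First I would pin down the source of the action. By Cartier descent, a $J_{b^p}=F^*J'_{b'}$-local system with vanishing $p$-curvature is the same datum as a $J'_{b'}$-torsor on $C'$ equipped with its canonical connection; this is exactly the identification of $\sP'$ with the vanishing-$p$-curvature substack of $\Loc_{J^p}$ invoked above. Restricting the twisting morphism of \S \ref{some functors} along $\sP'\hookrightarrow\Loc_{J^p}$, and recording that $J_{b^p}$ is attached to $b'=h_p(E,\nabla)$ so that the construction is defined over $B'$, then produces
\[
\sP'\times_{B'}\Loc_G\to\Loc_G,\qquad \bigl(P',(E,\nabla)\bigr)\longmapsto \bigl(F^*P'\otimes E,\ \nabla_{F^*P'\otimes E}\bigr).
\]
That this morphism lands over $B'$, i.e.\ preserves the fibers of $h_p$, is precisely the content of the lemma preceding the statement (compare Lemma \ref{J to G}); hence the map is a well-defined morphism of stacks over $B'$.

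It then remains to upgrade this morphism to an action by producing the unit and associativity isomorphisms. The unit is immediate, since the trivial $J'_{b'}$-torsor pulls back to the trivial $J_{b^p}$-local system and twisting by it returns $(E,\nabla)$ up to a canonical horizontal isomorphism. For associativity I would combine the fact that $a_{E,\Psi}$ is a homomorphism with the Picard structure of $\sP'$: the canonical isomorphism $F^*(P_1'\otimes P_2')\cong F^*P_1'\otimes F^*P_2'$ of $J_{b^p}$-local systems, together with the associativity constraint for the contracted product $P\otimes E=P\times^{J_{b^p}}E$, yields a natural isomorphism $(P_1'\otimes P_2')\otimes E\cong P_1'\otimes(P_2'\otimes E)$ of the underlying $G$-torsors.

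The main obstacle, and really the only nontrivial point, is to verify that these structural isomorphisms are \emph{horizontal}, i.e.\ compatible with the product connections $\nabla_{P\otimes E}$. This is exactly what the formalism of Appendix \ref{A} is designed to deliver: the product connection is functorial in the horizontal homomorphism $a_{E,\Psi}$, so the unit and associativity isomorphisms arising from the Picard structure of $\sP'$ lift automatically to horizontal isomorphisms, and the pentagon and unit coherences are inherited from those already holding in $\sP'$. I expect this final verification to be formal once the product-connection functor of Appendix \ref{A} is in hand, with the horizontality of $a_{E,\Psi}$ from Lemma \ref{abel:hor} being the ingredient that makes the entire construction close up.
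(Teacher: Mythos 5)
Your proposal is correct and follows essentially the same route as the paper: horizontality of $a_{E,\Psi}$ (Lemma \ref{abel:hor}) feeds into the twisting functor of \S \ref{some functors} to give $\Loc_{J^p}\times\Loc_G\to\Loc_G$, the preceding lemma shows the fiber of $h_p$ is preserved, and one restricts along the identification of $\sP'$ with the vanishing-$p$-curvature substack of $\Loc_{J^p}$. The extra attention you pay to the unit, associativity, and horizontality of the coherence isomorphisms is a formal point the paper leaves implicit, not a different argument.
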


\begin{remark}
Alternatively, one can see the action of $\sP'$ on $\Loc_G$ as follows. One can define a sheaf $\underline\Aut(E,\nabla)$ on $X'_{et}$
by assigning every \'{e}tale map $f:U'\to X'$ the group
\[\underline{\Aut}(E,\nabla)(U')=\Aut(f^*E,f^*\nabla).\]
One can show that there is an isomorphism $\Aut(E)^\nabla=\underline{\Aut}(E,\nabla)$ as \'{e}tale sheaves on $X'$. Then it follows from Lemma \ref{abel:hor} that there is an action of $J'_{b'}$ on $(E,\nabla)$, and therefore we can twist $(E,\nabla)$ by a $J'_{b'}$-torsor. 
\end{remark}

%%%%%%%%%%%%%%%%%%%%%%%

\quash{
\subsection{$\lambda$-connections}\label{lambda-connection}
For any $\lambda\in k$, a $\lambda$-connection on a $G$-torsor $E$
is an $\mO_C$-linear map
$$\nabla_\lambda:T_C\ra\widetilde T_E$$
such that the composition $$\sigma\circ \nabla_\lambda:T_C\ra T_C$$
is equal to $\lambda\cdot \id_{T_C}$ (where $\sigma: \widetilde
T_E\ra T_C$ is the natural projection as in \S \ref{Atiyah
algebroid}). We denote by $\Loc_{G,\lambda}$ the stack of
$G$-bundles on $C$ with $\lambda$-connections. Then
\[\Loc_{G,1}=\Loc_G, \quad \Loc_{G,0}=\on{Higgs}_G.\]

Let $(E,\nabla_\lambda)\in\Loc_{G,\lambda}$. The $p$-curvature of
$\nabla_\lambda$ is defined as
$$\Psi(\nabla_\lambda): F^*T_{C'}\ra \ad(E), \quad v\ra\nabla_\lambda(v)^p-\lambda^{p-1}\nabla_\lambda(v^p).$$
By the same construction as in Theorem \ref{p-hitchin}, the map
$\Loc_{G,\lambda}\to B_{\omega^p},\ (E,\nabla_\lambda)\mapsto
h_{\omega^p}(E,\Psi(\nabla_\lambda))$ factors through a unique map
$$h_{p,\lambda}:\Loc_{G,\lambda}\ra B',$$
called the $p$-Hitchin map for $\lambda$-connections. It is clear
that $h_{p,1}=h_p$ and $h_{p,0}=F\circ h$, where $h:\on{Higgs}\to B$
is the usual Hitchin map and $F:B\to B'$ is the relative Frobenius
of $B$. From this perspective, the $p$-Hitchin map can be regarded
as a deformation of the usual Hitchin map.

%%%%%%%%%%%%%%%%%%%%%%%%%%%%

\subsection{Opers and a quasi-section of the $p$-Hitchin map}\label{oper}
Recall that the usual Hitchin map admits sections given by the
Kostant sections. The analogue for the $p$-Hitchin map is the
quasi-section given by the substack of opers in $\Loc_G$.

Let us first recall the definition of opers  following \cite[\S
3]{BD}. There is a canonical decreasing Lie algebra filtration
$\{\fg^{k}\}$ of $\fg$
$$\cdot\cdot\cdot\supset\fg^{-1}\supset\fg^{0}\supset\fg^{1}\supset\cdot\cdot\cdot$$
such that $\fg^0=\frakb$, $\fg^1=\frakn$ and for any $k>0$ (resp.
$<0$)  weights of the action of $\frakt=\on{gr}^0(\fg)$ on $\on{gr}^k(\fg)$
are sums of $k$ simple positive (resp. negative) roots. In
particular, we have $\on{gr}^{-1}(\fg)=\oplus\fg_{\alpha}$, where
$\alpha$ is a simple negative root and $\fg_{\alpha}$ is the
corresponding root space.

Let $E$ be a $B$-torsor on $C$ and $E_G$ be the induced $G$-torsor
on $C$. In this subsection, we denote by $\frakb_{E}$ and $\fg_{E_G}=\fg_E$ be the associated
adjoint bundles (rather than $\ad(E)$). Let $\widetilde T_{E}$ and $\widetilde T_{E_G}$ be
the Lie algebroid of infinitesimal symmetries of and $E$ and $E_G$.
There is a natural embedding $\widetilde T_{E}\ra \widetilde
T_{E_G}$ and we have a canonical isomorphism
$$\widetilde T_{E_G}/\widetilde T_{E}\is(\fg/\fb)_{E}=: E\times^B(\fg/\fb).$$
For any connection $\nabla$ on $E_G$, we denote by $\bar\nabla$ the
composition
$$\bar\nabla:T_C\stackrel{\nabla}{\to}\widetilde
T_{E_G}\to  \widetilde T_{E_G}/\widetilde T_{E}\is(\fg/\fb)_{E}.$$

\begin{definition}
A $G$-oper on $C$ is a $B$-torsor $E$ together with a connection
$\nabla$ on $E_G$ such that
\begin{enumerate}
 \item The image of $\bar\nabla$
lands in  $(\fg^{-1}/\frakb)_{E}\subset(\fg/\fb)_{E}$.
\item The composition $$T_C\stackrel{\bar\nabla}{\to}(\fg^{-1}/\frakb)
_{E}\stackrel{pr_{\alpha}}{\to}(\fg_{\alpha})_{E}$$ is an
isomorphism for every simple negative root $\alpha$. Here
$$pr_{\alpha}:(\fg^{-1}/\frakb)
_{E}=\oplus(\fg_{\beta})_{E}\ra(\fg_{\alpha})_{E}$$ is the natural
projection.
\end{enumerate}
\end{definition}

We denote by $\on{Op}_G$ the stack of $G$-opers on $C$. As is shown
in \emph{loc. cit.}, if $G=G_{\ad}$ is of adjoint type, then
$\on{Op}_G$ is a scheme, and is usually denoted by $\Op_{\fg}$. In
general $\on{Op}_G$ is a gerbe over $\on{Op}_{\fg}$, with the band
$Z(G)$. A choice of square root $\kappa$ of $\omega$ trivializes the
gerbe, i.e. a section $\kappa:\on{Op}_{\fg}\to\on{Op}_G$.

One defines a $(G,\lambda)$-oper as before by replacing connection
$\nabla$ by $\lambda$-connection $\nabla_\lambda$. We denote by
$\on{Op}_{G,\lambda}$ the stack of $(G,\lambda)$-opers . Let
$\kappa:\Op_{\fg,\lambda}\to \Op_{G,\lambda}$ be a trivialization of
the gerbe given by $\kappa$. As shown in \cite[3.1.14]{BD},
$\on{Op}_{G,1}\is \on{Op}_G$, and over $0\in\mathbb A^1$, there is a canonical isomorphism
$\on{Op}_{\fg,0}\is B$. such that the composition
\begin{equation}\label{Op at zero}
B\simeq \on{Op}_{\fg,0}\stackrel{\kappa}{\to} \on{Op}_{G,0}\to
\Loc_{G,0}=\on{Higgs}_G
\end{equation}
is the Kostant section induced by $\kappa$.

All $(G,\lambda)$-opers form a stack, flat over $\bbA^1$,
$\widetilde{\on{Op}_G}\ra\mathbb A^1$, such that the fiber of
$\widetilde{\on{Op}_G}$ over $\lambda\in\bbA^1(k)$ is
$\on{Op}_{G,\lambda}$. Moreover, there a $\bG_m$-action on
$\widetilde{\on{Op}_G}$, given by $(E,\nabla)\mapsto (E,t\nabla)$,
then the morphism $\widetilde{\on{Op}_G}\ra\mathbb A^1$ is
$\bG_m$-equivariant.

The $p$-Hitchin map gives
$$\widetilde h_p:\widetilde{\on{Op}_G}\ra B'\times\mathbb A^1,\ \ \
\ (E,\nabla_\lambda)\ra
(h_{p,\lambda}(E_G,\nabla_\lambda),\lambda).$$ Recall that there is
a $\bG_m$-action on $B'$ and one can check that $\widetilde h_p$ is
$\bG_m$-equivariant where $\bG_m$ acts on $B'\times\mathbb A^1$
diagonally. Let us fix $\kappa:\widetilde{\Op_{\fg}}\to
\widetilde{\Op_G}$. Then $h_{p,1}\kappa$ is a map $\Op_\fg\to B'$
but $h_{p,0}\kappa$ is the relative Frobenius morphism $F:B\simeq
\on{Op}_{\fg,0}\ra B'$. Therefore, we have

\begin{lem}
Let $h_p^*:R_{B'}\to R_{\Op}$ be the map of ring of functions
corresponding to $h_{p}:\on{Op}_\fg\ra B'$. Then there are
filtrations on $R_{\Op}$ and $R_{B'}$ such that
\begin{enumerate}
\item The associated graded $\on{gr}(R_{\Op})\is R_B$ and $\on{gr}(R_{B'})\is R_{B'}$.
\item $h_p^*$ is compatible with the filtrations.
\item Then induced morphism
$$\on{gr}(h_p^*): R_{B'}\ra R_{B}$$
is the relative Frobenius map.
\end{enumerate}
\end{lem}

\begin{corollary}\label{op}
The $p$-Hitchin map
$$h_p:\on{Op}_{G}\ra B'$$
is finite and faithfully flat. If $G$ is of ajoint type, the degree of $h_p$ is $p^{(g-1)\dim G}$.
\end{corollary}
\begin{proof}
We may assume that $G$ is of adjoint type. We first show that $h_p$
is finite and surjective. I.e., we need to show that
$h_p^*:R_{B'}\ra R_{\Op}$ is injective and $R_{\Op}$ is finitely
generated as an $R_{B'}$-module. Since both rings $R_{\Op}$ and
$R_{B'}$ are filtered and $h_p^*$ is compatible with the
filtrations, it is enough to show that  associated graded
$gr(h_p^*):\on{gr}(R_{B'})\ra \on{gr}(R_{\Op})$ is injective and $\on{gr}(R_{\Op})$
is finitely generated $\on{gr}(R_{B'})$-module. But this is clear since
by the above lemma $\on{gr}(h_p^*)$ is the Frobenius map. Now $h_p$ is a
finite map between $\on{Op}_{\fg}$ and $B'$, which are smooth of the
same dimension, and therefore it is flat. In addition, as the relative Frobenius map $B\to B'$ is of degree $p^{(g-1)\dim G}$, so is $h_p$.
\end{proof}

%%%%%%%%%%%%%%%%%%%%%%%%%%%%

\subsection{Abelianisation for $\Loc^{reg}_{G}$}\label{Ab for pHit}
To proceed, let us recall the usual Hitchin fibration admits a large
symmetry (cf. \S \ref{symmetry of Hitchin}). Namely, the Picard
stack $\sP$ acts on $\on{Higgs}$ such that $\on{Higgs}^{reg}$ forms
a $\sP$-torsor under this action. In addition, the Kostant section
$\kappa: B\to \on{Higgs}^{reg}$ trivializes this $\sP$-torsor.

We define
$$\Loc_{ G}^{reg}=\Psi_G^{-1}(\on{Higgs}_{G,\omega^p}^{reg}).$$
Then we have the following theorem.

\begin{thm}\label{regular Loc}
The stack $\Loc_G^{reg}\to B'$ is a natural $\sP'$-torsor.
\end{thm}
\begin{remark}This $\sP'$-torsor $\Loc_{ G}^{reg}$ is not
trivial. 
\end{remark}

Now we prove Theorem \ref{regular Loc}. We first establish
\begin{lem}\label{abel:surj}
After a faithfully flat base change of $B'$, the map $h_p:\Loc_G^{reg}\to B'$ admits a section.
\end{lem}
\begin{proof}As $\Op_G\to B'$ is faithfully flat, it is surjective. Therefore, it is enough to show that the natural map $\Op_G\to\Loc_G$ factors through $\Op_G\to\Loc_G^{reg}$.

Consider the $\bbA^1$-family of maps $\on{Op}_{G,\lambda}\to
\Loc_{G,\lambda}$. By \eqref{Op at zero}, $\on{Op}_{G,0}$ maps to
$\on{Higgs}_G^{reg}$. Therefore, $\on{Op}_{G,\lambda}$ maps to
$\Loc_{G,\lambda}^{reg}$ for some $\lambda\neq 0$ and therefore for
all $\lambda$.
\end{proof}

Next, we will construct an action of $\sP'$ on $\Loc_G^{reg}$ such that $\Loc_G^{reg}$ is a pseudo torsor under this action, which means that if $b'\in B'(S)$ such that $\Loc_{G,b'}^{reg}\to S$ admits a section, then $\sP'_{b'}$ acts simply-transitively on $\Loc_{G,b'}^{reg}$. Then by the above lemma, $h_p:\Loc_G^{reg}\to B'$ admits a section flat locally on $B'$, and therefore the lemma follows.

To show that $\Loc_G^{reg}$ is a natural $\sP'$ pseudo-torsor, we will adapt an argument of
\cite{DG} to our situation.
Let $b'\in B'(S)$ and $b^p\in
B_{\omega^p}(S)$. 
Let us omit the base $S$ from the notation as
before. Let us define a sheaf $\sQ$ of categories on $C'_{et}$ as
follows. For every $(U'\to C')\in C'_{et}$, $\sQ(U')$ is the
groupoid of pairs $(E,\nabla)$, where $(E,\nabla)$ is a $ G$-local
system on $U(=U'\times_{C'}C)$ such that: 
\begin{enumerate}
\item $h_p(E,\nabla)\in
\Gamma(U',\fc'_{\omega_{C'}})$ is equal to the restriction of $b'$
to $U'$; 
\item $\Aut(E,\Psi(\nabla))= J_{b^p}|_U$. 
\end{enumerate}
Clearly, $\Loc_{G,b'}^{reg}:=\Loc_{ G}^{reg}\cap\Loc_{ G,b'}$ is the push
forward of $\sQ$ along $C'_S\to S$.

It is enough to prove the following two lemmas.
\begin{lemma}\label{abel:band}
For every $(E,\nabla)\in \sQ(U)$, $\underline{\Aut}_{\sQ}(E,\nabla)=J'_{b'}|_U$.
\end{lemma}
\begin{lem}\label{abel:connect}
Every two objects in $\sQ$ are locally isomorphic.
\end{lem}

We first prove Lemma \ref{abel:band}. Let $(E,\nabla)$ be a $G$-local system on $C$ and $\Psi=\Psi(\nabla)\in \ad(E)\otimes \omega^p$ be the $p$-curvature. It induces a morphism $a_{E,\Psi}:J_{b^p}\to \Aut(E)$ as in \eqref{actEphi}. Observe that the affine smooth group scheme $J_{b^p}$ is isomorphic
to $C\times_{C'}J'_{b'}$, in particular, it is a $\mD_C$ group
scheme and
$$( J_{b^p})^{\nabla}=J'_{b'}.$$
We refer to \S \ref{A} for the detailed discussion of $\mD_C$-group schemes.
\begin{lem}\label{abel:hor}
The morphism $a_{E,\Psi}$ is horizontal.
\end{lem}
\begin{proof}We consider more general situation. Let $E\times \mL^\times$ be a $G\times\bbG_m$-local system on $C$. Observe that if $f:X\to Y$ is any $G\times \bbG_m$-equivariant morphism of schemes, the twist of $f$ by $E\times\mL^\times$, denoted by $f_{E\times\mL^\times}: X_{E\times\mL^\times}\to Y_{E\times\mL^\times}$ is a horizontal morphism of $\mD_C$-schemes. We apply the above construction to the following diagram
\[\xymatrix{G\times\fg\ar[dr]& \chi^*J\ar^{a}[l]\ar[r]\ar[d]& J\ar[d]\\
&\fg\ar^\chi[r]& \fc.
}
\]
where $a:\chi^*J\to G\times \fg$ is as in Proposition \ref{J} (1). We thus obtain a horizontal morphism $a_{E\times\mL^\times}:\chi_{E\times\mL^\times}^*J_\mL\to \Aut(E)$ of group schemes over the total space of $\ad(E)\otimes\mL$. If $\phi: C\to \ad(E)\otimes\mL$ is a horizontal section, then $b= h_\mL(E,\phi)\in B_\mL$ is a horizontal section of $\fc_\mL$ and the pullback of $a_{E\times\mL^\times}$ along $\phi$ is $a_{E,\phi}: J_b\to \Aut(E,\phi)$ as in \eqref{actEphi}, which is horizontal.

Now apply the construction to $\mL=\omega^p$ with the canonical connection, and $\phi=\Psi=\Psi(\nabla)$. The lemma follows.
\end{proof}

Observe that the above argument applies to open curve $U\subset C$. By definition, for $(E,\nabla)\in\mQ(U)$, $a_{E,\Psi}: J_{b^p}|_U\simeq \Aut(E)^{\Psi}$, and by the above lemma, this isomorphism is horizontal. Taking horizontal
sections, we therefore obtain an isomorphism of sheaf of groups over
$U'_{et}$,
\[J_{b'}|_{U'}\simeq (\Aut(E)^\Psi)^\nabla=\Aut(E)^{\nabla}\simeq \underline\Aut(E,\nabla),\]
where the equality in the middle is due to \eqref{horizontal}, and
the last isomorphism is due to \S \ref{hori autE}.  

It remains to prove Lemma \ref{abel:connect}.
Let $(E_i,\nabla_i),
i=1,2$ be two objects. Then by one of the main results of \cite{DG},
\'{e}tale locally, $(E_i,\Psi(\nabla_i))$ are isomorphic. Then we
adapt the argument of \cite[Theorem 4.1(1)]{OV} to show that this
implies that $(E_i,\nabla_i)$ are locally isomorphic. In addition,
form the isom scheme $\on{Isom}(E_1,E_2)$ of $G$-torsors. Then by
the same reasoning as in \ref{autE}, this is a $\mD_C$-scheme, and
by the same reasoning as proving Lemma \ref{tech2},
$$\on{Isom}(E_1,E_2)^\nabla \simeq \underline{\on{Isom}}((E_1,\nabla_1),(E_2,\nabla_2)),$$
where $\on{Isom}((E_1,\nabla_1),(E_2,\nabla_2))$ is the sheaf on
$C'_{et}$ of isomorphisms between these two connections. On the
other hand $\on{Isom}(E_1,E_2)^\nabla$ is a pseudo-torsor under
$\Aut(E_2)^\nabla=J_{b'}$, which is smooth over $C'$. It is enough
to show that this is a torsor (and then it admits sections \'{e}tale
locally). By \eqref{horizontal},
$\on{Isom}((E_1,\Psi(\nabla_1)),(E_2,\Psi(\nabla_2)))=F^*\on{Isom}(E_1,E_2)^\nabla$,
which admits sections. In particular, it is non-empty.

}

%%%%%%%%%%%%%%%%%%%%%%%%%%%%%%%%%%%%

\subsection{The stack $\sH$}\label{locJtau}
We will show that the $p$-Hitchin map
$h_p:\Loc\to B'$ is closely related to the classical Hitchin map
$h':\Hg'\to B'$. Namely, they are related by a $\sP'$-torsor
$\sH$.

Let $b'\in B'$, and $J'_{b'}$ be the group scheme over $C'$. Then $F^*_CJ'_{b'}=J_{b^p}$ is a $\mD_C$-group scheme. Therefore, it makes sense to define the stack $\Loc_{J_{b^p}}$ of $J_{b^p}$-local
systems on $C$, where $b'\in B'$. This is representable 
because the map $\Loc_{J_{b^p}}$ to the Picard stack $\sP_{b^p}$  of $J_{b^p}$-torsors on $C$ is schematic. By Proposition \ref{p-hitchin} (or \S
\ref{The commutative case}), there is the $p$-Hitchin map
\begin{equation}\label{locJtau:p}
h_{J_{b'}}:\Loc_{J_{b^p}}\to B'_{J'_{b'}}:=\Gamma(C',\Lie
J_{b'}\otimes \omega_{C'}).
\end{equation}
Recall that $B'_{J'_{b'}}$ is just the fiber of the vector bundle
$B'_{J',\omega'}$ over $B'$ introduced in \S \ref{tau and c}. It is
convenient to work with the universal situation, i.e. $b':B'\to B'$
is the identity map. Then the $p$-Hitchin map can be organized into a morphism
\[h_{J'}:\Loc_{J^p}\to B'_{J',\omega'},\]
over $B'$.
The result in \ref{tangent map} implies
\begin{lemma}\label{smoothness}
This morphism is smooth.
\end{lemma}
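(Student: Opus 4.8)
The plan is to establish smoothness by computing the relative tangent complex of $h_{J'}$ and showing that it is perfect and concentrated in degrees $[-1,0]$, which together with local finiteness of presentation is the deformation-theoretic criterion for a morphism of algebraic stacks to be smooth. I would work at a point $(P,\nabla_P)\in\Loc_{J^p}$ over $b'\in B'$ and analyze the differential of $h_{J'}$.

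First, record the relevant tangent complexes. Because $J_{b^p}$ is commutative, the deformation theory of $J_{b^p}$-local systems is linear, and the tangent complex of $\Loc_{J^p}$ relative to $B'$ at $(P,\nabla_P)$ is the de Rham hypercohomology $R\Gamma_{dR}(C,\Lie J_{b^p})[1]$ of the two-term complex $[\Lie J_{b^p}\xrightarrow{\nabla^{can}}\Lie J_{b^p}\otimes\omega]$, where $\nabla^{can}$ is the canonical connection on $\Lie J_{b^p}=F_C^{*}\Lie J'_{b'}$. The target $B'_{J'}\to B'$ is the vector bundle with fibre $\Gamma(C',\Lie J'_{b'}\otimes\omega_{C'})$, so its relative tangent space is $\h^0(C',\Lie J'_{b'}\otimes\omega_{C'})$ placed in degree $0$.

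Next, identify the differential $dh_{J'}$ using the computation of \S\ref{tangent map}. The essential tool is Cartier theory: pushing the de Rham complex of $\Lie J_{b^p}=F_C^{*}\Lie J'_{b'}$ forward along $F_C$ gives a complex on $C'$ with cohomology sheaves $\Lie J'_{b'}$ in degree $0$ and $\Lie J'_{b'}\otimes\omega_{C'}$ in degree $1$. As $C'$ is a curve the associated spectral sequence degenerates, yielding a canonical splitting $\bbH^1_{dR}(C,\Lie J_{b^p})\simeq\h^1(C',\Lie J'_{b'})\oplus\h^0(C',\Lie J'_{b'}\otimes\omega_{C'})$, and \S\ref{tangent map} shows that under this splitting $dh_{J'}$ is the projection onto the second summand. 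In particular $dh_{J'}$ is surjective with kernel $\h^1(C',\Lie J'_{b'})$. Geometrically this reflects that the fibres of $h_{J'}$ are torsors under the Picard stack $\sP'=\Bun_{J'}$ of $J'$-torsors on $C'$: twisting a $J_{b^p}$-local system by the Frobenius pullback (with its canonical, $p$-curvature-free connection) of a $J'_{b'}$-torsor leaves the descended $p$-curvature unchanged, and by Cartier descent this action is simply transitive on each nonempty fibre.

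Consequently the relative tangent complex of $h_{J'}$ is identified with $R\Gamma(C',\Lie J'_{b'})[1]$, the tangent complex of $\sP'$. Its cohomology is $\h^0(C',\Lie J'_{b'})$ in degree $-1$ (infinitesimal automorphisms), $\h^1(C',\Lie J'_{b'})$ in degree $0$, and $\h^2(C',\Lie J'_{b'})$ in degree $1$. Since $C'$ is a smooth projective curve, coherent cohomology vanishes in degrees $>1$, so $\h^2(C',\Lie J'_{b'})=0$ and the relative tangent complex is perfect, concentrated in $[-1,0]$; this is exactly the criterion for $h_{J'}$ to be smooth. The main obstacle is the middle step — the identification, carried out in \S\ref{tangent map} via the deformation theory of $\mathcal{D}_C$-group schemes from Appendix \ref{A} and the four-term exact sequence attached to $J^p=F_C^{*}J'$, that the differential of the $p$-curvature map is the Cartier edge projection, and hence that the relative tangent complex collapses to that of $\sP'$. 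Once this is in hand, the vanishing of $\h^2$ on the curve makes smoothness immediate.
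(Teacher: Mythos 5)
Your argument is correct and follows essentially the same route as the paper: both identify $dh_{J'}$ with the edge map of the conjugate (``second'') spectral sequence for the de Rham complex of $\Lie J^p$ via Lemma \ref{appen:tangent}, and both conclude surjectivity from the vanishing of $\h^2$ of a coherent sheaf on the curve $C'$. One small caution: degeneration of the spectral sequence gives a short exact sequence $0\to \h^1(C',\Lie J'_{b'})\to \bbH^1_{dR}\to \h^0(C',\Lie J'_{b'}\otimes\omega_{C'})\to 0$ rather than a canonical splitting, but only the surjectivity of the edge map is needed, so this does not affect the proof.
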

\begin{proof}
First, both $\Loc_{J^p}$ and $B'_{J',\omega'}$ are smooth Picard stacks over $B'$. Namely, the latter is a vector bundle over $B'$ and the former is an extension of the smooth Picard stack $\sP_{\omega^p}\times_{B_{\omega^p}}B'$ by a vector group. Therefore we can check the smoothness of $h_{J'}$ fiberwise over $B'$. In addition, the map $h_{J'}$ is additive by \eqref{appen:add}. 
Now Lemma \ref{appen:tangent} identifies the tangent map $dh_{J'}$ at the unit with the edge map $c: E^1_\infty\to E^{0,1}_2$ in the ``second spectral sequence" that calculating the hypercohomology of the de Rham complex $(\Lie J_{b^p}\to \Lie J_{b^p}\otimes \omega_C)$. But the latter is surjective because $E_2^{2,0}\is H^2(C',\Lie J'_{b'})=0$.
\end{proof}

Recall the section $\tau': B'\to B'_{J',\omega'}$ defined in \eqref{taut
sect II}. Let us define 
\[\sH=\Loc_{J^p}(\tau'):=B'\times_{B_{J',\omega'}'}\Loc_{J^p},\]
the base change of $\Loc_{J^p}\ra B'_{J',\omega'}$ along $\tau'$. 
By Corollary \ref{commutative Cartier}, this is a pseudo $\sP'$-torsor. An object in $\sH$ is a $J^p$-local system $(P,\nabla)$ with a specific $p$-curvature. We have the following theorem whose proof will be given in \S\ref{proof}.

\begin{thm}\label{sH torsor}
The stack $\sH$ is a $\sP'$-torsor.
\end{thm}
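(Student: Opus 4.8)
The plan is to upgrade the pseudo-torsor structure (already established by Corollary \ref{commutative Cartier}) to a genuine torsor structure. Since $\sP'$ is a smooth Picard stack over $B'$ and $\sH$ is a pseudo $\sP'$-torsor, the only remaining content is to show that the structure map $\sH \to B'$ is surjective, i.e.\ that every fiber $\sH_{b'}$ is non-empty after some faithfully flat (or \'etale) base change of $B'$. Once local non-emptiness is known, the simply-transitive action of $\sP'_{b'}$ on the non-empty fiber promotes $\sH$ from a pseudo-torsor to a torsor. So the whole proof reduces to an existence statement: for each $b' \in B'$, there is (locally on $B'$) a $J^p$-local system $(P,\nabla_P)$ whose $p$-Hitchin image equals the tautological section $\tau'(b')$.

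First I would reduce the existence of a point of $\sH_{b'}$ to a surjectivity statement for the $p$-Hitchin map on $\Loc_G$ itself. The key geometric input is the abelianization picture: given any $G$-local system $(E,\nabla)$ with $h_p(E,\nabla) = b'$ and with $\Psi(\nabla)$ regular, Lemma \ref{abel:hor} shows that the canonical map $a_{E,\Psi} \colon J_{b^p} \to \Aut(E)$ is horizontal, so $(E,\nabla)$ carries an action of the $\mD_C$-group scheme $J_{b^p}$ by flat automorphisms. This reduction of structure group is exactly what manufactures an object of $\sH$ out of a $G$-local system lying over $b'$: tracing through the tautological section $\tau$ of \S \ref{tau and c} and Lemma \ref{taut:prolong}, the $J^p$-local system attached to the reduction has $p$-Hitchin image precisely $\tau'(b')$. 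Thus a point of $\Loc_G^{reg}$ over $b'$ produces a point of $\sH$ over $b'$.

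Next I would establish the surjectivity of $h_p \colon \Loc_G^{reg} \to B'$ (equivalently of $\sH \to B'$) after faithfully flat base change. The natural tool is the stack of opers: forgetting the Borel reduction gives a map $\Op_G \to \Loc_G$, and Corollary \ref{op} shows that $h_p \colon \Op_G \to B'$ is faithfully flat (finite of degree $p^{(g-1)\dim G}$ when $G$ is of adjoint type). Moreover, by the $\bG_m$-deformation argument (Lemma \ref{abel:surj}), the map $\Op_G \to \Loc_G$ factors through $\Loc_G^{reg}$, since at $\lambda = 0$ opers land in $\on{Higgs}_G^{reg}$ via the Kostant section \eqref{Op at zero}, and regularity is an open condition preserved by the $\bG_m$-family. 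Composing, $\Op_G \to B'$ being faithfully flat forces $h_p \colon \Loc_G^{reg} \to B'$ to be surjective, so after pulling back along the faithfully flat cover $\Op_G \to B'$ every fiber acquires a point.

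The main obstacle is the surjectivity input, and specifically making sure the oper construction genuinely lands in the \emph{regular} locus so that the abelianization of the previous paragraph applies. The faithful flatness of $h_p$ on $\Op_G$ is the hard arithmetic heart (Corollary \ref{op}), relying on the filtered degeneration of $h_p$ to the relative Frobenius on the Hitchin base; granting that, the passage $\Op_G \subset \Loc_G^{reg}$ via the $\bG_m$-family is the delicate step, since one must check that regularity of the $p$-curvature, verified at $\lambda = 0$, propagates to $\lambda = 1$. Everything after surjectivity is formal: a pseudo-torsor under a smooth group stack that is surjective onto the base, with simply-transitive action on non-empty fibers, is a torsor, and $\sP'$-smoothness then even yields \'etale-local triviality. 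I would therefore organize the write-up as (i) recall pseudo-torsor plus simple transitivity, (ii) reduce torsor-ness to surjectivity, (iii) prove surjectivity through $\Op_G \subset \Loc_G^{reg}$ and Corollary \ref{op}.
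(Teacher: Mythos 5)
Your overall reduction agrees with the paper's: $\sH$ is a smooth pseudo-$\sP'$-torsor over $B'$ (Corollary \ref{commutative Cartier} plus Lemma \ref{smoothness}), so everything comes down to surjectivity of $\sH\to B'$. But the route you take to surjectivity --- through opers and the inclusion $\Op_G\subset\Loc_G^{reg}$ --- has a genuine gap, and it sits exactly at the step you flag as delicate. Note first that every result you invoke for this part (the faithful flatness of $h_p$ on $\Op_G$, the lemma that opers land in $\Loc_G^{reg}$, the identification of $\Op_{G,0}$ with the Kostant section, the statement that $\Loc_G^{reg}$ is a $\sP'$-torsor) lives inside the \texttt{quash} blocks of the source, i.e.\ is deliberately excised from the compiled paper; this is the earlier argument the authors abandoned. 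The inclusion $\Op_G\subset\Loc_G^{reg}$ is false: already for $G=\on{PGL}_2$ there exist dormant opers, whose $p$-curvature vanishes identically and hence is nowhere regular. The $\bG_m$-deformation does not repair this, because the $p$-curvature of a $\lambda$-connection is $v\mapsto\nabla_\lambda(v)^p-\lambda^{p-1}\nabla_\lambda(v^{[p]})$; at $\lambda=0$ this is the $p$-power operation applied to the Higgs field, and for the Kostant section over $0\in B$ (a regular \emph{nilpotent} $\phi$) one gets $\phi^{[p]}=0$. So the condition ``$p$-curvature regular'' already fails at $\lambda=0$, and there is nothing to propagate to $\lambda=1$. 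What is true --- and what the paper only \emph{deduces} from the main theorem, remarking that it ``is not obvious from its definition'' --- is that $\Loc_G^{reg}\to B'$ is surjective; it cannot be used as an input here without circularity.

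The paper's actual proof of surjectivity is cohomological and makes no use of opers. Via Proposition \ref{appen:4-term}, the fiber $\sH_{b'}$ is the stack of splittings of a $J'_{b'}$-gerbe on $C'$ attached to $\tau'(b')$, so one must show the corresponding class $[b']\in H^2(C',J'_{b'})$ vanishes. The paper first lifts $[b']$ to $H^2(C',(J'_{b'})^0)$ and then proves Proposition \ref{vanishing}, $H^2(C,J_b^0)=0$, by passing to the generic point of the curve: the neutral component of the regular centralizer over the function field $K$ is an extension of a unipotent group by a group of multiplicative type, and the unipotent quotient is shown to be $K$-split by constructing a $\bG_m$-action (via the cocharacter $2\rho_L$ attached to the Jordan decomposition) with no nontrivial fixed weights and appealing to \cite{CGP}; this forces the vanishing of all higher cohomology over $\Spec K$, and a Leray argument transports this to $C$. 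If you want to keep your architecture, you must either substitute this vanishing theorem for the oper input, or prove directly that the locus of opers with everywhere-regular $p$-curvature still dominates $B'$ --- a nontrivial claim already over $b'=0$, where it amounts to the existence of opers with nowhere-vanishing nilpotent $p$-curvature on the given curve.
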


%%%%%%%%%%%%%%%%%%%%%%%%%%%%%%%%%%%%%%%

\subsection{The main theorem}\label{NC}
We will prove our main theorem in this subsection, assuming Theorem \ref{sH torsor} whose proof will be given in the next subsection.

We first construct
\begin{prop-dfn}\label{twist}
There is a canonical morphism of stacks over $B'$
\[\fC:\sH\times^{\sP'}\Hg'_G\to \Loc_G.\]
\end{prop-dfn}
\begin{proof}
The construction of the morphism $\fC$ is given as follows. For any $(E',\phi')\in
\Hg'_{b'}$, there is a canonical morphism $a_{E',\phi'}:J'_{b'}\to
\Aut (E',\phi')\subset \Aut(E')$ (see\eqref{actEphi}) and therefore
via pullback there is a horizontal morphism of $\mD_C$-groups
$$F^*a_{E',\phi'}=a_{F^*E',F^*\phi'}: J_{b^p}\to \Aut(F^*E'),$$
where $F^*E'$ is equipped with the canonical connection. 
Since $F^*E'$ is naturally a $\Aut(F^*E')\times G$-local system (see Example \ref{autE}),
by the formalism of induction and tensor functors in \S
\ref{some functors},
given
$(P,\nabla)\in \Loc_{J_{b^p}}$, one can form a
$G$-local system
\[((P,\nabla),(E',\phi'))\mapsto (F^*a_{E',\phi'})_*P\otimes F^*E'.\]
It is easy to see that this induces a morphism
$\fC: \Loc_{J^p}\times^{\sP'}\Hg'\to \Loc_G$. Indeed, let $P'$ be a $J'_{b'}$-torsor on $C'$. By \eqref{actEphi}, one can twist the Higgs field $(E',\phi')$ by $P'$ and let $P'\otimes (E',\phi')$ denote the new Higgs field. Then the claim amounts to
\[(F^*a_{E',\phi'})_*(P\otimes F^*P')\otimes F^*E'= (F^*a_{P'\otimes(E',\phi')})_*P\otimes F^*E',\]
which can be checked directly by definitions.

To show that $\fC$ is a morphism over $B'$, it is enough to show the following lemma. \end{proof}

\begin{lem}\label{J to G}
If $(E',\phi')\in \Hg'_{b'}$ and $(P,\nabla)\in \Loc_{J_{b^p}}$
whose $p$-curvature is $\tau'(b')\in B'_{J'_{b'}}$, then
$h_p((F^*a_{E',\phi'})_*P\otimes F^*E')=b'$.
\end{lem}
\begin{proof}Let us regard $b':C'\to \fc'_{\omega}$ as a section. Then
the question is local on $C$. So we can assume that $E'$ and $P$ are
trivialized, and denote the map $F^*a_{E',\phi'}:J_{b^p}\to
\Aut(F^*E')$ by $F^*a_{\phi'}:J_{b^p}\to G\times C$. This is exactly
the map $J_{b^p}\simeq I_{F^*\phi'}\to G\times C$.

By definition, the $p$-curvature of the $G$-local system
$F^*E'=G\times C$ is zero, and the $p$-curvature of $(P,\nabla_P)$
is $\Psi(\nabla_P)=F^*\tau(b')\in\Lie J_{b^p}\otimes
F^*\omega_{C'}$. By the diagram \eqref{p-curv for}, it is easy
to see that the $p$-curvature of $(F^*a_{E',\phi'})_*P\otimes F^*E'$ is
given by the image of $\tau(b^p)=F^*\tau'(b')$ under
$dF^*a_{\phi'}:\Lie J_{b^p}\otimes F^*\omega_{C'}\to \fg\otimes
F^*\omega_{C'}$. But by \S \ref{tau and c}, 
$dF^*a_{\phi'}(\tau(b^p))=F^*\phi'$, and therefore its image under $\fg_{\omega^p}\to \fc_{\omega^p}$ is
$b^p$. The lemma follows.
\end{proof}
\begin{remark}It is clear from the construction that $\fC$ is $\sP'$-equivariant.
\end{remark}

Our main theorem is
\begin{thm}\label{main}
The morphism $\fC$ is an isomorphism.
\end{thm}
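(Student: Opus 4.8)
The plan is to prove that $\fC\colon\sH\times^{\sP'}\Hg'_G\to\Loc_G$ is an isomorphism by combining the torsor structure already established on the source with an identification of the fibers over $B'$. Both sides map to $B'$ (the left via $h'$ on the $\Hg'_G$-factor, the right via $h_p$, compatibly by Lemma \ref{J to G}), so it suffices to prove that $\fC$ is an isomorphism over each $b'\in B'$. By Theorem \ref{intro:torsor} the stack $\sH$ is a $\sP'$-torsor, and by construction (see the Remark following Lemma \ref{J to G}) the morphism $\fC$ is $\sP'$-equivariant. Since $\Hg'_G$ itself carries the Nguyen--Ngoc--Hitchin action of $\sP'$ (cf.\ \cite{N1,N2}), the contracted product $\sH\times^{\sP'}\Hg'_G$ is, fiberwise over $b'$, a form of $\Hg'_{b'}$; trivializing the $\sP'$-torsor $\sH$ \'etale-locally on $B'$ (which is possible because $\sP'$ is smooth over $B'$, as in the Corollary after Theorem \ref{intro:nah}) identifies the source \'etale-locally with $\Hg'_G$.

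The core of the argument is therefore to show that after such a trivialization $\fC$ becomes the ``identity,'' i.e.\ that it induces a bijection on $b'$-fibers. First I would show $\fC$ is a \textbf{monomorphism}: given two points of the source with the same image in $\Loc_G$, their underlying flat $G$-bundles $(F^*a_{E',\phi'})_*P\otimes F^*E'$ agree, and one recovers $(P,\nabla_P)$ as an Isom-torsor together with $(E',\phi')$ via Cartier descent applied to the vanishing-$p$-curvature data; because the $\sP'$-action on both $\sH$ and $\Hg'_G$ is simply transitive on the relevant loci, equality of images forces equality of source points up to the diagonal $\sP'$. Next I would show $\fC$ is \textbf{surjective}, and this is where the main work lies. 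Given $(E,\nabla)\in\Loc_{G,b'}$ with $p$-curvature $\Psi$, one uses Lemma \ref{abel:hor}: the canonical map $a_{E,\Psi}\colon J_{b^p}\to\Aut(E)$ is horizontal, so the structure group of $(E,\nabla)$ reduces, in the appropriate $\mD_C$-equivariant sense, along $J_{b^p}=F^*J'_{b'}$. Taking horizontal sections (using the machinery of Appendix \ref{A} and the fact that $(J_{b^p})^\nabla=J'_{b'}$) produces the data of a $J'_{b'}$-equivariant object over $C'$, which upon comparison with the Kostant section pulls apart into a harmonic bundle $(P,\nabla_P)\in\sH_{b'}$ and a Higgs bundle $(E',\phi')\in\Hg'_{b'}$ whose image under $\fC$ is the given $(E,\nabla)$.

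The step I expect to be the main obstacle is \textbf{surjectivity}, specifically producing the reduction of $(E,\nabla)$ along $J_{b^p}$ when $\Psi$ is \emph{not} regular. For regular $p$-curvature the embedding $J_{b^p}\hookrightarrow\Aut(E)$ is closed and one abelianizes exactly as in the abelianization theorem of \cite{DG}; this handles the open locus $\Loc_G^{reg}$. The difficulty is extending across the discriminant, where $a_{E,\Psi}$ is neither injective nor surjective (cf.\ the Remark after Lemma \ref{taut:prolong}). The cleanest way around this is not to argue fiberwise at singular $b'$ but to exploit that $\fC$ is a morphism of \emph{smooth} algebraic stacks over $B'$ that is an isomorphism over the dense open $B'^{reg}$: I would verify that both source and target are smooth over $B'$ (the source because $\sH$ is a torsor under the smooth $\sP'$ and $\Hg'_G$ is smooth over $B'$; the target because $h_p$ is a smooth morphism, which follows from the tangent-space computation underlying Lemma \ref{smoothness}), check that $\fC$ is quasi-finite and representable, and then invoke the monomorphism property together with flatness to conclude $\fC$ is an open immersion whose image, being $\sP'$-stable and containing every fiber, is all of $\Loc_G$. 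In effect the global torsor statement of Theorem \ref{intro:torsor} replaces any delicate pointwise analysis at the bad locus: once $\fC$ is an equivariant map of $\sP'$-torsors' associated bundles that is an isomorphism generically and a monomorphism everywhere, an equivariance-plus-smoothness argument upgrades it to a global isomorphism.
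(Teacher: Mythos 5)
Your reduction to the regular locus does not close up. The final step asserts that an open immersion whose image is $\sP'$-stable and meets every fiber of $h_p$ must be all of $\Loc_G$, but the $\sP'$-action on $\Loc_{G,b'}$ is \emph{not} transitive: flat bundles whose $p$-curvature is a non-regular Higgs field lie outside the $\sP'$-orbit of anything in $\Loc_G^{reg}$, so an open, $\sP'$-stable substack containing $\Loc_G^{reg}$ can perfectly well miss them. (There is also no properness available to force an open dense image to be everything.) The same issue undermines your monomorphism step, which leans on simple transitivity of $\sP'$ ``on the relevant loci'': the action of $\sP'$ on $\Hg'_G$ is simply transitive only on $\Hg'^{reg}_G$, so off the regular locus the contracted product $\sH\times^{\sP'}\Hg'_G$ is not a torsor and equality of images does not formally force equality of sources. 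In short, the step you yourself flag as the main obstacle --- crossing the discriminant --- is exactly where the argument breaks, and the smoothness/open-immersion device does not substitute for an analysis there.

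The paper avoids the dichotomy entirely by writing down an inverse that makes no regularity assumption. Since $\sH^{-1}=\Loc_{J^p}(-\tau')$ is the inverse $\sP'$-torsor of $\sH$ (Corollary \ref{commutative Cartier}), it suffices to construct $\fC^{-1}:\sH^{-1}\times^{\sP'}\Loc_G\to\Hg'_G$: given $(E,\nabla)$ with $p$-curvature $\Psi$ and a $J_{b^p}$-local system $(P,\nabla_P)$ with $p$-curvature $-\tau(b^p)$, one forms $\tilde E=(a_{E,\Psi})_*P\otimes E$ using the horizontal map $a_{E,\Psi}:J_{b^p}\to\Aut(E)$, which by Proposition \ref{J} is defined for \emph{every} $\Psi$, regular or not. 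A local computation with Lemma \ref{taut:prolong} ($da_{E,\Psi}(\tau(b^p))=\Psi$, hence $da_{E,\Psi}(\Psi_P)=-\Psi$) shows the twisted connection has vanishing $p$-curvature and the induced $F$-Higgs field is horizontal, so Cartier descent produces $(E',\phi')$ on $C'$. This is the uniform mechanism your proposal is missing; if you want to salvage your outline, you would need precisely this pointwise construction at non-regular $\Psi$, at which point the detour through open immersions becomes unnecessary.
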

\begin{proof}
To prove that $\fC$ is an isomorphism, we will construct the inverse morphism. This is essentially explained in the introduction: Given a $G$-local system $(E, \nabla)$, a solution of the equation \eqref{globhe} defines a Higgs bundle $(E',\phi')$. Here we make it precise.
Namely, by Corollary \ref{commutative Cartier}, the pseudo $\sP'$-torsor $\Loc_{J^p}(-\tau')$ of $J^p$-local systems with $p$-curvature $-\tau'$ is the inverse of $\sH=\Loc_{J^p}(\tau')$. In particular, it is a $\sP'$-torsor. Therefore, it is enough to construct a morphism
\[\fC^{-1}: {\sH}^{-1}\times^{\sP'}\Loc_G\to \Hg'_G.\]

Let $(P,\nabla_P)$ be a $J_{b^p}$-local system with the $p$-curvature $-\tau(b^p)$ so that $\tilde{E}=(a_{E,\Psi})_*P\otimes E$ is a $G$-local system. In addition, there is an F-Higgs field $\tilde{\Psi}$ on $\tilde{E}$. Indeed, $(\tilde{E},\tilde\Psi)$ is the twist of $(E,\Psi)$ by the underlying $J_{b^p}$-torsor $P$. In particular, under the classical Hitchin map $h_{\omega^p}:\Hg_{G,\omega^p}\to B_{\omega^p}$, $h_{\omega^p}(\tilde E,\tilde\Psi)=b^p$.

We show that the connection on the $G$-local system $\tilde{E}$ has vanishing $p$-curvature and $\tilde\Psi$ is horizontal. Then $(\tilde{E},\tilde\Psi)=F^*(E',\psi')$, and $h'(E',\psi')=b'$. This construction provides the inverse map of $\fC$.

The question is local on $C$. We can trivialize $P$, $E$ and $\omega_C$. Then $\Psi\in \fg\otimes \mO_C, \Psi_P\in\Lie J_{b^p}$, and by Lemma \ref{taut:prolong}, $da_{E,\Psi}: \Lie J_{b^p}\to \fg\otimes\mO_C$ will send $\Psi_P$ to $-\Psi$.

Note that the $p$-curvature of $\tilde{E}$ with respect to its connection $\tilde\nabla$ (do not confuse with $\tilde\Psi$) is given by
$\Psi_P\otimes 1+1\otimes \Psi\in \Der_{\mO_C}((\mO_P\otimes\mO_E)^{J_{b^p}})$. Here we use the fact that $\Psi_P\otimes 1+1\otimes \Psi$, which is a priori an element in $\Der_{\mO_C}(\mO_P\otimes\mO_E)$, preserves $(\mO_P\otimes\mO_E)^{J_{b^p}}$. 
It is clear that under the isomorphism $(\mO_P\otimes\mO_E)^{J_{b^p}}\simeq \mO_E$ induced by the trivialization, $\Psi_P\otimes 1$ maps to $da_{E,\Psi}(\Psi_P)$. Therefore, $\Psi_P\otimes 1+1\otimes \Psi=0$.

Finally, we show that $\tilde{\Psi}$ is horizontal. Again, the question is local and we pick up the trivialization of $P$. Then under the isomorphism $\tilde{E}\simeq E$ of $G$-bundles, $\tilde{\nabla}_z=\nabla_z+A$ for some $A\in\on{Im}(a_{E,\Psi}:\Lie J_{b^p}\to \fg\otimes\mO_C)$. Then the claim follows from $[A,\Psi]=0$ and $\nabla_z(\Psi)=0$.
\end{proof}

\begin{remark}\label{ex:GLn}
It is instructive to look at the isomorphism $\fC$ in the case when $G=\GL_n$. For simplicity, we fix 
$b'\in B'$. We first describe the fibers $\Hg'_{G,b'}, \Loc_{G,b'}$ and $\sH_{b'}$ in this case.

Let $S'_{b'}\in
T^*C'$ be the spectral curve for $b'$, and $S_{b^p}\subset
T^*C'\times_{C'} C$ be the pullback of $S'_{b'}$ which fits into the following Cartesian diagram
\[\begin{CD}
S_{b^p}@>W>> S'_{b'}\\
@V\pi_{b^p}VV@VVV\\
C@>>>C'.
\end{CD}\]
Note that $S_{b^p}$ is the
spectral curve corresponding to $b^p=F^*b'$. 

Then $\Hg'_{G,b'}$ is the space of Higgs fields $(E',\phi')$ with the characteristic polynomial $b'$. By the classical BNR correspondence (cf. \cite{BNR}), such a Higgs field defines  a coherent sheaf $\mF_{(E',\phi')}$ on $S'_{b'}$. 
The space $\Loc_{G,b'}$ consist of  rank $n$
vector bundles  with a connection $(E, \nabla)$ on $C$ whose $p$-curvature $\Psi(\nabla)$ has the characteristic polynomial $b^p$. Therefore, every object in $\Loc_{G,b'}$ defines a coherent sheaf $\mF_{(E,\Psi(\nabla))}$ on $S_{b^p}$.
Finally,
$$J_{b^p}=\pi_{b^p,*}\bbG_m,$$ 
and
$\sH_{b'}$ can be regarded as the open substack of $\Loc_{G,b'}$ consisting of those $(E, \nabla)$, such that $\mL:=\mF_{(E,\Psi(\nabla))}$ is an invertible sheaf on $S_{b^p}$. Note that for such a pair $(E,\nabla)\in \sH_{b'}$, the
direct image of $\mL$ along $W:S_{b^p}\to S'_{b'}$ is locally free of
rank $p$ on $S'_{b'}$ and therefore is a splitting module of the
restriction  to ${S'_{b'}}$ of the
Azumaya algebra of the ring of crystalline differential operators $\mD_C$. As a result, the map 
$$\fC_{b'}: \sH_{b'}\times^{\sP'_{b'}}\Hg'_{G,b'}\to \Loc_{G,b'}$$
will send an object $(E',\phi')\in \Hg'_{G,b'}$, regarded as a
coherent sheaf $\mF_{(E',\phi')}$ on $S'_{b'}$, and an object $\sH_{b'}$, which defines a splitting module $W_*\mL$ of
$\mD_{C}|_{S'_{b'}}$, to the tensor product $\mF_{(E',\phi')}\otimes W_*\mL$.  
\end{remark}

\begin{remark}The description of $\sH_{b'}$ as a natural open substack of $\Loc_{G,b'}$ in the above example is a special feature for $G=\GL_n$. It is due to the existence of a natural open embedding $\sP'_{b'}=\Pic(S'_{b'})\subset \Hg'_{G,b'}$. 

In general, let $\Hg^{reg}_{G,\mL}$ denote the open substack of $\Hg_{G,\mL}$ consisting of
$(E,\phi):C\to [\fg_\mL/G]$ that factor through $C\to
[(\fg^{reg})_\mL/G]$. It is known from \cite{DG,N1} that
 $\on{Higgs}^{reg}_{\mL}$ is a $\sP_{\mL}$-torsor.  
Then a choice of the trivialization of this torsor (e.g. by a choice of Kostant section $\epsilon_{\mL^{1/2}}$) defines an open embedding of $\sP_{\mL}\to \Hg^{reg}_{G,\mL}$.

Let us define an open substack $\Loc_G^{reg}$ of $\Loc_G$ consisting of those $(E,\nabla)$ such that the $F$-Higgs field $(E,\Psi(\nabla))\in \Hg_{G,\omega^p}^{reg}$. Then $\fC$ restricts to an isomorphism 
$$\sH\times^{\sP'}{\Hg'}^{reg}_{G}\simeq \Loc_G^{reg}.$$
Therefore, a trivialization of ${\Hg'}^{reg}_G$ as the $\sP'$-torsor also defines an embedding $\sH\simeq \Loc_{G}^{reg}\subset \Loc_G$. Note that as a corollary, we see that 
the map $\Loc^{reg}_G\to B'$ is surjective, which is not obvious from its definition.
\end{remark}

\subsection{Proof of Theorem \ref{sH torsor}}\label{proof}
This subsection is devoted to the proof of theorem \ref{sH torsor}.

By Lemma \ref{smoothness}, $\sH$ is smooth over $B'$. So it is enough to show that $\sH\ra B'$ is surjective. We will fix $b'\in B'(k)$.
Let $\sG_{\tau'}$ be the $J'$-gerbe corresponding to the section $\tau'(b')\in\Gamma(C',\Lie J'\otimes\omega_{C'})$ via the four term short exact sequence in Proposition \ref{appen:4-term}.
Then $\sH_{b'}$ is just the pseudo $\Bun_{J'}$-torsor 
of splittings of $\sG_{\tau'}$. 
We denote by $[b']\in H^2(C',J_{b'}')$ the class corresponding to the gerbe $\sG_{\tau'_{b'}}$. We need to show that $[b']$ is trivial.
We begin with the following lemma.
\begin{lemma}
The class $[b']$ lies in the image of the map $H^2(C',(J'_{b'})^0)\ra H^2(C',J_{b'}')$. 
Here $(J_{b'}')^0$ is the neutral component of $J_{b'}'$.
\end{lemma}
\begin{proof}
By definition, the class $[b']$ is obtained from $\tau'(b')$ via the exact sequence in Proposition \ref{appen:4-term} applied to $J'_{b'}$. But we can also apply this sequence to $(J_{b'})^0$ to produce a class in $H^2(C',(J'_{b'})^0)$. Clearly, this new class will map to $[b']$. 
\end{proof}

\begin{prop}\label{vanishing}
For any $b\in B(k)$ we have $H^2(C,J_b^0)=0$.
\end{prop}
Clearly this proposition implies that $[b']$ is zero, and thus finish the proof of
surjectivity of $\sH\ra B'$. So it is enough to prove this proposition.

\begin{proof}
Let $K$ be the function field of $C$ and let
$j:\eta=\Spec K\ra C$ be the inclusion.
Fix $b\in B(k)$ and we write  $J_\eta^0:=J^0_b|_\eta$.
The group $J_\eta^0$ is smooth connected and commutative. Moreover,
for a choice of a trivialization of $\omega_C^\times$ at $\eta$ we have
$J_\eta^0\is (G_\eta^x)^0$, where $x\in\fg^{reg}(K)$ is the image of $b\in \fc(K)\stackrel{kos}{\to}\fg(K)$, and 
$(G_\eta^x)^0$ is the neutral component of the centralizer of $x$ in $G_\eta=G\times_kK$.
There exists the largest affine subgroup $J_{\eta,s}^0\subset J_\eta^0$ of multiplicative type such 
that the quotient $U:=J_\eta^0/J_{\eta,s}^0$ is unipotent.
Recall that a unipotent group over a field $K$ is called $K$-split if it admits 
a composition series with successive quotients $K$-isomorphic to $\bG_a$.

\begin{lemma}
The unipotent group $U$ is $K$-split.
\end{lemma}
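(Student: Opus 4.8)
The plan is to prove the sharper statement that $U$ is $K$-isomorphic to a vector group, assembled from one-parameter unipotent subgroups that are already rational over $K$. The essential point is that although the Jordan decomposition $x=x_s+x_n$ need not be defined over the imperfect field $K$, the relevant \emph{nilpotent} data is $K$-rational, and one can exponentiate it without leaving $K$.

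First I would pass to Lie algebras. Since $x$ is regular, $J_\eta^0\cong(G_\eta^x)^0$ is smooth and commutative, so $\mathfrak a:=\Lie(J_\eta^0)$ is the abelian centralizer $\fg_K^x$. Let $\mathfrak n\subset\mathfrak a$ be the set of elements nilpotent in $\fg_K$. Because $\mathfrak a$ is abelian, any two elements of $\mathfrak n$ are commuting nilpotents, so their sum is again nilpotent; hence $\mathfrak n$ is stable under addition and under scalar multiplication, i.e. $\mathfrak n$ is a $K$-subspace of $\mathfrak a$, and in particular a split vector group. Crucially, $\mathfrak n$ is $K$-rational even though $x_s,x_n$ are not: it is cut out inside the $K$-rational $\mathfrak a$ by the $K$-rational condition of lying in the nilpotent cone $\mathcal N\subset\fg$. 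Over $\bar K$ one has $J^0_{\eta,\bar K}\cong T_0\times U_0$ with $T_0$ a torus and $U_0$ unipotent, and $\mathfrak n_{\bar K}=\Lie U_0$.

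Next, for each $K$-point $v\in\mathfrak n(K)$ I would form the truncated exponential $s\mapsto\exp(sv)=\sum_{i<p}s^i v^i/i!$. Here the hypothesis $p\nmid|\rW|$ is used in an essential way: it forces $p$ to be very good, so that in a suitable faithful representation every nilpotent element of $\fg$ has nilpotency order $<p$. Then $\exp(sv)$ is a well-defined $K$-morphism valued in $G$, and since $v$ commutes with itself the assignment $s\mapsto\exp(sv)$ is a homomorphism $\bG_a\to J_\eta^0$ with unipotent image; composing with the projection $J_\eta^0\to U$ gives a $K$-split subgroup $\bG_a\to U$. I would then let $U'\subseteq U$ be the subgroup generated by all these $\bG_a$'s. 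A smooth connected unipotent group generated by $K$-split subgroups is $K$-split, so $U'$ is $K$-split; and over $\bar K$ the Lie algebras of these one-parameter subgroups span $\mathfrak n_{\bar K}=\Lie U_{\bar K}$, whence $U'_{\bar K}=U_{\bar K}$ and therefore $U'=U$. Thus $U$ is $K$-split.

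The step I expect to be the main obstacle is the second one, namely guaranteeing that the one-parameter subgroups through the $K$-rational nilpotent directions are simultaneously defined over $K$ and valued in $G$. Both assertions rest on very good characteristic, and this is exactly where $p\nmid|\rW|$ enters. To secure them I would either invoke a $G$-equivariant Springer isomorphism $\mathcal N\xrightarrow{\sim}\mathcal U$ onto the unipotent variety, defined over the prime field, or check directly that $p\nmid|\rW|$ provides a faithful representation in which nilpotents have order $<p$, so that the truncated exponential terminates and remains inside $G$. Once this is in place, the $K$-rationality of $\mathfrak n$ and the descent of the splitting from $\bar K$ to $K$ are formal, and the proposition $H^2(C,J_b^0)=0$ then follows by dévissage, using $H^2$-vanishing for tori on the curve and $H^2(C,\bG_a)=H^2(C,\mathcal O_C)=0$.
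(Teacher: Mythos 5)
There is a genuine gap, and it sits exactly where the proposal is most confident rather than where it flags difficulty. The crucial unjustified claim is that the nilpotent locus $\mathfrak n=\mathfrak a\cap\mathcal N$ is a $K$-form of $\Lie U$, i.e.\ that the $K$-rational nilpotent elements of $\mathfrak a=\Lie J_\eta^0$ span $\Lie U_{\bar K}$ after base change. Being ``cut out by the $K$-rational condition of lying in the nilpotent cone'' only produces a closed $K$-subscheme of $\mathfrak a$ whose \emph{geometric} reduction is the linear subspace $\Lie U_{\bar K}$; over the imperfect field $K=k(C)$ the reduced structure of a $K$-scheme need not descend to $K$, and a $\bar K$-subspace of $\mathfrak a_{\bar K}$ that is not defined over $K$ can have far too few $K$-points (even none besides $0$). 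Concretely, the decomposition $\mathfrak a_{\bar K}=\Lie T_0\oplus\Lie U_{\bar K}$ is induced by the Jordan decomposition $x=x_s+x_u$, which is only defined over the perfect closure of $K$; Galois descent then puts $\Lie U_{\bar K}$ over $K^{\on{perf}}$, not over $K$. The observation that $\mathfrak n(K)$ is closed under addition and scaling is correct but does not control its dimension: it could be a proper subspace (the kernel of a $p$-semilinear map such as $(x,y)\mapsto x^p-ty^p$ is the standard example of a geometrically one-dimensional locus with no nonzero $K$-points). This is precisely the phenomenon that produces $K$-wound unipotent groups, so the step being assumed is essentially equivalent in difficulty to the lemma itself; nor does first passing to $K^{\on{sep}}$ help, since $K^{\on{sep}}$ is still imperfect. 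By contrast, the secondary worry about the truncated exponential is comparatively benign (a Springer isomorphism in very good characteristic handles it), but it only becomes relevant once one has enough $K$-rational nilpotent directions, which is the unproved point.

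The paper's proof is designed to avoid exactly this rationality problem. It first reduces to the separable closure $F$ via \cite[Theorem B.3.4]{CGP} (splitness may be checked over $F$), and then, instead of trying to make $x_u$ or $\Lie U$ rational, it constructs a $\bG_m$-action on $J^0_F$ over $F$: the cocharacter $2\rho_L$ with $\on{Ad}(2\rho_L(t))(x_u)=t^2x_u$ is automatically defined over $F$ because it factors through a maximal torus, even though $x_s$ and $x_u$ themselves are not $F$-rational. The action has no nonzero fixed vectors on $\Lie U_F$, and the structure theory of wound groups (any $\bG_m$-action on an $F$-wound group is trivial) then forces the wound quotient to vanish. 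If you want to salvage your approach, you would need to independently prove that $\Lie U_{\bar K}$ descends to a $K$-subspace of $\mathfrak a$ spanned by $K$-points; I do not see how to do that without an argument of the same nature as the one in the paper.
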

\begin{proof}
Let $F$ be the separable closure of $K$. 
By \cite[Theorem B.3.4]{CGP},
it is enough to show that $U_F:=U\otimes_K F$
is $F$-split. 
We first construct a $\bG_m$-action on $U_F$ defined over $F$. 
Let $x=x_s+x_u$ be the Jordan decomposition
and $L_{\bar\eta}:=G_{\bar\eta}^{x_s}$ be the centralizer of $x_s$ in 
$G_{\bar\eta}=G\times_k\bar F$
\footnote{Notice that $x_s,x_u\in\fg(\bar F)$ are not necessary in $\fg(F)$ since $F$ is not perfect.}.
\quash{
The Levi subgroup $L_{\bar F}$ is defined over $F$, i.e. there is a levi subgroup $L\subset G$ such that $L_{\bar F}=L\otimes_F\bar F$}It is known that $x_u\in\Lie L_{\bar\eta}$ is regular nilpotent and we have 
\[J_{\bar\eta}^0:=J_{F}^0\otimes_F\bar F\is (G_{\bar\eta}^{x})^0=(L_{\bar\eta}^{x_u})^0.\]
There exists a co-character $2\rho_L:\bG_m\ra L_{\bar\eta}$
such that $\on{Ad}(2\rho_L(t))(x_u)=t^2x_u$. The co-character $2\rho_L$
defines a $\bG_m$-action $2\rho_L(\bG_m):\bG_m\times
J_{\bar\eta}^0\ra J_{\bar\eta}^0$ on $J_{\bar\eta}^0$ by conjugation action.
We claim that the action map $2\rho_L(\bG_m)$ is defined over $F$. 
Since the closed embedding $i:J_{\bar\eta}^0\hookrightarrow G_{\bar\eta}$
is defined over $F$,
it is enough to show that the composition 
\[a:\bG_m\times
J_{\bar\eta}^0\ra J_{\bar\eta}^0\hookrightarrow G_{\bar\eta}\]
is defined over $F$. But in fact, $a$ factors as
\[\bG_m\times J_{\bar\eta}^0\stackrel{2\rho_{L,G}\times i}{\longrightarrow} G_{\bar\eta}\times G_{\bar\eta}\stackrel{\on{Ad}}{\to}G_{\bar\eta}
\]
where $2\rho_{L,G}$ is the composition $2\rho_{L,G}:\bG_m\stackrel{2\rho_L}\ra L_{\bar\eta}\hookrightarrow G_{\bar\eta}$, which is defined over $F$ since it factors through a maximal 
torus of $G_{\bar\eta}$ and any maximal torus of $G_{\bar\eta}$ and  any
homomorphism between tori are defined over $F$.
Since the adjoint action $\on{Ad}$ of $G_{\bar\eta}$ is also defined over $F$, so is $a$. This finishes the construction of the promised $\bG_m$-action on $J_F^0$ over $F$.
Clearly, this 
$\bG_m$-action preserves $J_{F,s}^0$ and therefore induces a $\bG_m$-action
on $U_F\is J^0_{F}/J_{F,s}^0$.
By abuse of notation we still denote the action by $2\rho_L(\bG_m)$.

We next show that the action $2\rho_L(\bG_m)$ on $U_F$ 
has only nontrivial weights on $\Lie\!\ U_F$. It is enough to prove this statement over $\bar F$.
Let $B_L$ be the Borel subgroup of $L_{\bar\eta}$ such that $x_u\in\Lie\!\ U_L$, where $U_L$ is the unipotent radical of $B_L$.
Without loss of generality, we can assume $x_u=\sum_{\alpha\in\Delta_L} x_{\alpha}$ 
and $2\rho_L=\sum_{\alpha\in\Delta_L}\alpha^\vee$,
where $\Delta_L$ is the set of simple roots of $L_{\bar\eta}$ determined by $B_L$
and $x_\alpha$ is a non-zero element in the corresponding root space. We have $U_{\bar F}=U_L^{x_u}\subset U_L$ and the statement follows 
from the fact that the action $2\rho_L(\bG_m)$ on $U_{\bar F}$ is the restriction of 
the conjugate action of $2\rho_L$ on $U_L$ and the later action has 
only nontrivial weights on $\Lie\!\ U_L$.

We have constructed an action of $\bG_m$ on $U_F$ over $F$
with only nontrivial weights on $\Lie\!\ U_F$. Then the lemma will follow from the general result.
\end{proof}

\begin{lem}
Let $U$ be a smooth connected commutative affine unipotent group over a 
separably closed 
field $F$ of $char F=p>0$. If we have an action $\chi(\bG_m)$ of $\bG_m$ on $U$ over $F$ with only nontrivial weights on $\Lie U$, then $U$ is $F$-split. 
\end{lem}
\begin{proof}

\quash{
Since $U$ is unipotent there exists an integer $n>0$ such that $p^nU=0$.
Consider the filtration $U_i:=ker(p^i:U\stackrel{}\ra U)$ of $U$. 
Each $U_{i+1}/U_i$ is $p$-torsion commutative affine $F$-group. Moreover,
the $\chi(\bG_m)$-action
preserves each $U_i$ and induces a $\bG_m$-action on $U_{i+1}/U_i$ (which we still denote by
$\chi(\bG_m)$).
We claim that for each $i\geq 0$ we have $(U_{i+1}/U_i)^{\chi}=e$ .
To see this, let $\bar u\in(U_{i+1}/U_i)^{\chi}$. Then for any lifting $u\in U_{i+1}$ of $\bar u$
we have $p^iu\in U^{\chi}$. Indeed, we have $\chi(t)u=u+y(t)$ where  
$y(t)\in U_i$, thus $\chi(t)(p^iu)=p^i(\chi(t)u)=p^i(u+y(t))=p^iu$. By assumption 
$U^{\chi}=e$, hence $u\in U_i$ and $\bar u=e$. 
any  $p$-torsion commutative affine $F$-group
which equips a $\bG_m$-action over $F$ with no nontrivial invariant subgroups is isomorphic to a vector group, therefore
we have $U_{i+1}/U_i\is\bG_a^l$ over $F$ and it implies $U$ is $F$-split.
}
Now by \cite[Theorem B.3.4]{CGP}, 
there is a unique smooth connected $F$-split $F$-subgroup $U_{s}$
of $U$ such that $U_{w}:=U/U_{s}$ is $F$-wound (see \emph{loc. cit.} for the definition of 
$F$-wound). 
We need to show that $U_w$ is trivial.
By the uniqueness of $U_{s}$, the action $\chi(\bG_m)$
preserves $U_{s}$ and induces a $\bG_m$-action on $U_{w}$.
On the other hand, by \cite[Theorem B.4.3]{CGP}, any $\bG_m$-action 
over $F$ on an $F$-wound
smooth connected unipotent group is trivial. Thus, $\bG_m$ acts trivially on $U_{w}$ and its 
Lie algebra $\Lie\!\ U_w$ belongs to the zero weight space of $\Lie\!\ U$. Therefore, by assumption. $\Lie\!\ U_w=0$. Then $U_w$ is trivial since it
is smooth and connected.
\end{proof}

\begin{lem}\label{vanishing 2}
We have $H^i(\Spec K,J_\eta^0)=0$ for $i\geq 1$.
\end{lem}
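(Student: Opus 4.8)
The plan is to exploit the d\'evissage from the previous lemma. We have a short exact sequence of smooth commutative $K$-groups
\[
1\to T\to J_\eta^0\to U\to 1,
\]
where $T:=J_{\eta,s}^0$ is of multiplicative type and $U$ is $K$-split unipotent; moreover $T$ is a torus, since $J_\eta^0$ is smooth and connected (over $\bar K$ it is the product of a torus with a unipotent group, so its maximal subgroup of multiplicative type is connected). As all three groups are commutative, the sequence yields a long exact sequence of Galois (equivalently, \'etale) cohomology
\[
\cdots\to H^i(\Spec K,T)\to H^i(\Spec K,J_\eta^0)\to H^i(\Spec K,U)\to H^{i+1}(\Spec K,T)\to\cdots,
\]
so it suffices to prove $H^i(\Spec K,U)=0$ and $H^i(\Spec K,T)=0$ for all $i\geq 1$.

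The unipotent factor is handled immediately. Since $U$ is $K$-split it admits a composition series whose graded pieces are $K$-isomorphic to $\bG_a$, and $H^i(\Spec K,\bG_a)=0$ for $i\geq 1$ because $\bG_a$ is represented by the quasi-coherent sheaf $\calO$, whose \'etale cohomology equals its (vanishing) Zariski cohomology on the point $\Spec K$. D\'evissage along the composition series then gives $H^i(\Spec K,U)=0$ for $i\geq 1$. This is precisely the step that requires $K$-splitness: over the imperfect field $K$ a general smooth connected unipotent group may have nonzero $H^1$.

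For the torus $T$ I would first record that $\on{cd}(K)\leq 1$: for $\ell\neq p$ this is Tsen's theorem together with $\on{cd}_\ell(k)=0$ and $\on{tr.deg}_k K=1$, while for $\ell=p$ it is the general fact that a field of characteristic $p$ has $p$-cohomological dimension at most $1$. Combined with Hilbert 90 and the vanishing $\on{Br}(L)=0$ of the Brauer group of any finite separable extension $L/K$ (again Tsen, as $L$ is the function field of a smooth curve over $k$), this gives $H^i(\Spec L,\bG_m)=0$ for all $i\geq 1$; by Shapiro's lemma the same holds for every quasi-trivial torus $P=\prod_j R_{L_j/K}\bG_m$. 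Now I would deduce the vanishing for $T$ in two steps: in degrees $i\geq 2$ a restriction--corestriction argument to a finite separable splitting field of $T$ shows that $H^i(\Spec K,T)$ is torsion, hence zero since $\on{cd}(K)\leq 1$; and in degree $1$, choosing a quasi-trivial resolution $1\to S\to P\to T\to 1$ with $S$ again a torus, the segment $H^1(\Spec K,P)\to H^1(\Spec K,T)\to H^2(\Spec K,S)$ has vanishing outer terms, whence $H^1(\Spec K,T)=0$.

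Putting these together in the long exact sequence yields $H^i(\Spec K,J_\eta^0)=0$ for all $i\geq 1$. The main obstacle is the torus computation, and in particular the vanishing of $H^1(\Spec K,T)$ for a possibly non-split torus over the imperfect field $K$; the saving feature is that tori split over \emph{separable} extensions, so the imperfectness that forced the $K$-splitness argument for $U$ causes no difficulty here, and everything is ultimately controlled by the triviality of the Brauer groups of the residue function fields via Tsen's theorem.
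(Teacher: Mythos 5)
Your argument is essentially the paper's own proof: the same d\'evissage along $1\to J^0_{\eta,s}\to J^0_\eta\to U\to 1$, the same treatment of the $K$-split unipotent quotient by reduction to $\bG_a$ (whose higher cohomology on $\Spec K$ vanishes as it is quasi-coherent), and the same appeal to $\dim K\le 1$ via Tsen for the multiplicative part, which the paper disposes of in a single sentence without the extra detail you supply. The one step to tighten is ``$H^i(\Spec K,T)$ is torsion, hence zero since $\on{cd}(K)\le 1$'' for $i\ge 2$: cohomological dimension bounds kill higher cohomology with torsion \emph{coefficients}, not cohomology groups that merely happen to be torsion (e.g.\ $H^2(\widehat{\bbZ},\bbZ)\cong\bbQ/\bbZ$ while $\on{cd}(\widehat{\bbZ})=1$), so one should instead combine the restriction--corestriction bound by the degree of a separable splitting field with divisibility of $H^i(\Spec K,T)$ obtained from the Kummer sequence and $\on{cd}(K)\le 1$ applied to $T[n]$ --- but the conclusion is exactly the standard vanishing the paper itself invokes without proof.
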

\begin{proof}
We have an exact sequence 
$H^i(\Spec K,J_{\eta,s}^0)\ra H^i(\Spec K,J_\eta^0)\ra H^i(\Spec K,U)$.
Thus it suffices to show that $H^i(\Spec K,J_{\eta,s}^0)=H^i(\Spec K,U)=0$
for $i\geq 1$.
Since $\on{dim}K\leq 1$ and $J_{\eta,s}^0$ is a 
connected torus, we have $H^i(\Spec K,J_{\eta,s}^0)=0$
for $i\geq 1$. On the other hand, $U$ is $K$-split we have 
$H^i(\Spec K,U)=0$ for $i\geq 1$. We are done.
\end{proof}

Finally, we prove Proposition \ref{vanishing}.
Since $C$ is a curve we have $H^2(C,J_b^0)\is H^2(\Spec K,j_*J_\eta^0)$.
In addition, $Rj_*^iJ_\eta^0=0$ for $i\geq 1$. Indeed, the $i$-th direct image
is the sheafification of the functor $S\ra H^i(\Spec K\times_CS,J_\eta^0).$
Now $\Spec K\times_CS$ is the spectrum of a finite \'{e}tale $K$-algebra, which is a finite product of finite separable extensions of $K$.
Then the cohomology vanishes by Lemma \ref{vanishing 2}. 
By Leray spectral sequence, this gives us \[H^2(C,J_b^0)\is H^2(\Spec K,j_*J_\eta^0)\is 
H^2(\Spec K,J_\eta^0),\] 
which vanishes by Lemma \ref{vanishing 2}.
This finishes the proof.
\end{proof}
\begin{example}
Let us look at the most singular case $b=0\in B(k)$. We assume that $G$ is semisimple for simplicity.
Then $J_b=G^e\times^{\bG_m}\omega_C^\times$, where $e\in\fg^{reg}$ is regular nilpotent. 
So the group $J_\eta^0$ is isomorphic to
$J_\eta^0\is (G^e)^0\otimes_kK$.
As the group $(G^e)^0$ is 
smooth connected and unipotent over an algebraically closed field $k$, $J_\eta^0$ is $K$-split.
\end{example}

\subsection{Trivialization of $\sH_0$}\label{triv}
We briefly study trivializations of the $\sP'$-torsor $\sH$ over $0\in B'$. We first deal the case $G=\on{PGL}_2$, which is closely related to the geometry of the curve itself.
\begin{lem}
If $G=\on{PGL}_2$, then $J'_0\simeq \omega^{-1}_{C'}$, where $\omega^{-1}_{C'}$ is regarded as an affine vector group over $C'$.
\end{lem}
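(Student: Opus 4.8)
The plan is to reduce the statement to the computation of a single $\bbG_m$-weight. Recall $J=kos^{*}I$, so the fibre $J_0=I_{kos(0)}$ is the centralizer in $G=\on{PGL}_2$ of the regular nilpotent $e:=kos(0)$. First I would compute this centralizer by hand: in $\on{SL}_2$ one checks directly that the centralizer of $e=\left(\begin{smallmatrix}0&1\\0&0\end{smallmatrix}\right)$ consists of the matrices $\pm\left(\begin{smallmatrix}1&b\\0&1\end{smallmatrix}\right)$, i.e. is $\{\pm 1\}\times U$ with $U\cong\bbG_a$ the upper unipotent subgroup; passing to the adjoint quotient kills the central $\{\pm 1\}$, so $J_0\cong\bbG_a$. (Here $p\neq 2$ since $p\nmid|\rW|$, so this group is smooth and connected.) In particular $J_0$ is a vector group, hence so is its $\omega_{C'}$-twist $J'_0$; and a vector group over $C'$ is determined by its Lie algebra, so it is enough to prove $\Lie J'_0\cong\omega_{C'}^{-1}$.

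Second I would invoke the twisting formula of \S\ref{tau and c}, namely $(\Lie J)\times^{\bbG_m}\mL^\times=\Lie(J_\mL)\otimes\mL$. Applying it at $0\in\fc$ with $\mL=\omega_{C'}$ gives $\Lie J'_0=\bigl[(\Lie J_0)\times^{\bbG_m}\omega_{C'}^\times\bigr]\otimes\omega_{C'}^{-1}$, so everything reduces to the single $\bbG_m$-weight $w$ of the one-dimensional fixed fibre $\Lie J_0$ over $0$: if this weight is $w$, then $\Lie J'_0\cong\omega_{C'}^{\,w-1}$. Thus it remains only to show $w=0$.

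The weight computation is the heart of the matter, and here I would use the tautological section $\tau\colon\fc\to\Lie J$. For every $c\in\fc=\bbA^1$ the element $kos(c)$ is regular (regular semisimple for $c\neq 0$, regular nilpotent for $c=0$), so its centralizer $\frakz_\fg(kos(c))$ is one-dimensional and spanned by $kos(c)=\tau(c)$; hence $\tau$ is nowhere vanishing and trivializes the line bundle $\Lie J$, and in particular $\tau(0)=e$ spans the fibre $\Lie J_0$. Now $0\in\fc$ is fixed by the $\bbG_m$-action, and $\tau$ is $\bbG_m$-equivariant by \S\ref{tau and c}; therefore $\tau(0)$ is a $\bbG_m$-fixed vector. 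Since $\tau(0)$ spans the one-dimensional fibre $\Lie J_0$, the $\bbG_m$-action on $\Lie J_0$ is trivial, i.e. $w=0$. Substituting into the formula yields $\Lie J'_0\cong\omega_{C'}^{-1}$, and hence $J'_0\cong\omega_{C'}^{-1}$ as a vector group, as claimed.

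The main obstacle is bookkeeping the $\bbG_m$-equivariant structure and the sign: the extra $\otimes\mL$ in the twisting formula of \S\ref{tau and c} is precisely what turns the trivial weight on $\Lie J_0$ into $\omega_{C'}^{-1}$ rather than $\mO_{C'}$ or $\omega_{C'}$, so one must be careful to use the same $\bbG_m$-action (the one for which $\tau$ is equivariant) on both sides of the identity. A secondary point to verify is that $J_0$ really is a vector group and not a more exotic form of $\bbG_a$; this is guaranteed by the explicit identification $J_0\cong\bbG_a$ above together with the smoothness and connectedness of the centralizer in characteristic $p\neq 2$.
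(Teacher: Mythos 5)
The paper states this lemma without proof (it is followed immediately by the deduction about $\sP'_0$), so there is no argument of the authors' to compare against; your proposal supplies a correct and complete justification. Your route --- identify $J_0$ as the centralizer of the regular nilpotent $e=kos(0)$, observe it is $\bbG_a$ for $\on{PGL}_2$ when $p\neq 2$, and then pin down the $\bbG_m$-equivariant structure via the tautological section --- is exactly in the spirit of \S\ref{tau and c}, and the key step is right: since $0\in\fc$ is $\bbG_m$-fixed and $\tau$ is $\bbG_m$-equivariant, the nonzero vector $\tau(0)=e$ spanning the line $\Lie J_0=\fg^e$ is fixed, so the weight for the \S\ref{tau and c} action is $0$, and the formula $(\Lie J)\times^{\bbG_m}\mL^\times=\Lie(J_{\mL})\otimes\mL$ then forces $\Lie J'_0\simeq\omega_{C'}^{-1}$. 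This agrees with the independent check via spectral curves for $\GL_2$ (Remark \ref{ex:GLn}): over $0$ one gets $\pi_*\bbG_m$ for the first infinitesimal neighborhood of $C'$ in $T^*C'$, namely $\bbG_m\times\omega_{C'}^{-1}$, whose quotient by the central $\bbG_m$ is $\omega_{C'}^{-1}$.

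Two small points you should make explicit. First, the reduction ``a vector group is determined by its Lie algebra'' presupposes that $J'_0$ \emph{is} a vector group, i.e.\ that the $\bbG_m$-action on $J_0\simeq\bbG_a$ used to form the twist is linear; this is not automatic in characteristic $p$ (endomorphisms of $\bbG_a$ involve Frobenius twists), but it holds here either because $\on{Aut}_{\mathrm{gp}}(\bbG_{a,k})=\bbG_m$ over a field, or simply because the action is visibly $b\mapsto t^{-1}b$ by conjugating the unipotent matrices by $\on{diag}(t,1)$. Second, your identification of $Z_{\on{PGL}_2}(e)$ by computing in $\on{SL}_2$ and ``killing $\pm1$'' should be phrased as a quotient of group schemes (every point of $\on{PGL}_2$ centralizing $e$ lifts fppf-locally to a centralizing point of $\on{SL}_2$, since the condition is insensitive to the center); computing directly in $\GL_2/\bbG_m$ avoids the issue. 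Neither point affects the correctness of the argument.
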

As a result, $\sP'_0\simeq H^1(C',\omega_{C'}^{-1})$, and $\sH_0$ is an $H^1(C',\omega_{C'}^{-1})$-torsor. In addition, for $b'=0$, $J_{b^p}=F^*\omega_{C'}^{-1}=\omega_C^{-p}$. So we have
\begin{lem}\label{ex H0}
The stack $\sH_0$ consists of  $F^*\omega_{C'}^{-1}$-torsors $E$ equipped with a connection $\nabla$, such that $h_p(E,\nabla)=1$ as an element in $\Gamma(C', \Lie\!\ J'_0\otimes\omega_{C'})=\Gamma(C',\omega_{C'}^{-1}\otimes\omega_{C'})=\Gamma(C',\mO_{C'})$.
\end{lem}

\begin{lem}\label{PGL_2}
Under the above identification, the $\sP'_0$-torsor $\sH_0$ is canonically isomorphic to the $H^1(C',\omega_{C'}^{-1})$-torsor of liftings of $C'$ to $W_2(k)$.
\end{lem}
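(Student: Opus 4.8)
The plan is to produce a canonical, $H^1(C',\omega_{C'}^{-1})$-equivariant map from the torsor of $W_2(k)$-liftings of $C'$ to $\sH_0$, since any map of torsors that is equivariant for the structure group is automatically an isomorphism. The mechanism will be the construction of splitting modules from a lift of Frobenius, as in \cite[Theorem 2.8]{OV}; note that the introduction already asserts that a lifting of $C'$ yields a canonical trivialization of $\sH|_0$, so the content of the lemma is precisely that this assignment respects the two torsor structures. First I would pin down the object. Since $G=\on{PGL}_2$ and $b'=0$, the Kostant section sends $0\in\fc$ to the regular nilpotent $e\in\fg^{reg}$, so by the definition of the tautological section in \S\ref{tau and c} the value $\tau'(0)$ is the image of $e$; under the identifications $\Lie J'_0=\omega_{C'}^{-1}$ and $\Lie J'_0\otimes\omega_{C'}=\mO_{C'}$ from the preceding lemma, $\tau'(0)$ is the nowhere-vanishing unit section $1\in\Gamma(C',\mO_{C'})$. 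Thus an object of $\sH_0=\Loc_{J_{0^p}}(\tau'(0))$ is a $J_{0^p}$-local system, with $J_{0^p}=F^*\omega_{C'}^{-1}$, whose $p$-curvature is $F^*\tau'(0)$.

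Next I would reinterpret this data concretely. Using the rank-two spectral picture of Remark \ref{ex:GLn}, the spectral curve of $0\in B'$ is the first infinitesimal neighborhood $S'_0$ of $C'$ in $T^*C'$, and a $J_{0^p}$-local system with $p$-curvature $\tau'(0)$ is exactly a splitting module of the Azumaya algebra $\mD_C|_{S'_0}$; here the normalization $\tau'(0)=1$ is what guarantees that the associated $F$-Higgs field is regular, so that the spectral description applies. This reduces the lemma to comparing splitting modules of $\mD_C|_{S'_0}$ with $W_2(k)$-liftings of $C'$.

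I would then construct the map. Given a lifting $\tilde C'$ of $C'$ to $W_2(k)$, I would choose an affine cover together with local lifts of the relative Frobenius; \cite[Theorem 2.8]{OV} produces the $\mD_C$-module $\mB_{\mX/S}$ on $S'_0$, which splits $\mD_C|_{S'_0}$ and hence, by the previous paragraph, gives a point of $\sH_0$. I would check that the auxiliary local Frobenius lifts affect $\mB_{\mX/S}$ only through canonical isomorphisms, so that the resulting point depends only on $\tilde C'$, yielding a well-defined map from liftings to $\sH_0$. For equivariance, two liftings $\tilde C'_1,\tilde C'_2$ differ by the \v{C}ech class in $H^1(C',\omega_{C'}^{-1})$ given by the differences of the lifted Frobenii divided by $p$, and I would verify that the corresponding splitting modules differ by the action of exactly this class under the $\sP'_0\simeq H^1(C',\omega_{C'}^{-1})$-action on $\sH_0$, because the gluing $1$-cocycle of $\mB_{\mX/S}$ is precisely the Deligne--Illusie cocycle classifying the lift.

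The main obstacle is this last matching. One must trace the identification $\Lie J'_0=\omega_{C'}^{-1}$ through the $\bG_m$-twist of \S\ref{tau and c} and check on the nose that the two $H^1(C',\omega_{C'}^{-1})$-actions agree: the one coming from tensoring a harmonic bundle with the Frobenius pullback of a $J'_0$-torsor, and the one coming from changing the $W_2(k)$-lift. Equivalently, in the gerbe language used in the proof of the torsor theorem, one must show that the $J'_0$-gerbe $\sG_{\tau'(0)}$ built from $\tau'(0)$ via the four-term sequence of Proposition \ref{appen:4-term} is canonically the gerbe of liftings of $C'$; comparing with the $\bG_m$-case of \cite{IL} is what pins down the normalization and identifies the connecting maps, so that the two torsors of splittings coincide.
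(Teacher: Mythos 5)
Your overall skeleton --- build a canonical $H^1(C',\omega_{C'}^{-1})$-equivariant map from the torsor of $W_2(k)$-liftings to $\sH_0$ and conclude because an equivariant map of torsors is automatically an isomorphism --- is exactly the paper's. The mechanism you propose for producing that map, however, is different and has a concrete mismatch: the reinterpretation of an object of $\sH_0$ as a splitting module of $\mD_C|_{S'_0}$ is the $\GL_n$ story of Remark \ref{ex:GLn}, whereas the lemma is stated for $G=\on{PGL}_2$, where $J'_0\simeq\omega_{C'}^{-1}$ is a vector group and an object of $\sH_0$ is simply an $F^*\omega_{C'}^{-1}$-torsor with flat connection whose $p$-curvature is the unit section of $F^*\omega_{C'}^{-1}\otimes F^*\omega_{C'}$. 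There is no Azumaya algebra attached to this vector group; to invoke $\mB_{\mX/S}$ you would have to pass to $\GL_2$ (or $\SL_2$) and then descend, which is extra work the paper never needs.

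The second and more serious point is that you correctly isolate the crux --- identifying the $\omega_{C'}^{-1}$-gerbe $\sG_{\tau'(0)}$ with the gerbe of liftings, equivalently matching the two $H^1(C',\omega_{C'}^{-1})$-actions --- but leave it as ``one must check.'' The paper resolves it in one stroke, bypassing splitting modules entirely: by \cite[\S 1]{OV}, a lifting $\tC'$ of $C'$ to $W_2(k)$ determines an extension $0\to F^*\omega_{C'}^{-1}\to\mE\stackrel{\pi}{\to}\mO_C\to 0$ of $\mD_C$-modules whose $p$-curvature is the composite $\mE\to\mO_C\simeq F^*\omega_{C'}^{-1}\otimes F^*\omega_{C'}\subset\mE\otimes F^*\omega_{C'}$; then $\pi^{-1}(1)$, which is the torsor of liftings of Frobenius, is an $F^*\omega_{C'}^{-1}$-torsor with connection and $p$-curvature exactly $F^*\tau'(0)$, hence an object of $\sH_0$. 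With this construction the equivariance is visible directly, since both actions are given by twisting by an $\omega_{C'}^{-1}$-torsor, and no \v{C}ech-cocycle computation is required. Your Deligne--Illusie cocycle comparison would presumably also succeed if carried out, but as written the proposal defers the decisive step, and the detour through spectral curves makes that step harder, not easier, for $\on{PGL}_2$.
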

\begin{proof}This is essentially a reformulation of \cite[Theorem 4.5]{OV} in the curve case. Given a lifting $\tC'$ of $C'$ to $W_2(k)$, we construct an object in $\sH_0$ as follows.
As explained in \cite[\S 1]{OV}, a lifting of $C'$ to $W_2(k)$ defines an extension of  $\mO_{C}$ by $F^*\omega^{-1}_{C'}$ as $\mD_C$-modules
\[0\to F^*\omega^{-1}_{C'}\to \mE\stackrel{\pi}{\to} \mO_C\to 0 .\]
such that the $p$-curvature of $\mE$ is given by
\[\mE\to \mO_C\simeq F^*\omega_{C'}^{-1}\otimes F^*\omega_{C'}\subset \mE\otimes F^*\omega_{C'}.\]
Then $\pi^{-1}(1)$ is an $F^*\omega_{C'}^{-1}$-torsor on $C$, equipped with a connection\footnote{This is in fact the torsor of liftings of the Frobenius $F:C\to C'$ to $W_2(k)$.}. By Lemma \ref{ex H0}, this defines an object of $\sH_0$. 

This construction induces a morphism from the $H^1(C',\omega_{C'}^{-1})$-torsor of liftings of $C'$ to $W_2(k)$ to the $\sP'_0$-torsor $\sH_0$, which clearly intertwines the action of $H^1(C',\omega_{C'}^{-1})\simeq \sP'_0$.  
\end{proof}
As a corollary, in the case $G=\on{PGL}_2$ a choice of the lifting of $C'$ to $W_2(k)$ gives rise to a trivialization of $\sH_0$. The same is true for $G=\SL_2$, as $J'_0\simeq \omega^{-1}_{C'}\times\mu_2$. 
Now for a general reductive group $G$, we fix a principal $\varphi: \SL_2\to G$. Via pushout, we see
that a lifting of $C'$ to $W_2(k)$ gives rise to a trivialization of $\sH_0$ as well.
\begin{remark}\label{other lifting}
If the group 
$G$ is bigger than $\on{SL}_2$, then there are trivializations of $\sH_0$
that do not come from liftings of $C'$ to $W_2(k)$, as the dimension of 
$\sH_0$ is bigger than the dimension of the  $H^1(C',\omega_{C'}^{-1})$-torsor of liftings of $C'$ to $W_2(k)$. Therefore, for group $G$ bigger than $\on{SL}_2$, there are some Simpson correspondences that do not arise from the construction in \cite{OV}.
\end{remark}

%%%%%%%%%%%%%%%%%%%%%%%%%%

\appendix

\section{The Stack of $\mG$-local systems}\label{A}
In this appendix we discuss the notion of (de Rham) $\mG$-local systems and
their $p$-curvatures. We will fix a smooth morphism $X\to S$ of
noetherian schemes (however all discussions carry through without change if $X$ is a smooth Deligne-Mumford stack), and a smooth affine group scheme $\mG$ over $X$. We do not assume that $\mG$ is constant or is fiberwise connected. The main example is
the regular centralizer group scheme $\mJ_{b^p}$ as in the note. 

For any scheme $X\to S$ smooth over $S$, we
denote by $T_{X/S}$ (resp. $\Omega_{X/S}$) its tangent (resp.
cotangent) sheaf  relative to $S$, or sometimes by $T_X$ (resp.
$\Omega_X$) if no confusion will likely arise. We denote by $\mD_{X/S}$ (or by $\mD_X$ for simplicity) the sheaf of crystalline differential operators on $X$ as in \cite{BB,OV}.

%%%%%%%%%%%%%%%%%%%

\subsection{Connections on $\mG$-torsors}\label{quasi-coh}
In order to talk about a connection on a $\mG$-torsor, we need to
assume that $\mG$ itself is a $\mD_{X/S}$-group scheme. I.e. it is equipped with a flat connection
\[\nabla_{\mG}:\mO_{\mG}\ra\mO_{\mG}\otimes_{\mO_X}\Omega_{X/S}\]
which is compatible with the unit, the multiplication and the
co-multiplication on $\mO_{\mG}$. Equivalently, let
$\Delta:X\ra X\times _SX$ be the diagonal embedding and let
$\Delta^{(1)}$ be the first  infinitesimal neighborhood of $\Delta$ with two natural projections $p_1,p_2: \Delta^{(1)}\to X$. Then a
flat connection on $\mG$ is an isomorphism of group schemes
$\nabla_{\mG}:p_1^*(\mG)\is p_2^*(\mG)$ that restricts to the identity
map on $\Delta$ and that satisfies the usual cocycle condition when pull back to the first infinitesimal neighborhood $\Delta_3^{(1)}$ of the main diagonal $\Delta_3: X\to X\times X\times X$.

Given a $\mG$-torsor $E$, a flat connection on $E$ is a $\mD_{X/S}$-scheme structure on $E$ that is compatible with the $\mG$-action. Explicitly, it is a flat connection $\nabla:\mO_E\ra\mO_E\otimes_{\mO_X}\Omega_{X/S}$,
which is compatible with the multiplication of $\mO_E$ and 
fits into the following commutative diagram
$$\xymatrix{\mO_E\ar[d]^{\nabla}\ar[r]^a&\mO_{E}\otimes\mO_{\mG}
\ar[d]^{\nabla\otimes 1+1\otimes\nabla_{\mG}}
\\\mO_E\otimes\Omega_{X/S}\ar[r]^{a}&
(\mO_E\otimes\mO_{\mG})\otimes\Omega_{X/S}}$$ where
$a:\mO_E\ra\mO_E\otimes\mO_{\mG}$ is the co-action map. Equivalently, a flat connection on $E$ is an isomorphism $\nabla:p_1^*(E)\is p_2^*(E)$ of $p_1^*\mG\is p_2^*\mG$-torsors on $\Delta^{(1)}$ that restricts to the identity
on $\Delta$ and that satisfies the usual cocycle condition on $\Delta^{(1)}_3$.

We denote by $\Loc_{\mG}$ the stack of $\mG$-torsors with flat
connections (or $\mG$-local systems). Note that the following
discussions do not require the representability of this stack.

\begin{example}
In the case of constant group scheme $\mG=G\times_S X$, there is a
canonical connection on $\mO_{\mG}=\mO_G\otimes_{\mO_S} \mO_X$ coming from
$\mO_X$. The above definition then reduces to the standard one.
\end{example}

\subsection{Lie algebroid definition}\label{Lie algebroid}
Here is an equivalence definition. Let $\mG$ be a smooth affine
$\mD_{X/S}$-group scheme as before. Let $E$ be a $\mG$-torsor. Let
us denote by $\widetilde{T}_E$ the Lie algebroid of infinitesimal
symmetry of $E$: a section of $\widetilde{T}_E$ is a pair $(v,\tilde
v)$, where $v\in T_{X/S}$ and $\tilde v\in T_{E}$ is a vector field
on $E$ such that:
\begin{enumerate}
\item The restriction on $\tilde v$ to $\mO_X\subset \mO_E$ is equal to
$v$ (i.e. $\tilde v$ is a lifting of $v$).

\item $\tilde v$ is $\mG$-invariant, i.e. the following diagram
commutes
$$\xymatrix{\mO_E\ar[r]^{a\ \ \ }\ar[d]^{\tilde v}&\mO_E\otimes\mO_{\mG}\ar[d]^{\tilde v
\otimes 1+ 1\otimes\nabla_{\mG}(v)}
\\\mO_E\ar[r]^{a\ \ \ }&\mO_E\otimes\mO_{\mG}}$$
\end{enumerate}

Let $\sigma:\widetilde{T}_{E}\ra T_{X/S}$ be the projection map
$(v,\tilde v)\ra v$. There is the following exact sequence
\begin{equation}\label{Atiyah algebroid}
0\to \ad(E)\to \widetilde{T}_E\stackrel{\sigma}{\to} T_{X/S}.
\end{equation}

A connection $\nabla$ on $E$ is a splitting of this exact
sequence, i.e. $\nabla$ is a map $\nabla:T_{X/S}\ra \widetilde{T}_E$
such that $\sigma\circ\nabla=\id$. If in addition $\nabla$ is a Lie
algebroid homomorphism, we say that $\nabla$ is a flat connection.

%%%%%%%%%%%%%%%%%%%%%%%%

\subsection{Connections on the trivial $\mG$-torsor}\label{conn on triv}
Let $E=E^0$ be the trivial $\mG$-torsor. Then it is equipped with a canonical flat connection coming from $\nabla_\mG$, denoted by $\nabla^0$. Then by \S \ref{Lie algebroid}, the sheaf of connections on $E^0$ is isomorphic to $\Lie \mG\otimes\Omega_{X/S}$ via $\nabla\mapsto \nabla-\nabla^0$. We denote the subsheaf of flat connections on $E^0$ by $(\Lie \mG\otimes\Omega_{X/S})^{cl}$. If $\mG=G\times X$ is constant, then $(\Lie \mG\otimes\Omega_{X/S})^{cl}=\Lie G\otimes \calZ_{X/S}$, where $\calZ_{X/S}\subset\Omega_{X/S}$ is the sheaf of closed one-forms.

There is always the following map of
sheaves on $X$
\[d\log: \mG\to (\Lie \mG\otimes \Omega_{X/S})^{cl},\]
defined as follows. We regard $g\in\calG$ as an element in $\calH
om_{\mO_X}(\calO_\mG,\mO_X)$, which carries on a natural connection,
still denoted by $\nabla_\mG$. Let $\calI=\ker g\subset\mO_\mG$.
Then it is easy to see that $\nabla_\mG (g)\in \calH
om(\calO_\mG,\mO)\otimes\Omega_{X/S}$ annihilates $\mI_g^2$, and therefore
induces a map
\[\nabla_\mG (g): \mI_g/\mI_g^2\to \Omega_{X/S}.\]
Then $g^{-1}\nabla_\mG(g)$ can be regarded as an
element in $\Lie \mG\otimes\Omega_{X/S}$, and $\nabla^0+ d\log(g)$ defines a connection on $E^0$.
Note that $\mG$ acts on $E^0$, and therefore on $\tilde{T}_{E^0}$. It is easy to see that $g: (E^0, \nabla^0+d\log(g))\simeq (E^0,\nabla^0)$ is an isomorphism. In particular, $\nabla^0+d\log(g)$ is a flat connection, i.e. $d\log(g)\in (\Lie\mG\otimes\Omega_{X/S})^{cl}$.

\subsection{Bitorsors and connection on bitorsors}\label{bitorsor}
Let $\mG_1$ and $\mG_2$ be two smooth affine  group schemes. A
$(\mG_1\times\mG_2)$-bitorsor  on $X$ is a scheme $E$ on $X$ with a
$(\mG_1\times\mG_2)$-action that makes $E$ into a left
$\mG_1$-torsor and a right $\mG_2$-torsor. If $\mG_1$ and $\mG_2$ are $\mD_{X/S}$ group schemes, we can similarly define the
notion of a flat connection on a $(\mG_1\times\mG_2)$-bitorsor $E$,
i.e. an isomorphism $\nabla:p_1^*(E)\is
p_2^*(E)$ of bitorsors satisfying the usual conditions as before.

We denote by $\Loc_{\mG_1\times\mG_2}$ the stack of
$(\mG_1\times\mG_2)$-bitorsors with flat connections.

\begin{example}\label{autE}
Let $\mG$ be a smooth affine $\mD_{X/S}$-group scheme over $X$ and let
$E\in\Loc_{\mG}$. Let $\Aut(E)$ be the group scheme of automorphisms
of $E$ (as a $\mG$-torsor). Then $E$ has a natural
$(\Aut(E)\times\mG)$-bitorsor structure. In addition, the group
scheme $\Aut(E)$ has a canonical flat connection and
$E\in\Loc_{\Aut(E)\times\mG}$. To see this, observe that
$\Aut(E)\simeq E\times^{\mG}\mG$ and for any $Y\ra X$ we have
$\Aut(E)_Y\is E_Y\times^{\mG_Y}\mG_Y$. Thus, the connections on $E$
and on $\mG$ induce an isomorphism $p_1^*\Aut(E)\is p^*_2\Aut(E)$ on $\Delta^{(1)}$, which
defines a connection on $\Aut(E)$. It is
clear from the above construction that $E\in\Loc_{\Aut(E)\times\mG}$.
\end{example}

%%%%%%%%%%%%%%%

\quash{
\subsection{The horizontal sections of $\Aut(E)$}\label{hori autE}
Here we give an example of calculation of the scheme of horizontal
sections, which is used in main body of the note. 

Let $G$ be a smooth algebraic group over $k$ and Let $(E,\nabla)$ be
a $G$-local system on $X/S$. By \ref{autE}, $\Aut(E)$ is a
$\mD_{X/S}$-group. As before, we omit the base $S$ from the
notations. We show that the scheme of horizontal sections of $\Aut(E)$ is what one expects. Namely, we define a sheaf $\underline\Aut(E,\nabla)$ on $X'_{et}$
by assigning every \'{e}tale map $f:U'\to X'$,
\[\underline{\Aut}(E,\nabla)(U')=\Aut(f^*E,f^*\nabla).\]

\begin{lem}\label{tech2}
We have
\[\Aut(E)^\nabla=\underline\Aut(E,\nabla)\]
as \'{e}tale sheaves on $X'_{et}$.
\end{lem}
\begin{proof}
Let $(f':U'\ra X')\in X'_{et}$. we have a natural inclusion
$$i:\on{Aut}(E)^{\nabla}(U^{'})\hookrightarrow\on{Aut}(E)(U).$$
Our goal is to show that $i$ factors through
$\underline{\on{Aut}}(E,\nabla)(U')$ and induces an isomorphism with
$\underline{\on{Aut}}(E,\nabla)(U')$.

Let
$$a:\on{Aut}(E)\times E\ra E$$
be the action map and let
$a^*:\mO_{E}\ra\mO_{E}\otimes\mO_{\on{Aut}(E)}$ be the corresponding
coaction map. Here and in the sequel, all the sheaves are regarded as $\mO_X$-modules and the tensor product is over $\mO_X$.

Under the identification  $\on{Aut}(E)(U)\is
\on{Hom}_{\calO_U\on{-alg}} (\mO_{\on{ad}(E_U)},\mO_{U})$,  an
element $g\in\on{Aut}(E)(U)$
 belongs to $\on{Aut}(E,\nabla)(U^{'})$ if and only if
the map
 $a(g):=g\circ a^*:\mO_{E_U}\ra\mO_{E_U}$ preserves connection, i.e. the following
 diagram commutes
\begin{align}
\xymatrix{\mO_{E_U}\ar[d]^{\nabla}\ar[r]^{a(g)}&
\mO_{E_U}\ar[d]^{\nabla}
\\
\mO_{E_U}\otimes\Omega_{U}\ar[r]^{a(g)\otimes 1}&
\mO_{E_U}\otimes\Omega_{U}}.
\end{align}

Therefore we need to show that an element
$g\in\on{Hom}_{\calO_U\on{-alg}} (\mO_{\on{ad}(E_U)},\mO_{U})$ lies
in $\on{Hom}_{\mD_{U}\on{-alg}} (\mO_{\on{ad}(E_U)},\mO_{U})$ if and
only if  $a(g)$ preserves connection. We can assume that $U=X$.

\medskip

\noindent\emph{Only if.} By the definition of the connection on
$\Aut(E)$, the map $a$ is horizontal and therefore, the following
diagram is commutative
\begin{align}
\xymatrix{\mO_{E}\ar[d]^{\nabla}\ar[r]^{a^*}&
\mO_{E}\otimes\mO_{\on{Aut}(E)}\ar[d]^{\nabla\otimes
1+1\otimes\nabla}
\\
\mO_{E}\otimes\Omega_{X}\ar[r]&
(\mO_{E}\otimes\mO_{\on{Aut}(E)}\otimes\Omega_{X}}
\end{align}

Let $g\in\on{Hom}_{\mD_{X\on{-alg}}} (\mO_{\on{Aut}(E)},\mO_{X})$.
Using the fact that $g$ is a $\mD_X$-morphism, we have the
following commutative diagram
\begin{align}
\xymatrix{\mO_{E}\otimes\mO_{\on{Aut}(E)}\ar[d]^ {\nabla\otimes
1+1\otimes\nabla}\ar[r]^{1\otimes g}&
\mO_{E}\otimes\mO_{X}\ar[d]^{\nabla\otimes 1+1\otimes d}
\\
(\mO_{E}\otimes\mO_{\on{Aut}(E)})\otimes \Omega_{X}\ar[r]^{1\otimes
g}& (\mO_{E}\otimes\mO_{X})\otimes\Omega_{X}}
\end{align}

Under the isomorphism $\mO_{E}\otimes\mO_{X}=\mO_{E}$, the map
$\nabla\otimes 1+1\otimes d$ becomes $\nabla$. Altogether we obtain
the following commutative diagram
\begin{align}
\xymatrix{\mO_{E}\ar[d]^{\nabla}\ar[r]^{a(g)}&
\mO_{E}\ar[d]^{\nabla}
\\
\mO_{E}\otimes\Omega_{X}\ar[r]^{a(g)\otimes 1}&
\mO_{E}\otimes\Omega_{X}},
\end{align}
which shows that $a(g)\in\on{Aut}(E,\nabla)(X)$.

\medskip

\noindent\emph{If.} Let $g\in\on{Hom}_{\calO_X\on{-alg}}
(\mO_{\on{Aut}(E)},\mO_{X})=\on{Aut}(E)(X)$. Let
$a(g):\mO_{E}\ra\mO_{E}$ be the map constructed in (6). The section
$g$ can be reconstructed from $a(g)$ by the following procedure: Let
$t_1:E\ra E\in\on{Aut}(E)$ be the map corresponds to $a(g)$.
Consider $t_2:E\ra G\times E$ given by $t_2(s)=(g_s,t_1(s))$ where
$g_s$ is the unique element in $G$ such that $g_s\cdot s=t_1(s)$.
The map $t_2$ is $G$-equivariant and it induced a map $t_3:X=E/G\ra
(G\times E)/G=\on{Aut}(E)$ and $g$ is the corresponding map on
sheaves of algebras. As we assume that the section
$a(g):\mO_{E}\ra\mO_{E}$ preserves connection, above reconstruction
implies that $g$ preserves connection, i.e.
$g\in\on{Hom}_{\mD_{X}\on{-alg}} (\mO_{\on{Aut}(E)},\mO_{X})$. This
finished the proof.
\end{proof}
}

%%%%%%%%%%%%%

\subsection{Some functors}\label{some functors}
Let $f:Y\ra X$ be morphism between smooth schemes and let $\mG_X$ be
a smooth affine $\mD_{X/S}$-group scheme on $X$. Let
$\mG_Y:=f^*\mG_X$. Then $\mG_Y$ is a $\mD_{Y/S}$-group scheme and the pullback of $f$ defines a functor
\begin{equation}\label{pullback}
f^*:\Loc_{\mG_X}\ra\Loc_{\mG_Y},
\end{equation}
sometimes called the pullback functor.

Assume that $\mG_1$ and $\mG_2$ are two smooth affine $\mD_{X/S}$-group schemes  and $h:\mG_1\ra\mG_2$ is a horizontal group scheme homomorphism. For any
$(E,\nabla)\in\Loc_{\mG_1}$ we can form the usual induced $\mG_2$-torsor
$h_*E=E\times^{\mG_1,h}\mG_2$.  Moreover, there is a canonical isomorphism $p_1^*h_*E\simeq p_2^*h_*E$ on $\Delta^{(1)}$
which defines a
connection $h_*\nabla$ on $h_*E$. Thus the assignment $(E,\nabla)\ra
(h_*E,h_*\nabla)$ defines a functor
\begin{equation}\label{ind}
h_*:\Loc_{\mG_1}\ra\Loc_{\mG_2},
\end{equation}
sometimes called the induction functor.

Let $\mG_1$ and $\mG_2$ be two $\mD_{X/S}$-groups. Let
$P\in\Loc_{\mG_1}$ and $E\in\Loc_{\mG_1\times\mG_2}$. By a similar construction as above, we can form
the induced $\mG_2$-torsor
$$P\otimes E:= P\times^{\mG_1}E,$$
equipped with a flat connection $\nabla_{P\otimes E}$. The
assignment $(P,\nabla_P)\times (E,\nabla_E)\ra (P\otimes
E,\nabla_{P\otimes E})$ defines a functor
\begin{equation}\label{tensor}
\otimes:\Loc_{\mG_1}\times \Loc_{\mG_1\times\mG_2}\ra\Loc_{\mG_2},
\end{equation}
sometimes called the tensor functor.

%%%%%%%%%%%%%%%%%%%%%
\subsection{The $p$-curvature}\label{appen:p-curv}
Let us now assume that $p\calO_S=0$. Let us first recall the notion of the $p$-curvature of a $\mD_{X/S}$-module.

Let $(\mM,\nabla:\mM\ra\mM\otimes\Omega_{X/S})$ be a $\mD_{X/S}$-module. For any $v\in T_{X/S}$, let $v^{[p]}\in T_{X/S}$ be the $p$-th power of $v$.
By \cite[Proposition 5.2]{K},
the map 
\[T_{X/S}\ra (F_X)_*\End_{\mO_X}(\mM),\ \ v\ra\nabla(v)^p-\nabla(v^{[p]})\]
is a morphism of 
$\mO_X$-modules. 
By adjunction, we 
get a map 
\[\Psi(\nabla):F^*T_{X'/S}\ra\End_{\mO_X}(\mM),\] 
called the $p$-curvature of $(\mM,\nabla)$. 

Let $N\to X$ be an affine $\mD_{X/S}$-scheme, we define the $p$-curvature of $N$ as the $p$-curvature $\Psi(\nabla_N)$ of the $\mD_{X/S}$-algebra $(\mO_N,\nabla_N)$. Using the formula of the $p$-curvature of a tensor connection $\Psi(\nabla_1\otimes\nabla_2)=\Psi(\nabla_1)\otimes\id+\id\otimes\Psi(\nabla_2)$, we see that $\Psi(\nabla_N)$ factors through
\[\Psi(\nabla_N): F^*T_{X'/S}\to \Der_{\mO_X}(\mO_N)\subset
\End_{\mO_X}(\mO_N).\]

Now let $\mG$ be a smooth affine $\mD_{X/S}$-group scheme. It is easy to see that for any $v\in F^*T_{X'/S}$, the following diagram is
commutative
\[\begin{CD}
\mO_\mG@>m>>\mO_\mG\otimes_{\mO_X}\mO_{\mG}\\
@V\Psi(\nabla_\mG)(v) VV@VV\Psi(\nabla_\mG)(v)\otimes \id+\id\otimes \Psi(\nabla_\mG)(v) V\\
\mO_{\mG}@>m>>\mO_{\mG}\otimes_{\mO_X}\mO_{\mG}.
\end{CD}\]
Likewise, let $(E,\nabla)\in\Loc_{\mG}$. Then for any $v\in F^*T_{X'}$, the following diagram is
commutative
\begin{equation}\label{p-curv for}
\begin{CD}
\mO_E@>m>>\mO_E\otimes_{\mO_X}\mO_{\mG}\\
@V\Psi(\nabla)(v) VV@VV\Psi(\nabla)(v)\otimes \id+\id\otimes \Psi(\nabla_\mG)(v) V\\
\mO_E@>m>>\mO_E\otimes_{\mO_X}\mO_{\mG}.
\end{CD}
\end{equation}
In particular, if $\Psi(\nabla_\mG)=0$, then the image of $\Psi(\nabla)$ lands in
$\ad(E)\subset \Der_{\mO_X}(\mO_E)$, and therefore the $p$-curvature mapping can be regarded as a
section
\[\Psi(\nabla)\in \ad(E)\otimes F^*\Omega_{X'/S},\]
which reduces to the standard notion of the $p$-curvature of a principal bundle with a flat connection (e.g. see \cite[Appendix]{B}).

\subsection{The scheme of horizontal sections}\label{horizontal sect}
To continue, we develop the theory of schemes of horizontal sections of a $\mD_{X/S}$-scheme in characteristic $p$, analogous to \cite[Proposition 2.6.2]{BD}.
\begin{lemma}[Cartier descent]\label{Cd}
\
\begin{enumerate}
\item
For any quasi-coherent sheaf $\mL$ on $X'$, there is a canonical
$\mD_X$-module structure on $F^*(\mL)$ and the assignment $\mL\ra
F^*(\mL)$ defines an equivalence between the category of
quasi-coherent sheaves on $X'$ and the category of $\mD_X$-modules
on $X$ with zero p-curvature, with an inverse functor given
by taking flat sections $\mF\mapsto \mF^\nabla$.

\item The above equivalence is a tensor equivalence, i.e.,
for any $\mL_1,\mL_2\in\on{QCoh}(X')$ the natural isomorphism of $\mO_{X'}$-modules
\[m:F^*(\mL)\otimes F^*(\mL_2)\is F^*(\mL_1\otimes\mL_2)\]
is compatible with their $\mD_X$-modules
structures coming from part (1).

\item Let $\mB$ be a $\mO_{X'}$-algebra on $X'$. Then there is a
canonical $\mD_X$-algebra structure on $F^*(\mB)$ and the assignment
$\mL\ra F^*(\mL)$ defines an equivalence between the category of
$\mO_{X'}$-algebras on $X'$ and the category of $\mD_X$-algebra on
$X$ with zero p-curvature.
\end{enumerate}
\end{lemma}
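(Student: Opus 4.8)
The plan is to regard part (1) as the classical Cartier descent theorem, reduce its only nonformal ingredient to a local computation in coordinates, and then obtain parts (2) and (3) for free from the symmetric monoidal character of the equivalence. I would begin by writing $F^*(\mL) = \mO_X \otimes_{F^{-1}\mO_{X'}} F^{-1}\mL$ and equipping it with the connection $\nabla^{can}(f \otimes s) = df \otimes s$. This is well defined: the pullback to $\mO_X$ of a local section of $\mO_{X'}$ is locally a $p$-th power, hence killed by $d$, so moving such a section across the tensor product does not change the value. To see the $p$-curvature vanishes, I would note that $\nabla^{can}_D = D \otimes \id$ for a derivation $D$, and that the $p$-fold composite $D^{\circ p}$ of a derivation equals its restricted power $D^{[p]}$ as an operator on $\mO_X$; hence $(\nabla^{can}_D)^p = D^{[p]} \otimes \id = \nabla^{can}_{D^{[p]}}$, which is exactly the statement that the $p$-curvature is zero. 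Thus $F^*$ takes values in $\mD_X$-modules with vanishing $p$-curvature, and its candidate inverse is $\mF \mapsto \mF^\nabla$.

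To establish the equivalence I would check that the unit and counit are isomorphisms. The unit $\mL \to (F^*\mL)^{\nabla^{can}}$ is immediate, since the horizontal sections of $\nabla^{can}$ are precisely $1 \otimes \mL$. The counit $F^*(\mF^\nabla) \to \mF$ carries the real content. Here I would argue locally: after choosing \'etale coordinates $x_1, \ldots, x_n$ on $X$ over $S$, the sheaf $\mO_X$ is free of rank $p^n$ over $\mO_{X'}$ on the monomials $x^\alpha$ with $0 \le \alpha_i < p$, and vanishing of the $p$-curvature says the commuting operators $\nabla(\partial_i)$ satisfy $\nabla(\partial_i)^p = 0$. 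A direct computation then identifies $\mF$ with $F^*$ of its sheaf of horizontal sections, compatibly with the connection; since the assertion is local this gives the equivalence. This local analysis is the one genuinely nonformal step, and I expect it to be the main obstacle; everything that follows is bookkeeping.

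For part (2), I would verify directly that the canonical isomorphism $m : F^*(\mL_1) \otimes F^*(\mL_2) \to F^*(\mL_1 \otimes \mL_2)$ is horizontal. Applying the tensor-product connection on the left to $(f_1 \otimes s_1) \otimes (f_2 \otimes s_2)$ and pushing forward by $m$ yields $(f_2\,df_1 + f_1\,df_2) \otimes (s_1 \otimes s_2) = d(f_1 f_2) \otimes (s_1 \otimes s_2)$, which is $\nabla^{can}$ evaluated on the image of the chosen section. Hence $m$ intertwines the connections, so $F^*$ is a symmetric monoidal functor, and by part (1) a symmetric monoidal equivalence.

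Part (3) is then formal. An $\mO_{X'}$-algebra is a commutative monoid object in $\on{QCoh}(X')$, while a $\mD_X$-algebra with zero $p$-curvature is a commutative monoid object in the category of $\mD_X$-modules with vanishing $p$-curvature; a symmetric monoidal equivalence induces an equivalence on the associated categories of monoid objects. Concretely, the $\mD_X$-algebra structure on $F^*(\mB)$ is $F^*$ applied to the multiplication of $\mB$, and the fact that $\nabla^{can}$ is a derivation for this product (the Leibniz rule) is precisely the horizontality established in part (2). I therefore expect no new difficulty in part (3) beyond recording these identifications.
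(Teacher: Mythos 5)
Your proposal is correct and follows the same route as the paper, which simply invokes the standard Cartier descent for part (1), asserts that part (2) is a direct computation (the horizontality of $m$ you verify explicitly), and deduces part (3) from part (2); you have merely supplied the standard details that the paper leaves implicit. The only tiny imprecision is the claim that sections of $F^\sharp\mO_{X'}$ are locally $p$-th powers — they are locally sums of products of $p$-th powers with functions pulled back from $S$ — but both kinds of terms are killed by the relative differential $d_{X/S}$, so your well-definedness argument stands.
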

\begin{proof}
Part (1) is the standard Cartier descent. Part (3) follows from Part
(2), which can be proved by a direct computation.
\end{proof}

\begin{prop}
\
\begin{enumerate}
\item
The functor $\mM'\ra F^*\mM'$ admits a left adjoint functor, i.e.
for a $\mD_X$-algebra $\mN$ there is a $\mO_{X'}$-algebra
$H_{\nabla}(\mN)$ such that
$$\on{Hom}_{\mD_X\on{-alg}}(\mN,F^*\mM')=\on{Hom}_{\mO_{X'}\on{-alg}}(H_{\nabla}(\mN),\mM')$$
for any $\mO_{X'}$-algebra $\mM'$.
\item
The canonical map $\mN\ra F^*(H_{\nabla}(\mN))$ is surjective.
\end{enumerate}
\end{prop}

\begin{proof}
Let $\nabla$ be the corresponding connection of $\mN$. We denote by $\Psi$ the
$p$-curvature of $\nabla$. We can think of $\Psi$ as a map
$\Psi:F^*T_{X'}\otimes_{\mO_X}\mN\ra\mN$. Let $\Psi(\mN)$ be the
ideal of $\mN$ generated by the image of $\Psi$ and define
$\mN_{\Psi}=\mN/\Psi(\mN)$. Since $\nabla$ commutes with $\Psi$, the
$\mO_X$-algebra $\mN_{\Psi}$ carries a connection and we define the
following $\mO_{X'}$-algebra
$$H_{\nabla}(\mN)=(\mN_{\Psi})^{\nabla}.$$
Let us show that $H_{\nabla}(\mN)$ satisfies our requirement. For
any $\mO_{X'}$-algebra $\mM'$, the $p$-curvature of $F^{*}\mM'$ is
zero, and therefore we have $$\on{
Hom}_{\mD_X\on{-alg}}(\mN,F^{*}\mM')=\on{Hom}_{\mD_X\on{-alg}}(\mN_{\Psi},F^*\mM')$$
On the other hand, since the $p$-curvature of $\mN_{\Psi}$ is zero,
by Lemma \ref{Cd}, we have
$$\on
{Hom}_{\mD_X\on{-alg}}(\mN_{\Psi},F^*\mM')=\on{Hom}_{\mO_{X'}\on{-alg}}((\mN_{\Psi})^{\nabla},\mM')
=\on{Hom}_{\mO_{X'}\on{-alg}}(H_{\nabla}(\mN),\mM').$$ Therefore
$$\on{
Hom}_{\mD_X\on{-alg}}(\mN,F^{*}\mM')=\on{Hom}_{\mO_{X'}\on{-alg}}(H_{\nabla}(\mN),\mM').$$
This proved part 1).

By Lemma \ref{Cd} again, we have $\mN_{\Psi}=F^*(H_{\nabla}(\mN))$
and it implies the canonical map
$\mN\ra\mN_{\psi}=F^*(H_{\nabla}(\mN))$ is surjective. This proved
part 2).
\end{proof}

Let $\mN$ be a commutative $\mD_X$-algebra. Then the
$\mO_{X'}$-algebra $H_{\nabla}(\mN)$ is commutative by $(2)$. We
called the $X'$-scheme $$N^{\nabla}:=\on{Spec}(H_{\nabla}(\mN))$$
the scheme of horizontal sections. Let $N$ and $N^{\Psi}$ be the
$\mD_X$-scheme associated to the commutative $\mD_X$-algebras $\mN$
and $\mN_{\Psi}$.  We have
\begin{equation}\label{horizontal}
N^{\Psi}=X\times_{X'}N^{\nabla}, \quad (N^{\Psi})^\nabla=N^\nabla,
\end{equation}
and $N^{\Psi}\subset N$ is the maximal closed subscheme of $N$ that
is constant with respect to the connection.

\begin{example}\label{Cartier}
Let $\mG'$
be a smooth affine group scheme over $X'$. Then by the theory of Cartier decent, the group scheme
$\mG:=F^*\mG'$ has a canonical connection, and for any $\mG'$-torsor
$E'$, there is a canonical connection on the $F^{*}\mG'$-torsor
$F^{*}E'$. One can easily check that the
$p$-curvature of this connection is zero. Therefore, we have a
functor
$$F^*:\Bun_{\mG'}\ra\Loc_{\mG}, \quad E'\ra F^*E'.$$
The functor $F^*$ induces an equivalence between the category of
$\mG'$-torsors on $X'$ and the category of $\mG$-local systems with
zero $p$-curvature. The inverse is given by $E\mapsto E^\nabla$.
\end{example}

\subsection{The commutative case}\label{The commutative case}
Assume that $\mG'$ is commutative on $X'$ and $\mG=F^*\mG'$. For any
$\mG$-torsor $E$, $\ad(E)\simeq \on{Lie}\mG$ canonically. If
$(E,\nabla)\in\Loc_{\mG}$, then by the Cartier descent,
$\Psi(\nabla)\in \Gamma(X,\on{Lie}\mG\otimes F^*\Omega_{X'/S})$ is
the pullback of a unique element in $\Gamma(X',\Lie\mG'\otimes
\Omega_{X'/S})$. Therefore, taking $p$-curvature can be regarded as
a map
\[h_p:\Loc_{\mG}\to \Gamma(X',\Lie\mG'\otimes \Omega_{X'/S}),\]
where $T=\Gamma(X',\Lie\mG'\otimes \Omega_{X'/S})$ is regarded as a space over $S$, whose fiber over $s\in S$ is $\Gamma(X'_s,\Lie\mG'|_{X'_s}\otimes\Omega_{X'_s})$.
This is called the $p$-Hitchin map (see Theorem \ref{p-hitchin} for the
non-commutative analogue). 

As $\mG$ is commutative, given
two $\mG$-local systems $(E_1,\nabla_1),(E_2,\nabla_2)$, the
induction $E_1\times^\mG E_2$ is a natural $\mG$-local system. Then
we have
\begin{equation}\label{appen:add}
h_p(E_1\times^\mG E_2)=h_p(E_1)+h_p(E_2).
\end{equation}
Combining with Example \ref{Cartier}, we have
\begin{corollary}\label{commutative Cartier}
Assume that $\mG'$ is commutative. Then the stack of $\mG$-local
systems with a fixed $p$-curvature $\psi\in T$ is a (pseudo) torsor under the
Picard stack $\Bun_{\mG'}$.
\end{corollary}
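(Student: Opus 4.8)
The plan is to realize the $p$-Hitchin map $h_p:\Loc_{\mG}\to T$ as a homomorphism of Picard stacks over $S$ whose kernel is $\Bun_{\mG'}$, and then invoke the purely formal fact that a fiber of a homomorphism of commutative group stacks is a pseudo-torsor under the kernel. First I would put a Picard-stack structure on $\Loc_{\mG}$. Since $\mG'$, and hence $\mG=F^*\mG'$, is commutative, the multiplication $m:\mG\times\mG\to\mG$ is a group homomorphism, and it is horizontal because the connection $\nabla_{\mG}$ is compatible with the group law (\S\ref{quasi-coh}). Applying the induction functor \eqref{ind} to $m$, together with the inversion of $\mG$ and the trivial local system as unit, equips $\Loc_{\mG}$ with a commutative group law: for $(E_1,\nabla_1),(E_2,\nabla_2)\in\Loc_{\mG}$ the contracted product $(E_1,\nabla_1)\times^{\mG}(E_2,\nabla_2)$ is again a $\mG$-local system, and these data make $\Loc_{\mG}$ a Picard stack over $S$. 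In the same way $\Bun_{\mG'}$ is a Picard stack, and the Frobenius pullback $F^{*}$ of Example \ref{Cartier} is monoidal, i.e. a homomorphism of Picard stacks, since $F^{*}$ commutes with contracted products and with their canonical connections.

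Next I would observe that $T=\Gamma(X',\Lie\mG'\otimes\Omega_{X'/S})$ is a sheaf of abelian groups over $S$, regarded as a discrete Picard stack, and that $h_p$ is a homomorphism: this is exactly the additivity $h_p(E_1\times^{\mG}E_2)=h_p(E_1)+h_p(E_2)$ of \eqref{appen:add}, which forces also $h_p((E,\nabla)^{-1})=-h_p(E,\nabla)$. The kernel is then computed by Cartier descent: Example \ref{Cartier} shows that $F^{*}$ is an equivalence from $\Bun_{\mG'}$ onto the full substack of $\Loc_{\mG}$ of local systems with vanishing $p$-curvature, that is, onto $h_p^{-1}(0)=\ker h_p$, with inverse $(E,\nabla)\mapsto E^{\nabla}$. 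Thus $\Bun_{\mG'}\xrightarrow{F^{*}}\Loc_{\mG}\xrightarrow{h_p}T$ exhibits $\Bun_{\mG'}$ as the kernel of $h_p$.

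Finally I would deduce the pseudo-torsor property by writing down the action and its inverse. Let $\mathcal{F}_\psi=h_p^{-1}(\psi)$ and let $\Bun_{\mG'}$ act on $\mathcal{F}_\psi$ via $(P',x)\mapsto F^{*}P'\times^{\mG}x$; additivity ensures this action preserves $\mathcal{F}_\psi$. The pseudo-torsor condition is that the map $\Bun_{\mG'}\times\mathcal{F}_\psi\to\mathcal{F}_\psi\times\mathcal{F}_\psi$ sending $(P',x)$ to $(F^{*}P'\times^{\mG}x,\,x)$ is an equivalence; its candidate inverse sends $(y,x)$ to $(y\times^{\mG}x^{-1},\,x)$. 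The key point is that $h_p(y\times^{\mG}x^{-1})=\psi-\psi=0$, so by the previous step $y\times^{\mG}x^{-1}$ lies in $\ker h_p\simeq\Bun_{\mG'}$; the associativity and unit constraints of the Picard structure then show that the two maps are mutually inverse. This yields the pseudo-torsor statement, and $\mathcal{F}_\psi$ is an honest $\Bun_{\mG'}$-torsor precisely when it is locally nonempty over $S$, which is why the corollary hedges with ``(pseudo)''.

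The only substantive inputs are the additivity \eqref{appen:add} and the Cartier-descent computation of the kernel (Example \ref{Cartier}); everything else is the formalism of commutative group stacks. Accordingly I expect the main, and only mild, obstacle to be carrying out this formalism cleanly at the level of stacks rather than of isomorphism classes: namely, checking that the contracted product, the inversion, and the resulting ``difference'' assignment $(y,x)\mapsto y\times^{\mG}x^{-1}$ are well-defined $1$-morphisms of stacks over $S$ compatible with $h_p$, so that the action map and its proposed inverse are genuine equivalences of stacks and not merely bijections on objects.
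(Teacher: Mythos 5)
Your proposal is correct and follows essentially the same route as the paper: the paper deduces the corollary directly by combining the additivity \eqref{appen:add} of $h_p$ under contracted product with the Cartier-descent equivalence of Example \ref{Cartier} identifying $\Bun_{\mG'}$ with the fiber $h_p^{-1}(0)$. Your write-up merely makes explicit the formal Picard-stack bookkeeping that the paper leaves implicit.
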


We will give an interpretation of this (pseudo) $\Bun_{\mG'}$-torsor, generalizing \cite[Proposition 4.13]{OV} for $\mG=\bbG_m$. 

Let $\calZ_{X/S}$ be the
sheaf of closed one forms on $X$. Then $F_*\calZ_{X/S}$ is an $\mO_{X'}$-module. 
Recall the definition of the sheaf of flat connections $(\Lie\mG\otimes\Omega_{X/S})^{cl}$ on the trivial $\mG$-torsor $E^0$ as in \S \ref{conn on triv}. Under our assumptions of $\mG$,
 $F_*(\Lie\mG\otimes\Omega_{X/S})^{cl}=\Lie\mG'\otimes_{\mO_{X'}} F_*\calZ_{X/S}$, where $\calZ_{X/S}$ is the sheaf of closed one-forms on $X$. 
 Then the
$p$-Hitchin map $h_p$ induces an additive map of  \'{e}tale sheaves on $X'$, 
\[h_p:\Lie\mG'\otimes F_*\calZ_{X/S}\to \Lie\mG'\otimes\Omega_{X'/S}.\]
As in the case $\mG=\bG_m$, this map fits into 
the following four term exact sequence of sheaves on $X'_{et}$.
\begin{prop}\label{appen:4-term}
Over $X'_{et}$, there is the exact sequence of  \'{e}tale sheaves
\[1\longrightarrow\mG'\longrightarrow F_*\mG\stackrel{F_*d\log}{\longrightarrow} \Lie\mG'\otimes F_*\calZ_{X/S}\stackrel{h_p}{\longrightarrow} \Lie\mG'\otimes \Omega_{X'/S}\longrightarrow 1,\]
where $d\log$ is defined as in \S \ref{conn on triv}.
\end{prop}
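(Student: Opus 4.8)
The plan is to prove exactness separately at each of the four spots, noting first that the whole statement is local on $X'_{et}$: all four terms are étale sheaves on $X'$, and the two outer maps are the structure maps, while $F_*d\log$ is well defined because $d\log(g)$ is a \emph{flat} connection form (\S \ref{conn on triv}) and $h_p$ is well defined because the $p$-curvature of a connection on a $\mG=F^*\mG'$-torsor is horizontal and hence descends to $X'$ by Cartier descent (Lemma \ref{Cd}, \S \ref{The commutative case}). Throughout I identify the middle term $\Lie\mG'\otimes F_*\calZ_{X/S}=F_*(\Lie\mG\otimes\Omega_{X/S})^{cl}$ with the sheaf of flat connections on the trivial $\mG$-torsor $E^0$ via $\omega\mapsto\nabla^0+\omega$, so that $F_*d\log$ sends $g$ to the connection form $d\log(g)$ and $h_p$ sends $\omega$ to the $p$-curvature of $\nabla^0+\omega$. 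All maps are additive, so it suffices to argue exactness as sequences of sheaves of abelian groups.

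Exactness at the first two spots is pure Cartier descent. The canonical map $\mG'\to F_*\mG$ is the inclusion of horizontal sections $\mG'=\mG^\nabla\hookrightarrow\mG$ provided by Lemma \ref{Cd} and Example \ref{Cartier}, which is injective; this gives exactness at $\mG'$. For the next spot, $d\log(g)=g^{-1}\nabla_\mG(g)$ vanishes precisely when $g$ is horizontal, i.e. $\ker(F_*d\log)=\mG^\nabla=\mG'$, which is exactly the image of the first map.

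For exactness at the middle term I first note $h_p\circ F_*d\log=0$: by \S \ref{conn on triv} the section $g$ is an isomorphism $(E^0,\nabla^0+d\log(g))\simeq(E^0,\nabla^0)$, and $\nabla^0$ has zero $p$-curvature since $(E^0,\nabla^0)=F^*(\text{trivial }\mG'\text{-torsor})$ (Example \ref{Cartier}); as $p$-curvature is invariant under isomorphism this forces $h_p(d\log(g))=0$. Conversely, if $h_p(\omega)=0$ then $(E^0,\nabla^0+\omega)$ has vanishing $p$-curvature, so by Example \ref{Cartier} it is the Frobenius pullback of an $\mG'$-torsor $E'$ on $X'$ whose underlying $\mG$-torsor $F^*E'$ is trivial; since $\mG'$ is smooth, $E'$ is trivial étale-locally on $X'$, and over such an open $(E^0,\nabla^0+\omega)\simeq(E^0,\nabla^0)$, whence $\omega=d\log(g)$ by \S \ref{conn on triv}. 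Thus $\ker(h_p)=\on{im}(F_*d\log)$ as étale sheaves.

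The remaining and main point is surjectivity of $h_p$, i.e. exactness at the last spot; this is where the étale topology is essential. The plan is to reduce to a local computation: since cokernel formation commutes with étale localization, I may pass to a strictly henselian (or complete) local ring of $X'$, choose étale coordinates, and trivialize the locally free sheaf $\Lie\mG'$. There the problem becomes solving the equation ``$p$-curvature $=\eta$'' for a prescribed $\eta\in\Lie\mG'\otimes\Omega_{X'/S}$, which is exactly the local solvability of \eqref{phe} discussed in the introduction. Concretely, $h_p$ is, modulo the image of $d\log$, the Cartier operator $C\colon F_*\calZ_{X/S}\to\Omega_{X'/S}$ tensored with $\Lie\mG'$, corrected by the Frobenius-semilinear term coming from the restricted-power operation on $\Lie\mG'$; surjectivity then follows from the classical surjectivity of $C$ together with the fact that an additive Frobenius-semilinear operator is étale-locally surjective (Artin--Schreier). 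This recovers, and generalizes from $\mG'=\bbG_m$, the exact sequence of \cite[\S 2.1]{IL}. The hard part is precisely this identification of $h_p$ with a Cartier-type operator and the resulting surjectivity, since $h_p$ is only Frobenius-semilinear and so cannot be checked fiberwise by linear algebra; it is also the reason the sequence fails to be exact in the Zariski topology.
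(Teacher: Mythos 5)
Your treatment of the first three exactness spots coincides with the paper's: the kernel of $F_*d\log$ is $\mG^\nabla=\mG'$; the composite $h_p\circ F_*d\log$ vanishes because $g$ gives an isomorphism $(E^0,\nabla^0+d\log(g))\simeq(E^0,\nabla^0)$ and the latter has zero $p$-curvature; and a flat connection on $E^0$ with zero $p$-curvature is \'{e}tale-locally of the form $\nabla^0+d\log(g)$ by Cartier descent together with an \'{e}tale-local trivialization of $E'=(E^0)^\nabla$. Up to that point the proposal is correct and is essentially the argument in the text.

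The gap is in the surjectivity of $h_p$. You correctly identify the shape of the operator --- the paper proves, via the Jacobson identity, that $h_p(z\otimes\omega)=z^p\otimes\pi_{X/S}^*(\omega)-z\otimes\calC_{X/S}(\omega)$ --- but the deduction you then offer, namely that surjectivity ``follows from the classical surjectivity of $\calC_{X/S}$ together with the fact that an additive Frobenius-semilinear operator is \'{e}tale-locally surjective (Artin--Schreier)'', is not a valid inference. First, the semilinear term $z\otimes\omega\mapsto z^p\otimes\pi_{X/S}^*(\omega)$ need not be \'{e}tale-locally surjective at all: the $p$-operation on $\Lie\mG'$ can vanish identically (e.g.\ for a vector group), so that summand can be zero. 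Second, even if both summands were separately surjective, nothing would follow about their difference. The mechanism that actually works --- and this is the content of the lemma embedded in the paper's proof --- is that $h_p$ is induced by a homomorphism $\mathbbm{h}_p=\Phi-\bbC_{X/S}$ of the commutative group schemes $\bbV_1\to\bbV_2$ underlying the two vector bundles, in which $\Phi$ is \emph{inseparable} and therefore contributes nothing to the differential; hence $d\mathbbm{h}_p=-d\bbC_{X/S}$ is surjective, $\mathbbm{h}_p$ is smooth, its image contains an open subgroup of each connected fiber $(\bbV_2)_x$ and so equals all of it, and smoothness then lifts any section of $\bbV_2$ \'{e}tale-locally. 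Note also that the paper explicitly warns that the coordinate computation of Illusie for $\bG_m$ does not transport to general $\mG'$, precisely because there is no explicit formula for the $p$-operation on $\Lie\mG'$; the intrinsic smoothness argument is the substitute. Your closing admission that ``the hard part is precisely this identification \dots\ and the resulting surjectivity'' is accurate: that is exactly the step the proposal leaves unproved.
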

\begin{proof}
It is clear from the definition of $d\log$ that the kernel of $F_*d\log$ is $\mG^\nabla=\mG'$. Next, we show that the sequence is exact at $\Lie\mG'\otimes F_*\calZ_{X/S}$. As explained in \S \ref{conn on triv}, $(E^0,\nabla^0+d\log(g))\simeq (E^0,\nabla^0)$. As the latter has zero $p$-curvature, we see that $h_p\circ F_*d\log=0$. On the other hand, if $\nabla=\nabla^0+\omega$ is a flat connection on $E^0$ with zero $p$-curvature, then by Example \ref{Cartier} $E'=(E^0)^\nabla$ is a $\mG'$-torsor. and there is a canonical isomorphism $\alpha:F^*E'\simeq E^0$. \'{E}tale locally, we can trivialize $E'$ so we can choose $\beta:(E^0)'\simeq E'$. Then we obtain a section $g=\alpha\circ F^*\beta\in \mG$, and it is not hard to check that $\omega=d\log(g)$. Note that if $\mG=\bbG_m$, the argument shows that this sequence is exact at $\Lie\mG'\otimes F_*\calZ_{X/S}$ even Zariski locally on $X'$, which is well-known.

Finally, we show that $h_p$ is surjective. Unlike $\mG=\bG_m$, there is no explicit formula for the $p$-linear map of $\Lie\mG$, and the usual argument by explicit calculations does not apply here.

Recall that if $\calV$ is a locally free $\mO_{X'}$-module of finite rank, $\Spec\on{Sym}_{\mO_{X'}}\calV^\vee$ is a vector bundle on $X'$ whose sheaf of sections are $\calV$.
Let $\bbV_1$ and $\bbV_2$ be the vector bundles on $X'$ corresponding to $\Lie\mG'\otimes F_*\calZ_{X/S}$ and $\Lie\mG'\otimes\Omega_{X'/S}$ respectively. Although $h_p$ is not $\mO_{X'}$-linear, we have
\begin{lem}The map $h_p$ is induced by a smooth surjective homomorphism of commutative group schemes $\mathbbm{h}_p:\bbV_1\to \bbV_2$.
\end{lem}
Assuming the lemma, the surjectivity of $h_p$ then is clear. Namely, let $s: X'\to \bbV_2$ be a section, then \'{e}tale locally on $X'$, $s$ can be lifted to a section of $\bbV_1$ as $\mathbbm{h}_p$ is smooth surjective. 

It remains to prove the lemma. 
We use an argument similar to \cite[Proposition 2.5 (1)]{OV}.
We first give another description of the map $h_p$. Let $\mathcal C_{X/S}:  F_*\calZ_{X/S}\to \Omega_{X'/S}$ be
the Cartier operator. Let $\pi_{X/S}:X'\to X$ is the map over the
absolute Frobenius $Fr_S$ of $S$. Let $(-)^{p}:\Lie \mG'\to \Lie\mG'$ be the map given by the $p$-Lie algebra structure of $\Lie\mG'$. We claim that for $z\otimes \omega\in \Lie \mG'\otimes F_*\calZ_{X/S}$, 
\begin{equation}\label{explicit hp}
h_p(z\otimes\omega)= z^p\otimes \pi^*_{X/S}(\omega)- z\otimes \calC_{X/S}(\omega).
\end{equation}
Indeed, let $\nabla^0$ be the canonical connection on the trivial $\mG$-torsor $E^0$. 
Then for a vector field $\xi\in T_{X/S}$, and $z\otimes \omega\in \Lie\mG'\otimes F_*\calZ_{X/S}$, one has (as sections of $\widetilde{T}_{E^0}$)
\[(\nabla^0_\xi+(\omega,\xi)z)^p-((\nabla^0_\xi)^p+(\omega,\xi^p)z)=\xi^{p-1}(\omega,\xi)z+(\omega,\xi)^pz^p-(\omega,\xi^p)z,\]
by the Jacobson identity. By the definition of the Cartier operator, $(\omega,\xi^p)-\xi^{p-1}(\omega,\xi)=(\calC_{X/S}(\omega), \pi^*_{X/S}(\xi))$. The claim follows. 

We apply \eqref{explicit hp} as follows:
The Cartier operator $\calC_{X/S}$ is $\calO_{X'}$-linear and surjective, and therefore is induced by a smooth homomorphism $\bbC_{X/S}: \bbV_1\to \bbV_2$ of the underlying commutative group schemes. The map $z\otimes \omega\mapsto z^p\otimes \pi^*_{X/S}(\omega)$ is not $\mO_{X'}$-linear, but is still induced from an inseparable homomorphism $\Phi:\bbV_1\to \bbV_2$ of the commutative group schemes. In fact, let $Fr_{X'}:X'\to X'$ be the absolute Frobenius of $X'$. We have $\mO_{X'}$-linear maps $Fr_{X'}^*\Lie\mG'\to \Lie\mG'$ given by the $p$-Lie algebra structure and  $Fr_{X'}^*F_*\calZ_{X/S}\to \Omega_{X'/S}$ given by the composition $Fr_{X'}^*F_*\calZ_{X/S}\to Fr_{X'}^*F_*\Omega_{X/S}\to \Omega_{X'/S}$. Therefore, we have a commutative group scheme homomorphism $\bbV_1^{(X')}\to \bbV_2$. It is then readily to see that $\Phi$ is the composition of the relative Frobenius $\bbV_1\to \bbV_1^{(X')}$ (which is a group homomorphism) with the above homomorphism.
 
Now $\mathbbm{h}_p=\Phi-\bbC_{X/S}$ is a smooth morphism, as $\Phi$ is inseparable and the differential of $\bbC_{X/S}$ is surjective. It remains to show that $\Phi-\bbC_{X/S}$ is surjective at the level of points. We can base change $\mathbbm{h}_p$ to a geometric point of $x\in X'$. As $\mathbbm{h}_p$ is smooth, the image contains an open subgroup of $(\bbV_2)_{x}$, as $(\bbV_2)_{x}$ is connected, this subgroup must be the entire $(\bbV_2)_{x}$. The lemma follows.
\end{proof}

Now, we will give an interpretation of the pseudo $\Bun_{\mG'}$-torsor in Corollary \ref{commutative Cartier}.  Namely, the $p$-curvature $\psi$ gives rise to a section of $\Lie\mG'\otimes \Omega_{X'/S}$, and the 4-term exact sequence \ref{appen:4-term} induces a $\mG'$-gerbe $\sG_\psi$ on $X'$. 

\begin{prop}\label{appen: equiv two gerbes}
Then the pseudo $\Bun_{\mG'}$-torsor as in Corollary \ref{commutative Cartier} is the stack of splittings of the gerbe $\sG_\psi$ over $S$.
\end{prop}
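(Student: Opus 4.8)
The plan is to exhibit a canonical $\Bun_{\mG'}$-equivariant equivalence between the two stacks directly, by matching, over each underlying torsor, flat connections of prescribed $p$-curvature with objects of the gerbe. Both sides are pseudo-torsors under $\Bun_{\mG'}$: the left-hand side $\Loc_\mG(\psi)$ (the stack of $\mG$-local systems with $p$-curvature $\psi$) by Corollary \ref{commutative Cartier}, and the right-hand side because the stack of global objects of any $\mG'$-gerbe is a pseudo-torsor under $\Bun_{\mG'}$. I will not invoke any formal principle turning equivariant maps into equivalences (this is false for empty pseudo-torsors); instead full faithfulness and essential surjectivity will both fall out of one fiberwise identification.

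First I would unwind $\sG_\psi$. Setting $\calK := \ker(h_p) = \on{im}(F_*d\log)$, the four-term sequence of Proposition \ref{appen:4-term} splits into two short exact sequences of \'etale sheaves on $X'$:
\[
1\to \mG' \to F_*\mG \xrightarrow{F_*d\log} \calK \to 1,\qquad
1 \to \calK \to \Lie\mG'\otimes F_*\calZ_{X/S}\xrightarrow{h_p}\Lie\mG'\otimes\Omega_{X'/S}\to 1.
\]
Since $h_p$ is surjective, $\psi$ determines a $\calK$-torsor $\calT_\psi := h_p^{-1}(\psi)$, and by construction $\sG_\psi$ is the $\mG'$-gerbe of liftings of $\calT_\psi$ along $F_*\mG\to\calK$: an object over an \'etale $U'\to X'$ is an $F_*\mG$-torsor $\calE$ together with an isomorphism $\calE\times^{F_*\mG}\calK \simeq \calT_\psi$ of $\calK$-torsors.

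The heart of the argument is a dictionary between such lifting data and flat connections, established \'etale-locally. Because the relative Frobenius $F:X\to X'$ is a universal homeomorphism it induces an equivalence of \'etale topoi, so $F_*\mG$-torsors on $X'$ are the same as $\mG$-torsors on $X$; write $\calE = F_*E$ for a $\mG$-torsor $E$. For fixed $E$, the \'etale sheaf $\underline{\on{Conn}}^{fl}(E)$ of flat connections on $E$ is, as $\mG$ is commutative, a pseudo-torsor under $(\Lie\mG\otimes\Omega_{X/S})^{cl}$, whose pushforward is $\Lie\mG'\otimes F_*\calZ_{X/S}$: trivializing $E$ by a local section $s$ identifies a flat connection $\nabla$ with the closed form $\nabla-\nabla^0_s$, and changing $s$ by $g$ shifts this by $d\log(g)$ (\S\ref{conn on triv}). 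Hence
\[
\underline{\on{Conn}}^{fl}(E) \;\simeq\; \calE\times^{F_*\mG, F_*d\log}\bigl(\Lie\mG'\otimes F_*\calZ_{X/S}\bigr).
\]
Since $h_p\circ d\log = 0$ (exactness at the middle term), the $p$-curvature map $\Psi$ is $F_*\mG$-invariant and descends to $h_p$ on this associated bundle, so the subsheaf of flat connections with $p$-curvature $\psi$ is canonically
\[
\underline{\on{Conn}}^{fl}_\psi(E)\;\simeq\; \underline{\on{Isom}}_\calK\bigl(\calE\times^{F_*\mG}\calK,\ \calT_\psi\bigr),
\]
a $\calK$-torsor. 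A global section of the left side is exactly a flat connection $\nabla$ on $E$ with $\Psi(\nabla)=\psi$, i.e.\ an object of $\Loc_\mG(\psi)$ over $E$; a global section of the right side is exactly an object of $\sG_\psi$ over $\calE=F_*E$. This defines the functor $(E,\nabla)\mapsto(F_*E,\text{induced splitting})$.

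Finally I would check equivariance and conclude. Tensoring $(E,\nabla)$ by a $\mG'$-torsor $E'$ amounts to twisting $E$ by $F^*E'$, which carries the canonical connection of zero $p$-curvature (Example \ref{Cartier}); as $\mG'=\ker(F_*d\log)$, this changes $\calE$ by $E'$ via $\mG'\hookrightarrow F_*\mG$ while leaving $\calE\times^{F_*\mG}\calK$ and $\psi$ unchanged — precisely the $\Bun_{\mG'}$-action on splittings of $\sG_\psi$. The displayed identification, being compatible with isomorphisms, identifies the objects over $E$ and the $\underline{\on{Isom}}$-sheaves between them, giving full faithfulness; and since every object of $\sG_\psi$ has underlying torsor $\calE=F_*E$ for some $\mG$-torsor $E$, it gives essential surjectivity. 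I expect the main obstacle to be the middle paragraph: pinning down the gauge law $\nabla\mapsto\nabla+d\log(g)$ with the correct variance and verifying that $\Psi$ genuinely descends to $h_p$ on the associated bundle, since this is exactly where the geometry of connections is converted into the sheaf-theoretic lifting data defining $\sG_\psi$.
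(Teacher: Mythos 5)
Your argument is correct, and it is essentially the proof the paper delegates to \cite[\S 4, Proposition 4.2]{OV}: split the four-term sequence at $\calK=\ker(h_p)$, realize $\sG_\psi$ as the gerbe of liftings of the $\calK$-torsor $h_p^{-1}(\psi)$ along $F_*\mG\to\calK$, and match flat connections of $p$-curvature $\psi$ with such lifting data via the local identification of connections on a trivialized torsor with $(\Lie\mG\otimes\Omega_{X/S})^{cl}$ and the gauge law $d\log$. The only point needing care is the one you flag — the sign/variance in $\nabla\mapsto\nabla\pm d\log(g)$, which decides whether one gets $\underline{\on{Isom}}_\calK(\calE\times^{F_*\mG}\calK,\calT_\psi)$ or its inverse torsor — and this does not affect the validity of the argument.
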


The proof is similar to \cite[\S 4, Proposition 4.2]{OV}. One only needs to replace the invertible sheaf $L$ associated to the $F_{Y/S*}(\mO_Y^*)$-torsor $\mL$ in \emph{loc. cit.} by the structure sheaf $\mO_E$ of the $F_*\mG$-torsor $E$ in the current setting. 

\subsection{}\label{tangent map} 
In this subsection we assume that $S=\Spec k$, where $k$ algebraically closed of characteristic $p>0$. We describe the tangent map of $h_p:\Loc_\mG\ra\Gamma(X',\Lie\mG'\otimes\Omega_{X'/k})$ at the trivial $\mG$-torsor with the canonical connection $x=(E^0,\nabla^0)$. First, the tangent space of $\Loc_\mG$ at $x$ is given by hypercohomology $\mathbb H^1$ of 
the following deRham complex
\[\Omega_{X/k}^\bullet(\Lie\mG):=\{0\ra\Lie\mG\stackrel{\nabla^0}{\ra}\Lie\mG\otimes\Omega_{X/k}^1\ra\Lie\mG\otimes\Omega_{X/k}^2\to\cdots\}.\] 
%%%%%%%%%%%%%%%%%%%
\quash{and the multiplicative $\Lie\mG$-value de Rham complex
\[\Omega_{X/S}^\bullet(\mG):=0\ra\mG\stackrel{dlog}\ra\Lie\mG\otimes\Omega^1_{X/S}\ra\Lie\mG\otimes\Omega_{X/S}^2\ra\cdot\cdot\cdot\]
where $dlog:\mG\ra\Lie\mG\otimes\Omega^1_{X/S}$ is the map in \ref{conn on triv}. 

By a similar argument in \cite[Section 4]{MM} using $\Omega_{X/S}^\bullet(\Lie\mG)$ and $\Omega_{X/S}^\bullet(\mG)$
one can show there is an isomorphism 
\[\Lie(\Loc_\mG)\is\mathbb H^1(X,\Omega_{X/S}^\bullet(\Lie\mG)).\]
The conjugate filtration on the de Rham complex $\Omega_{X/S}^\bullet$} 
%%%%%%%%%%%%%%%%%%%%%
Recall that there is the ``second spectral sequence"
with $E^i=\mathbb H^i(X,\Omega_{X/k}^\bullet(\Lie\mG))$
and 
$E_2^{i,j}=H^i(X,\mathcal H^j(\Omega_{X/k}^\bullet(\Lie\mG)))$. When $\mG=\bG_m$, this is also known as the conjugate spectral sequence.
In particular
\begin{equation}\label{appen:conjE2}
E_2^{0,1}=\Gamma(X,\mathcal H^1(\Omega_{X/k}^\bullet(\Lie\mG))\simeq \Lie\mG'\otimes\Omega_{X'/k}.
\end{equation}
Here the last isomorphism is obtained by the Lie algebra version of the exact sequence as in Proposition \ref{appen:4-term}. Note that if $\mG=\bG_m$ or $\bG_a$, it is just the usual Cartier isomorphism. The edge morphism of the spectral sequence induces
\[
c:T_x\Loc_\mG\is\mathbb H^1(X,\Omega_{X/k}^\bullet(\Lie\mG))\ra H^0(X',\Lie\mG'\otimes\Omega_{X'/k}^1).
\]

\begin{lemma}\label{appen:tangent}
The map $c$ is equal to  $-dh_p$.
\end{lemma}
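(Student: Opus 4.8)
The plan is to compute both $c$ and $dh_p$ on an explicit \v{C}ech hypercocycle representing a tangent vector and compare the two outputs term by term. Since $\mG=F^*\mG'$ carries its canonical connection, the complex $\Omega^\bullet_{X/k}(\Lie\mG)$ is the de Rham complex of the Frobenius pullback $\Lie\mG=F^*\Lie\mG'$ with the Cartier connection $\nabla^0$, so its cohomology sheaves are governed by the (coefficient) Cartier isomorphism and \eqref{appen:conjE2} holds. Fix an affine open cover $\{U_i\}$ of $X$. By the identification $T_x\Loc_\mG\is H^1(X,\Omega^\bullet_{X/k}(\Lie\mG))$, a tangent vector is represented by a hypercocycle $(\{g_{ij}\},\{\omega_i\})$ with $g_{ij}\in\Gamma(U_{ij},\Lie\mG)$ a \v{C}ech $1$-cocycle and $\omega_i\in\Gamma(U_i,\Lie\mG\otimes\Omega^1_{X/k})$ satisfying $\nabla^0\omega_i=0$ and $\omega_j-\omega_i=\nabla^0 g_{ij}$. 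Geometrically this is the first-order deformation of $(E^0,\nabla^0)$ whose transition functions are $1+\epsilon g_{ij}$ and whose connection on $U_i$ is $\nabla^0+\epsilon\omega_i$.

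First I would compute $dh_p$. The $p$-curvature is local and the deformed torsor is trivial on each $U_i$, so $dh_p$ of the class is the global section of $\Lie\mG'\otimes\Omega^1_{X'/k}$ whose restriction to $U'_i$ is $\tfrac{d}{d\epsilon}\big|_{0}\Psi(\nabla^0+\epsilon\omega_i)$. Running the computation behind \eqref{explicit hp} (equivalently, the Jacobson identity applied to $(\nabla^0_\xi+\epsilon(\omega_i,\xi)z)^p$ as in the derivation of that formula), the contribution of the Frobenius/$p$-power term $z^p\otimes\pi^*_{X/k}(\omega)$ is of order $\epsilon^p$ and hence dies to first order, while the remaining term yields $-\epsilon\,(\id\otimes\calC_{X/k})(\omega_i)$. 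Thus $dh_p$ sends the hypercocycle to the global section locally equal to $-\calC_{X/k}(\omega_i)$, which is well defined precisely because $\omega_j-\omega_i$ is $\nabla^0$-exact and $\calC_{X/k}$ annihilates $\nabla^0$-exact closed forms.

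Next I would compute the edge map $c$ of the second spectral sequence. By construction, $c$ sends $(\{g_{ij}\},\{\omega_i\})$ to the global section of $\mathcal{H}^1(\Omega^\bullet_{X/k}(\Lie\mG))$ whose restriction to $U_i$ is the class $[\omega_i]$; these glue since $[\omega_j]-[\omega_i]=[\nabla^0 g_{ij}]=0$ in $\mathcal{H}^1$. Under the identification \eqref{appen:conjE2}, which is the honest coefficient Cartier isomorphism $\mathcal{H}^1\is\Lie\mG'\otimes\Omega^1_{X'/k}$, the class $[\omega_i]$ is sent to $\calC_{X/k}(\omega_i)$. Comparing with the previous paragraph gives $c=-dh_p$ on the level of representatives, hence as maps.

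The main obstacle is the bookkeeping of this single sign, and in particular keeping straight two a priori different incarnations of the Cartier operator: the identification \eqref{appen:conjE2} is induced by the genuine additive Cartier isomorphism $+\calC_{X/k}$ (the additive analogue of the four-term sequence of Proposition \ref{appen:4-term}, in which the nonlinear term $z^p\otimes\pi^*_{X/k}(\omega)$ disappears), whereas the linearization of the multiplicative map $h_p$ of \eqref{explicit hp} carries the opposite sign $-\calC_{X/k}$; it is exactly this discrepancy that produces the minus sign in the statement. The only genuinely nonformal input is the vanishing of the Frobenius term to first order at the base point $x$, where $\Psi(\nabla^0)=0$, so that $dh_p$ is controlled entirely by the linear Cartier term.
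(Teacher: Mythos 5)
Your argument is correct and is essentially the paper's own proof: both reduce to the trivial torsor via a local trivialization and there identify $dh_p$ with $-\calC_{X/k}$ by linearizing \eqref{explicit hp} (the $p$-power term dying to first order since $\Psi(\nabla^0)=0$), and the edge map $c$ with $+\calC_{X/k}$ via the Cartier identification \eqref{appen:conjE2}. The only difference is packaging: you glue explicit \v{C}ech hypercocycles over a Zariski cover, whereas the paper first records $-d(h_p\circ i)=c\circ di=\calC_{X/k}$ on connections on the trivial torsor and then trivializes $E$ over an \'etale cover, using that the section $dh_p(v)$ is determined by its pullback there.
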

\begin{proof}
We follow the argument in \cite[Lemma 4.12]{OV}. Recall that we have a natural inclusion $i:H^0(X',\Lie\mG'\otimes F_*\mathcal Z_{X/k}^1)\ra\Loc_\mG$ of the substack of flat connections on the 
trivial torsor $E^0$. It follows by definition that
\[-d(h_p\circ i)=c\circ di:
H^0(X,\Lie\mG'\otimes F_*\mathcal Z_{X/k}^1)\to H^0(X',\Lie\mG'\otimes\Omega_{X'/k}^1).\]
More explicitly, they are equal to the Cartier map $\mathcal C_{X/S}$ as introduced in
the proof of Proposition \ref{appen:4-term}. Now
let $v\in\mathbb H^1(X,\Omega_{X/k}^\bullet(\Lie\mG))$
and let $(E,\nabla)$ be the corresponding $\mG$-torsor with connection on $X[\epsilon]$.
The section $dh_p(v)\in H^0(X',\Lie\mG'\otimes\Omega_{X'/k}^1)$ is determined by its pull back to
any \'etale cover of $X'$. 
We can choose an \'etale cover on which $E$ is trivial, hence reduced to the case   
when $E$ is trivial and $v\in H^0(X,\Lie\mG'\otimes F_*\mathcal Z_{X/k}^1)$.
\end{proof}
\quash{\begin{corollary}
If $X\ra S$ has relative dimension less or equal to one, then $h_p:\Loc_\mG\ra H^0(X',\Lie\mG'\otimes\Omega_{X'/S}^1)$
is a smooth map.
\end{corollary}
\begin{proof} 
By assumption we have $E_2^{2,0}=H^2(X',\Lie\mG')=0$. Thus above lemma 
implies $dh_p$ is surjective, and it follows that $h_p$ is smooth.
\end{proof}}
%%%%%%%%%%%%%%%%%%%%
%%%%%%%%%%%%%%%%%%%%%%%

%%%%%%%%%%%%%%%%%%%%%

%%%%%%%%%%%%%%%%%%%%


\begin{thebibliography}{99}


\bibitem[BNR]{BNR}A. Beauville, M.S. Narasimhan, S. Ramanan: {\it Spectral curves and the generalised theta divisor}, J. Reine Angew. Math. (1989)
Volume: 398, page 169-179.



\bibitem[BD]{BD} A. Beilinson, V. Drinfeld: {\it Quantization of Hitchin's integrable
system and Hecke eigensheaves}, preprint, available at
http://www.math.uchicago.edu/~mitya/langlands/.

\bibitem[BB]{BB} R. Bezrukavnikov. A. Braverman.: {\it Geometric Langlands
conjecture in characteristic p: The $GL_n$ case },
Pure Appl. Math. Q. 3 (2007), no. 1, Special Issue: In honor of Robert D. MacPherson. Part 3, 153-179.

\bibitem[B]{B} J.B. Bost: {\it Algebraic leaves of algebraic foliations over number fields},
Publ. Math. Inst. Hautes \'Etud. Sci. No. 93 (2001), 161-221.


\bibitem[CGP]{CGP} B. Conrad, O. Gabber, G. Prasad: {\it Pseudo-reductive groups}, 
New mathematical momographs: 17.


\bibitem[CZ]{CZ}T.H. Chen, X. Zhu: {\it Geometric Langlands in prime characteristic}, arXiv:1403.3981.

\bibitem[DG]{DG} R. Donagi, D. Gaitsgory: {\it The gerbe of Higgs bundles},
Transformation groups Volume 7, Number 2, 109-153

\bibitem[G]{G}  M. Groechenig: {\it Moduli of flat connections in positive characteristic},
arXiv:1201.0741.

\bibitem[I]{I}L. Illusie: {\it Complexe de deRham-Witt et cohomologie cristalline},
Ann. Sci. \'Ecole Norm. Sup. (4) 12 (1979), no. 4, 501--661. 

\bibitem[K]{K}N. Katz: {\it Nilpotent connections and the Monodromy Theorem},
Publ. Math. Inst. Hautes \'Etud. Sci. No. 39 (1970), 175-232.


\bibitem[LP]{LP}Y. Laszlo, C. Pauly: {\it On the Hitchin morphism in
positive characteristic}, Internat. Math. Res. Notices 2001, no. 3,
129--143.

\quash{\bibitem[M]{M} S. Mochizuki: {\it A theory of ordinary p-adic curves}, 
Publ. RIMS Kyoto Univ.  32 (1996), 957--1151.}

\bibitem[MM]{MM} B. Mazur, W. Messing: {\it Universal extensions and one dimensional crystalline cohomology}, Lecture Notes in Mathematics 370.


\bibitem[N1]{N1} B.C. Ng$\hat{\on o}$: {\it Hitchin fibration and endoscopy },
Invent. Math. 164 (2006) 399-453.

\bibitem[N2]{N2} B.C. Ng$\hat{\on o}$: {\it Le lemme fondamental pour les alg\`ebres
de Lie}, Publ. Math. Inst. Hautes \'Etud. Sci. No. 111 (2010), 1-169.

\bibitem[O]{O}A. Ogus: {\it Higgs cohomology, $p$-curvature, and the Cartier isomorphism}, Compositio Math. 140 (2004) 145--164.

\bibitem[OV]{OV}A. Ogus, V. Vologodsky: {\it Nonabelian Hodge theory in
characteristic p}, Publ. Math. Inst. Hautes \'Etud. Sci. No. 106
(2007), pp.1--138

\bibitem[S]{S}C. Simpson: {\it Higgs bundles and local systems}, 
Publ. Math. Inst. Hautes \'Etud. Sci. No. 75
(1992), 5--95.
\end{thebibliography}
\end{document}